\numberwithin{equation}{section}
\newtheorem{Theorem}{Theorem}[section] \newtheorem{Corollary}[Theorem]{Corollary} \newtheorem{Lemma}[Theorem]{Lemma} \newtheorem{Proposition}[Theorem]{Proposition} \theoremstyle{definition}
 \newtheorem{Definition}[Theorem]{Definition} \newtheorem{Example}[Theorem]{Example}  \newtheorem{Remark}[Theorem]{Remark}  \newtheorem{Question}[Theorem]{Question}   \newtheorem{conj}[Theorem]{Conjecture} \numberwithin{equation}{section}
\DeclareMathOperator{\Hom}{Hom} 
\DeclareMathOperator{\Image}{Im}
\DeclareMathOperator{\coker}{Coker}
\DeclareMathOperator{\Ker}{Ker}   \DeclareMathOperator{\Ann}{Ann}
 \DeclareMathOperator{\Ass}{Ass}
\DeclareMathOperator{\Proj}{Proj} \DeclareMathOperator{\beg}{indeg}
\DeclareMathOperator{\Deg}{deg} 
 \DeclareMathOperator{\Ext}{Ext}\DeclareMathOperator{\Dim}{dim}
\DeclareMathOperator{\depth}{depth}
 \DeclareMathOperator{\Ht}{ht}
\DeclareMathOperator{\Reg}{reg} \DeclareMathOperator{\Tot}{Tot}
\DeclareMathOperator{\End}{end} \DeclareMathOperator{\END}{end}
 \DeclareMathOperator{\A}{\alpha}
 \DeclareMathOperator{\E}{\textrm{E}}
 \DeclareMathOperator{\SD}{\textrm{SD}}
\DeclareMathOperator{\SDC}{\textrm{SDC}} 
\newcommand{\binomial}[2]{{#1 \choose #2}}
\newcommand{\td}{\tilde}
\newcommand{\fm}{\mathfrak{m}}
\newcommand{\fp}{\frak{p}}
\newcommand{\fq}{\frak{q}}
\newcommand{\fa}{\frak{a}}
\newcommand{\ra}{\rightarrow}
\def \y {\otimes}
\def \Z {\mathbb{Z}}
\def \N {\mathbb{N}}
 \def\ff{{\bf f}}\def\aa{{\bf a}}
\def\ag{{\bm \gamma}}
\newcommand{\llar}{-\kern-5pt-\kern-5pt\longrightarrow}
\def\restr{{\kern-1pt\restriction\kern-1pt}}
\begin{document}
\title{ Residual Intersections and the annihilator of Koszul homologies }
\author[S.H.Hassanzadeh]{S. H. Hassanzadeh$^1$}
\author[J. Na\'{e}liton]{J. Na\'{e}liton$^2$}
\address{$^{1}$  Instituto de Matem\'{a}tica,
Universidade Federal do Rio de Janeiro,  Brazil. }
\email{hamid@im.ufrj.br and  hassanzadeh.ufrj@gmail.com}

\address{$^{2}$  Instituto de Matem\'{a}tica Pura e Aplicada, IMPA,  Rio de Janeiro,  Brazil. }
 \email{jnaeliton@yahoo.com.br}

\date{\today}
\maketitle
\footnotetext{Mathematics Subject Classification 2010
 (MSC2010). Primary  13C40, 13H10, 13D02,
Secondary 14C17, 14M06.}
\footnotetext{Keywords: Residual Intersection, Canonical Module, Type, Sliding depth, Approximation Complex, Koszul annihilator}
\footnotetext[1]{Corresponding author, Partially
supported by a CNPq grant.}
\footnotetext[2]{Under a CNPq Doctoral scholarship.}

\begin{abstract} Cohen-Macaulayness, unmixedness, the structure of the canonical module and the stability of the Hilbert function of algebraic residual intersections are studied in this paper.   Some  conjectures about these properties  are established for  large classes of residual intersections without restricting  local number of generators of the ideals involved.  A family of approximation complexes for residual intersections is constructed to determine the above properties. Moreover some  general properties of the symmetric powers of quotient ideals are determined  which were not known even for special ideals with  a small number of generators. Acyclicity of a prime case of these complexes  is shown to be  equivalent to  find a common annihilator for higher Koszul homologies. So that, a tight relation between  residual intersections and the uniform annihilator of positive Koszul homologies is unveiled  that sheds some light on their structure.

\end{abstract}
\section{Introduction}
Understanding the residual of the intersection of two algebraic varieties is an intriguing concept in algebraic geometry. Similar to many other notions in intersection theory,  the concept of residual intersections need the proper interpretation to encompass the desired algebraic and geometric properties. In the sense of  Artin and Nagata \cite{AN},  which is our point of view, the notion is  a vast generalization of linkage (or liaison) which  is more ubiquitous, but also harder to understand. Let $X$ and $Y$ be  irreducible
closed subschemes of a Noetherian scheme $Z$ with ${\rm codim}_Z(X)\leq {\rm codim}_Z(Y)=s$ and $Y \not \subset X$,
 Then $Y$ is called a residual intersection of $X$ if the number of equations needed to define $X\cup Y$ as a subscheme
of $Z$ is the smallest possible that is  $s$. Precisely,
if $R$ is a  Noetherian  ring,  $I$  an  ideal of
height $g$  and $s \geq g$  an integer, then
\begin{itemize}
\item{An {\it (algebraic) $s$-residual intersection} of $I$ is a proper ideal $J$ of $R$ such that $\Ht(J)\geq s$ and $J=(\fa:_R I)$ for some ideal $\fa\subset I$  generated by $s$ elements.}
\item{A {\it geometric $s$-residual intersection} of $I$ is an algebraic $s$-residual intersection $J$ of $I$ such that $\Ht(I+J)\geq s+1.$}
\end{itemize}

Based on a construction of Laksov for residual intersection, Fulton \cite[Definition 9.2.2]{Fu} presents a formulation  for residual intersection that, locally, can be expressed as follows:
Suppose that $X=Spec(R)$ and $Y$ and $S$ are  closed subschemes of $X$ defined by the ideals $\fa$ and $I$ respectively. Let $\widetilde{X}=\Proj(\mathcal{R}_R(I))$  be the blow-up of $X$ along $S$.
Consider the natural map $\pi: \widetilde{X}\to X$. Let $\widetilde{Y}=\pi^{-1}(Y)$ and  $\widetilde{S}=\pi^{-1}(S)$. Then $\widetilde{Y}=\Proj(\mathcal{R}_R(I)/\fa\mathcal{R}_R(I))$ and
$\widetilde{S}=\Proj(\mathcal{R}_R(I)/I\mathcal{R}_R(I))$ are closed subschemes of $\widetilde{X}$ with ideal sheaves $\mathcal{I}_{\widetilde{Y}}$ and $\mathcal{I}_{\widetilde{S}}$ .  $\mathcal{I}_{\widetilde{S}}$ is an invertible sheaf. Let $Z'\subseteq \widetilde{X}$ be the closed subscheme defined by the ideal sheaf $\mathcal{I}_{Z'}=\mathcal{I}_{\widetilde{Y}}\cdot \mathcal{I}_{\widetilde{S}}^{-1}$. $Z'$ is called the {\it residual scheme to $\widetilde{S}$ in $\widetilde{Y}$}. Preciesly $Z'=\Proj(\mathcal{R}_R(I)/\gamma\mathcal{R}_R(I))$  in which $\gamma=\fa\subseteq (\mathcal{R}_R(I))_{[1]}$.
Finally, the  residual intersection to $S$ in $Y$ \cite[9.2.2]{Fu} is the direct image of $\mathcal{O}_{Z'}$ i.e. $\pi_{\ast}(\mathcal{O}_{Z'})$. Since $\mathcal{O}_{Z'}$ is a coherent sheaf, by \cite[III, Proposition 8.5]{H}, $$\pi_{\ast}(\mathcal{O}_{Z'})=\widetilde{H^0(\widetilde{X},\mathcal{O}_{Z'})}=\widetilde{\Gamma(\widetilde{X},\mathcal{O}_{Z'})}.$$
Note that $\Gamma(\widetilde{X},\mathcal{O}_{Z'})=\Gamma_{\ast}(\widetilde{X},\mathcal{O}_{Z'}(0))$ which is equal to the ideal transform
$D_{\mathcal{R}_R(I)_+}(\mathcal{R}_R(I)/\gamma\mathcal{R}_R(I))_{[0]}, $ by an application of \u{C}ech complex. The later is closely related, and in many cases determined, by the ideal
 $$H^0_{\mathcal{R}_R(I)_+}(\mathcal{R}_R(I)/\gamma\mathcal{R}_R(I))_{[0]}=\bigcup (\fa I^i:_RI^{i+1}).$$

  In \cite{Ha} and  in the current work, we  consider the symmetric algebra $Sym_R(I)$ instead of the Rees algebra $\mathcal{R}_R(I)$. This generalization  has already proved its usefulness in studying multiple-point formulas by Kleiman c.f. \cite[17.6.2]{Fu}. We then find a kind of {\it Arithmetical} residual intersection to be $\bigcup (\gamma Sym_R^i(I):_RSym_R^{i+1}(I))$ where $\gamma=\fa\subseteq Sym_R^i(I)_{[1]}$.  Comparing the above three definitions for residual intersection,  we have
$$J=(\fa:I)\subseteq  \bigcup (\gamma Sym_R^i(I):_RSym_R^{i+1}(I)) \subseteq  \bigcup (\fa I^i:_RI^{i+1}).$$

Interestingly, these ideals coincide if the algebraic residual intersection $J$  does not share any associated primes with $I$, e.g if $J$ is unmixed and the residual is geometric.

Determining the cases where the first inclusion above is an equality leads us to define a third variation of algebraic residual intersection.
\begin{Definition}\label{darithmetic} An {\it arithmetic $s$-residual intersection}  $J=(\fa:I)$ is an algebraic $s$-residual intersection such that $\mu_{R_{\fp}}((I/\fa)_{\fp})\leq 1$ for all prime ideal $\fp \supseteq (I+J)$ with $\Ht(\fp)\leq s$. ( $\mu$ denotes the minimum number of generators.)
\end{Definition}

Clearly any geometric $s$-residual intersection is arithmetic. Moreover for any
algebraic $s$-residual intersection $J=(\fa:(f_1,\cdots,f_r))$ which is not geometric, all of the colon ideals $(\fa:f_i)$ are arithmetic $s$-residual
intersection and at least one of them is not geometric.

In this paper  we introduce a family  of complexes, denoted by $\{_i\mathcal{Z}^{+}_{\bullet}\}_{i=0}^{\infty}$, to approximate $i$th symmetric power of $I/\fa$, $Sym_R^i(I/\fa)$ for $i>1$. The idea to define this family is inspired by \cite{Ha} in which the single complex  $_0\mathcal{Z}^{+}_{\bullet}$ is treated.  $H_0(_0\mathcal{Z}^{+}_{\bullet})$  is a cyclic module of the form $R/K$ where $K$ is called  the {\it Disguised $s$-residual intersection of $I$ w.r.t. $\fa$}, see Definition \ref{DDisguised}.  The study of the other members of the above family of complexes shed some  more light on the structure of residual intersections.  A flavor of  our main results in  Section \ref{SecZ+} is the following,

 {\it Let $(R,\fm)$ be a  CM local ring of dimension $d$ and $I$ be an ideal with $\Ht(I)=g >0$. Let $s\geq g$, $1\leq k\leq s-g+2$ and $J=(\fa:I)$  be any (algebraic) $s$-residual intersection. Suppose that $I$ is Strongly Cohen-Macaulay . Then
\begin{enumerate}[(1)]
\item{\text{Corollary \ref{ccanonical},} the canonical module of $R/J$ is $Sym_R^{s-g+1}(I/\fa)$, provided the residual is arithmetic and $R$ is Gorenstein;}
\item{\text{Corollary \ref{cunmixed}}, $\depth(R/\fa)=d-s$; }
\item{ \text{Corollary \ref{cunmixed}}, $J$ is  unmixed  of codimension $s$;}
\item{ \text{Theorem \ref{t2} and Proposition \ref{phil}}, $_k\mathcal{Z}^{+}_{\bullet}$ is acyclic,
$H_0(_k\mathcal{Z}^{+}_{\bullet})=Sym_R^{k}(I/\fa)$ and the latter is  CM  of dimension $d-s$;
 (the acyclicity of  $_k\mathcal{Z}^{+}_{\bullet}$ implies conjecture(5) below  in the arithmetic case.)}

\end{enumerate}
}

These results address the following (implicit) conjectures made during the development of the theory of algebraic residual intersections.

{\it \label{conj}Let $R$ be a Cohen-Macaulay(CM) local ring and  $I$ be  Strongly Cohen-Macaulay (SCM) or even just satisfies Sliding depth (SD). Then
\begin{enumerate}
\item{$R/J$ is  Cohen-Macaulay.}
\item{The canonical module of $R/J$ is the $(s-g+1)$th symmetric power of $I/\fa$, if $R$ is Gorenstein.}
\item{$\fa$ is minimally generated by $s$ elements.}
\item{ $J$ is unmixed.}
\item{The Hilbert series of $R/J$ depends only on $I$ and the degrees of the generators of $\fa$.}
\end{enumerate}
}
The first conjecture essentially goes back to \cite{AN}, it has been asked as an open question in \cite{HU}. However in the  Sundance conference in 1990 \cite{U1}, Ulrich  mentioned the above conjectures $(1)-(4)$ as desirable facts to be  proved.
The  property of the Hilbert function is rather recent and has been analyzed by Chardin, Eisenbud and Ulrich in \cite{CEU}.

It should be mentioned that these conjectures are proved if one supposes  in addition that the ideal $I$   has locally few number of generators, a condition which is called $G_s$, or if it has a deformation with $G_s$ property. Over time the $G_s$ condition became a 'standard' assumption in the theory of residual intersection which is not avoidable in some cases. However, the desire is to prove the above assertions without restricting the  local number of generators of $I$.

In comparison, obtaining the structure of the canonical module in the
absence of the $G_s$ condition  is more challenging. To achieve this, we show that under the above mentioned hypotheses, $Sym_R^{s-g+1}(I/\fa)$ is a faithful maximal Cohen-Macaulay
 $R/J$-module of type $1$. Concerning the type of modules, we prove even more. We show in Theorem \ref{ttype} that the following  inequality holds for any
 $1\leq k\leq s-g+1$, $$r_R(Sym_R^{k}(I/\fa))\leq \binom{r+s-g-k}{r-1}r_R(R),$$
where for a finitely generated $R$-module $M$, $r_R(M):=\Dim_{R/\fm}\Ext^{\depth(M)}_R(R/\fm,M)$ is the Cohen-Macaulay type of  $M$.

 In section \ref{App}, we present several applications of the theorems and constructions so far. We state how much the Hilbert functions of $R/J$ and $R/\fa$ depend on the generators and/or degrees of $I$ and $\fa$.  If $I$ satisfies SCM, then the Hilbert function of  the disguised residual intersection,  and that of $Sym_R^k(I/\fa)$, if $1\leq k\leq s-g+2$, depends only on the \textbf{degrees} of the generators of $\fa$ and the Koszul homologies of $I$.
In particular,  $k=1$ implies that  the Hilbert function of $R/\fa$ is constant on the open set of ideals $\fa$ generated by $s$
forms of the given degrees such that $\Ht(\fa : I)\geq s$. This is comparable with results in \cite{CEU} where the same assertion is concluded under some $G_s$ hypotheses.

The graded structure of $_k\mathcal{Z}^{+}_{\bullet}$ shows that
if $I$ satisfies  the SD$_1$ condition, then
 \begin{center}
 $\Reg (Sym_R^k(I/\fa)) \leq \Reg( R) + \Dim(R_0)+\sigma( \fa)-(s-g+1-k)\beg(I/\fa)-s $.
\end{center}
for $k\geq 1$. These applications were no known even if  ideal $I$ satisfies the $G_s$ condition. Finally in this section, Proposition \ref{PGs}, by a combination
of older and newer facts, we show that for any
algebraic $s$-residual intersection $J=\fa:I$, if $I$ is SCM and evenly linked to a $G_s$ ideal (or has a deformation with these properties)  then the disguised residual intersection and the algebraic residual intersection coincide. Based on this fact, we conjecture that, Conjecture \ref{Conj1},  {\it in the presence of the sliding depth condition the disguised residual intersection is the same as the algebraic residual intersection}. 

In section \ref{Arith}, we try to understand  better the structure of complex $_0\mathcal{Z}^{+}_{\bullet}$ in the case where $I/\fa$ is principal, say $I=(\fa,b)$. We find in Theorem \ref{thr2} that $H_i(_0\mathcal{Z}^{+}_{\bullet}(\aa,\ff))\simeq bH_{i}(a_1,\cdots,a_s)$ for all $i\geq 1$ and $H_0(_0\mathcal{Z}^{+}_{\bullet}(\aa,\ff))\simeq R/(\fa:b)$. This fact shows how much the homologies of $\mathcal{Z}^+_{\bullet}$ complexes may depend on the generating sets, as well it shows a tight relation between uniform annihilator of Koszul homologies and acyclicity of $\mathcal{Z}^+_{\bullet}$. As a byproduct, Corollary \ref{Ckosan} suggests that in the way of studying the properties of colon ideals, instead of assuming $\Ht((a_1,\cdots,a_s):I)\ge s$, one may only need to suppose that  $IH_i(a_1,\cdots,a_s)=0$ locally at codimension $s-1$.

Motivated by the facts in section \ref{Arith}, we  investigate  on  the uniform annihilator of positive  Koszul homologies in  Section \ref{sann}. Not much is known about the annihilator of Koszul homologies.
In Corollary \ref{creskosan}, we show that for a residual intersection $J=\fa:I$ where $I$ satisfies SD, and $\depth(R/I)\geq d-s$,   $$I\subseteq \bigcap_{\substack{j\geq 1}}\Ann(H_j(\fa)).$$

Surprisingly, this results contradicts one of the unpublished, but well-known results of G. Levin \cite[Theorem 5.26]{V2} which yielded  in  \cite{CHKV} that Supp$(H_1(\fa))$=Supp$(H_0(\fa))$. Simple examples of residual intersection disprove this last claim. Moreover in this section it is shown in Theorem \ref{tsd} that for an $s$-residual intersection $J=(\fa:I)$, if $I$ satisfies $\SD$ and $\depth (R/I)\geq d-s$,
 so does  $\fa$. This is an interesting result since for a long time it was  known that the residual intersections of the  ideal $\fa$ are Cohen-Macaulay although no one was aware of the $\SD$ property of $\fa$. 
\section{Residual Approximation Complexes}\label{SecZ+}
In this section we introduce a family of complexes which approximate the residual intersection and some of its related symmetric powers. We denote this family by $\{_i\mathcal{Z}^{+}_{\bullet}\}_{i=0}^{\infty}$. The complex $_0\mathcal{Z}^{+}_{\bullet}$ was  already defined in \cite{Ha} and used to prove the CM-ness of arithmetic  residual intersection of ideals with sliding depth.


Throughout this section, $R$ is a Noetherian ring of dimension
$d$, $I=(\ff)=(f_1,\cdots,f_r)$ is an ideal of grade $g\geq1$. Although by adding one variable we may as well treat the case $g=0$;
 but, for simplicity, we keep the assumption $g\geq1$.
$\fa=(a_1,\cdots,a_s)$ is an ideal contained in $I$, $s\geq g$, $
J=\fa:_{R}I$, and $S=R[T_1,\cdots,T_r]$ is a polynomial extension
of $R$ with indeterminates $T_i$'s. We denote the symmetric
algebra of $I$ over $R$ by $\mathcal{S}_I$ or in general the symmetric algebra of an $R$-module $M$ by $Sym_R(M)$ and the $k$th symmetric power of $M$ by $Sym_R^k(M)$.  We consider
$\mathcal{S}_I$ as an $S$-algebra via the ring homomorphism
$S\rightarrow \mathcal{S}_I$ sending $T_i$ to  $f_i$ as an
element of  $(\mathcal{S}_I)_1=I$ then $\mathcal{S}_{I}=S/\mathcal{L}$.
 Let $a_i= \sum _{j=1}^{r}c_{ji}f_j$,  $\gamma_i= \sum _{j=1}^{r}c_{ji}T_j$, $\bm{\gamma}=(\gamma_1,\cdots,\gamma_s)$ and
$\frak{g}:=(T_1,\cdots,T_r)$.

For a sequence of elements $\frak{x}$
in a commutative ring $A$ and an $A$-module $M$, we denote the
Koszul complex by $K_\bullet(\frak{x};M)$, its cycles by
$Z_i(\frak{x};M)$ and homologies by $H_i(\frak{x};M)$. For a graded
module $M$, $\beg(M):=\inf\{i : M_i\neq 0\}$ and
$\End(M):=\sup\{i : M_i\neq 0\}$. Setting $\deg(T_i)=1$ for all
$i$, $S$ is a  standard graded ring over $S_0=R$.

To set one more convention, when we draw the picture of a double
complex obtained from a tensor product of two finite complexes
(in the sense of \cite[2.7.1]{W}), say $\mathcal{A}\bigotimes
\mathcal{B}$; we always put $\mathcal{A}$ in the vertical
direction and $\mathcal{B}$ in the horizontal one. We also label
the module which is  in the up-right corner by $(0,0)$ and
consider the labels for the rest, as the points in the
third-quadrant.

\subsection{$_k\mathcal{Z}^{+}_{\bullet}$ complexes}

The first object  in the construction of the family of  $_k\mathcal{Z}^{+}_{\bullet}$ complexes is  one of the approximation complexes- the $\mathcal{Z}$-complex \cite{HSV}. We consider  the approximation complex  $\mathcal{Z_{\bullet}}(\ff)$: $$0\ra Z_{r-1}\y_R S(1-r)\ra\cdots\ra Z_{1}\y_R S(-1)\ra Z_{0}\y_R S\ra 0$$
 where $Z_i=Z_i(\ff)$ is the $i$th cycle of the Koszul complex $K_\bullet(\ff,R)$.

 The second  object is the Koszul complex
 $K_{\bullet}(\bm{\gamma},S)$:

$$
0\ra K_s(\gamma_1,...,\gamma_s)(-s)\ra \cdots \ra K_{1}(\gamma_1,...,\gamma_s)(-1)\ra K_0(\gamma_1,...,\gamma_s)\ra 0.$$
Let   $\mathcal{D_{\bullet}}={\rm Tot}(K_{\bullet}(\bm{\gamma},S)\y_S \mathcal{Z}_{\bullet}(\ff))$. Then

\begin{equation}\label{Di}\mathcal{D}_i=\bigoplus_{j=i-s}^{\min\{i,r-1\}}[Z_{j}\y_R S]^{\binomial{s}{i-j}}(-i).
 \end{equation}

For a graded $S$-module $M$ the $k$th graded component of $M$ is denoted by $M_{[k]}$. Let $(D_{\bullet})_{[k]}$ for ${k\in\Z}$ be the $k$th graded strand of $D_{\bullet}$. We have $(D_i)_{[k]}=0,$ for all $k<i$, in particular
\begin{equation}\label{kerD}
H_{k}((D_{\bullet})_{[k]})=\Ker(D_k \rightarrow D_{k-1})_{[k]}.
\end{equation}

Now, let $C_{\frak{g}}^{\bullet}=C^{\bullet}_{\frak{g}}(S)$be   the \v{C}ech complex of $S$ with respect to the sequence  $\frak{g}=(T_1,\cdots,T_{r})$, 

$$C_{\frak{g}}^{\bullet}:~~0\ra C^0_{\frak{g}}(=S)\ra C^1_{\frak{g}}\ra \cdots \ra C^r_{\frak{g}}\ra 0. $$

 We then consider the bi-complex $C^{\bullet}_{\frak{g}}\y_S D_{\bullet}$ with $C^{0}_{\frak{g}}\y_S D_{0}$ in  the corner. This bi-complex    gives rise  to  two spectral sequences for which the second terms of the horizonal spectral are

\begin{equation}\label{E2Hor}
^{2}\E _{hor}^{-i,-j}=H^{j}_{\frak{g}}(H_{i}(D_{\bullet})),
\end{equation}
 and  the first terms of the vertical spectral are
\begin{equation}\label{Ever1}\E_{ver}^{-i,-j}=\left \{
\begin{array}{ccccccc}
0\ra &H^{r}_{\frak{g}}(D_{r+s-1})\ra &\cdots\ra &H^{r}_{\frak{g}}(D_{1})\ra &H^{r}_{\frak{g}}(D_{0})\ra &0& \text{if~~} j=r \\
 0& \text{otherwise}
\end{array}
\right. \end{equation}

Since $H^{r}_{\frak{g}}(D_{i})=H^{r}_{\frak{g}}(\bigoplus_{j}[Z_{j}\y S](-i))=\bigoplus_{j}Z_{j}\y H^{r}_{\frak{g}}(S)(-i)$ and $\End (H^{r}_{\frak{g}}(S))=-r$, it follows that $\End(H^{r}_{\frak{g}}(D_{i}))=i-r$, thus $H^{r}_{\frak{g}}(D_{i})_{[i-r+j]}=0$, for all $j\geq 1$. It then leads to  define the following sequence of complexes indexed by  $k\geq 0$:

\begin{eqnarray}\label{complex1}
\xymatrix{0\ar[r]&H^{r}_{\frak{g}}(D_{r+s-1})_{[k]}\ar[r]&...\ar[r]&H^{r}_{\frak{g}}(D_{r+k+1})_{[k]}\ar[r]^{\phi_k}&H^{r}_{\frak{g}}(D_{r+k})_{[k]}\ar[r]&0}
\end{eqnarray}

Since the vertical spectral (\ref{Ever1}) collapses at the second step,
 the horizonal spectral converges to the homologies of $H^{r}_{\frak{g}}(D_{\bullet})$.
Since all of the homomorphisms are homogeneous of degree $0$,
the convergence inherits to any graded component.
 Therefore for any $k\geq 0$ there exists  a filtration $\cdots \subseteq \mathcal{F}_{2k}\subseteq \mathcal{F}_{1k}\subseteq \coker(\phi_k) $ such that
\begin{equation}\label{F1}
\frac{\coker(\phi_{k})}{\mathcal{F}_{1k}}\simeq(^{\infty}\E_{hor}^{-k,0})_{[k]}.
 \end{equation}
 Observing that $H^{-t}_{\frak{g}}(H_h(D_{\bullet}))=0$, for all $(t,h)\in \N \times \N_0$, one has $^{l}E_{hor}^{-k,0}\subseteq H^{0}_{\frak{g}}(H_k(D_{\bullet}))$, for all $l\geq 2$.  Hence we have the following chain of maps for which, except the isomorphism in the middle,  all  maps are canonical and  the map on the right is given by (\ref{kerD}):

\begin{eqnarray}\label{tau}
\xymatrix{ H^{r}_{\frak{g}}(D_{r+k})_{[k]}\ar[r]& \coker(\phi_k)\ar[r]&\frac{\coker(\phi_{k})}{\mathcal{F}_{1k}}\ar[lld]_{\simeq}&  \\ ^{\infty}(\E_{hor}^{-k,0})_{[k]}\ar[r]^{1-1} & H^{0}_{\frak{g}}(H_k(D))_{[k]}\ar[r]^{1-1} &H_k(D)_{[k]} \ar[r]^{1-1}& (D_k)_{[k]}.}
 \end{eqnarray}

We denote the composition of the above chain of $R$-homomorphisms by $\tau_k$.

Finally, we define the promised family of complexes as follows:
For any integer $k\ge 0$, $_k\mathcal{Z}_{\bullet}^{+}$ is a complex of length $s$ consists of two parts:
the right part  is $(D_{\bullet})_{[k]}$ and the left part is $(^{1}\E_{ver})_{[k]}$. These parts are joined via $\tau_k$. More precisely,
\begin{eqnarray}\label{kZ+}
\xymatrix{_k\mathcal{Z}_{\bullet}^{+}:&0\ar[r]&_k\mathcal{Z}_{s}^{+}\ar[r]&\cdots\ar^{\phi_k}[r]&_k\mathcal{Z}_{k+1}^{+}\ar[r]^{\tau_k}&_k\mathcal{Z}_{k}^{+}\ar[r]&\cdots\ar[r]&_k\mathcal{Z}_{0}^{+}\ar[r]&0}
\end{eqnarray}
wherein
\begin{equation}\label{z+}
_k\mathcal{Z}_i^{+}=\left \{
\begin{array}{cc}
(D_i)_{[k]}& i\leq \min \{k,s\}, \\
 H^{r}_{\frak{g}}(D_{i+r-1})_{[k]}& i>k.

\end{array}
\right. \end{equation}\\


The structure of $_k\mathcal{Z}^{+}_{\bullet}$ depends in two ways  on the generating sets. Namely,  it depends on the
 generating set of $I$, $\ff$, and the expression of the generators of $\fa$ in terms of the generators of $I$, $c_{ij}$.
However, for $k\geq 1$, we have  $$H_0(_k\mathcal{Z}^{+}_{\bullet})=H_0(D_{\bullet})_{[k]}=(\mathcal{S}_{I}/(\gamma)\mathcal{S}_{I})_{[k]}=Sym_R^k(I/\fa).$$
The case where $k=0$ is also very interesting. However the structure of $H_0(_0\mathcal{Z}^{+}_{\bullet})$ is not as clear as the cases where $k>0$.
\begin{Definition}\label{DDisguised} Let $R$ be a Noeherian ring and $\fa\subseteq I$ be two ideals of $R$.  The {\it Disguised $s$-residual intersection of $I$ w.r.t. $\fa$} is the unique ideal $K$ such that $H_0(_0\mathcal{Z}^{+}_{\bullet})=R/K$.  The reasons for choosing  the attribute {\it Disguised} is that, $K$ is contained in $J=(\fa:I)$, it has the same radical as $J$ has; and in the case where $R$ is CM, $J$ is an algebraic reisudal intersection  and $I$ satisfies some sliding depth condition,  $K$ is Cohen-Macaulay; moreover $K$ coincides with $J$ in the the case where the residual is arithmetic, by \cite[Theorem 2.11]{Ha}. We conjecture, \ref{Conj1}, that  under the above assumptions, $K$ is always the same as $J$ despite it does not appear so. 
\end{Definition}

\subsection{Acyclicity and Cohen-Macaulayness}
One more occasion where the properties of $_k\mathcal{Z}^{+}_{\bullet}$ are independent of the generating sets of $I$ and $\fa$ is  the following lemma, which is crucial to prove the acyclicity of $_k\mathcal{Z}^{+}_{\bullet}$.

\begin{Lemma} \label{l1}  Let $R$ be a Noetherian ring and $\fa\subseteq I$ be two ideals of $R$.  If $I=\fa$, then every complex
 $_k\mathcal{Z}^{+}_{\bullet}$ defined in (\ref{kZ+}) is exact.
\end{Lemma}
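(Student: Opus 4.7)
The plan is to exploit the algebraic fact that when $I=\fa$, the elements $\gamma_i = \sum_j c_{ji}T_j$ of $S$ map to $a_i$ in $\mathcal{S}_I = S/\mathcal{L}$, and since $a_1,\ldots,a_s$ generate $I=(\mathcal{S}_I)_{[1]}$ as an $R$-module, the ideal $(\bm{\gamma})\mathcal{S}_I$ coincides with the irrelevant ideal $(\mathcal{S}_I)_+$. Hence $\mathcal{S}_I/(\bm{\gamma})\mathcal{S}_I \cong R$ is concentrated in degree $0$, so the zeroth homology of $D_\bullet$ in the $k$-th graded strand is $\mathrm{Sym}^k_R(I/\fa)$, which vanishes for $k\geq 1$; this vanishing must then be propagated through the rest of $_k\mathcal{Z}^+_\bullet$.

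I would analyze the bicomplex $C^\bullet_{\frak{g}}\otimes_S D_\bullet$ whose spectral sequences (\ref{E2Hor}) and (\ref{Ever1}) are the very tools used to assemble $_k\mathcal{Z}^+_\bullet$. For $k\geq 1$, the right half $(D_\bullet)_{[k]}$ should be shown acyclic by returning to the bicomplex $K_\bullet(\bm{\gamma},S)\otimes_S \mathcal{Z}_\bullet(\ff)$ whose total complex is $D_\bullet$: taking vertical homology first yields the Koszul cycles $Z_j(\ff)\otimes_R S$, which restricted to a positive graded strand are killed by the horizontal Koszul differential on account of the identity $(\bm{\gamma})\mathcal{S}_I = (\mathcal{S}_I)_+$. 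For the left half $(^1\E_{ver})_{[k]}$, the explicit description $H^r_{\frak{g}}(D_i) = \bigoplus_j Z_j\otimes_R H^r_{\frak{g}}(S)(-i)^{\binom{s}{i-j}}$ together with the bound $\End H^r_{\frak{g}}(S) = -r$ enables the analogous (dual) computation through local cohomology. The map $\tau_k$ defined in (\ref{tau}) arises as an edge morphism of the horizontal spectral sequence and glues these two halves; the acyclicity of the total complex of $C^\bullet_{\frak{g}}\otimes_S D_\bullet$ in the $k$-th strand, combined with the piecewise vanishing above, forces $_k\mathcal{Z}^+_\bullet$ to be exact. For $k=0$ the only adjustment is that $H_0(D_\bullet)_{[0]}=R$ is nonzero but is captured exactly by the image of $\tau_0$, consistent with $J=\fa:I=R$ so that the disguised residual intersection equals $R$.

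The main obstacle will be the careful bookkeeping of graded strands and connecting morphisms across both spectral sequences. Specifically I must identify the page at which each spectral sequence stabilizes in the $k$-th strand, and verify that $\tau_k$ has the expected source and target through the chain (\ref{tau}) built from the filtration (\ref{F1}); once this identification is in place, the actual algebraic content of the proof reduces to the single identity $(\bm{\gamma})\mathcal{S}_I = (\mathcal{S}_I)_+$, which does all the real work of producing the required vanishing.
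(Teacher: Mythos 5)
Your key observation — that when $I=\fa$ the ideal $(\bm{\gamma})$ generates the irrelevant ideal of $\mathcal{S}_I$ modulo $\mathcal{L}$, i.e.\ $\mathcal{L}+(\bm{\gamma})=\mathcal{L}+\frak{g}$ — is exactly what the paper's proof hinges on, and your choice of tools (the two spectral sequences of $C^\bullet_{\frak{g}}\otimes_S D_\bullet$, the interpretation of $\tau_k$ as an edge map glued via the filtration (\ref{F1})) is also the right one. But the mechanism by which you propose to extract vanishing from this identity is not correct, and it skips the one step that does the real work.

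Concretely: it is false that ``taking vertical homology first yields the Koszul cycles $Z_j(\ff)\otimes_R S$.'' In the bicomplex $K_\bullet(\bm{\gamma},S)\otimes_S \mathcal{Z}_\bullet(\ff)$, taking homology in the Koszul direction gives $H_j(\bm{\gamma};\mathcal{Z}_i)$, not cycles; and the $\gamma_i$ need not form a regular sequence on $\mathcal{Z}_i$, so these Koszul homologies need not collapse. More importantly, the identity $(\bm{\gamma})\mathcal{S}_I=(\mathcal{S}_I)_+$ does \emph{not} directly force the graded strands $(D_\bullet)_{[k]}$ to be acyclic; a module annihilated by the irrelevant ideal can still live in any single degree, so ``killed by the horizontal Koszul differential'' is not a mechanism. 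What actually makes the argument run is the following chain, which your sketch omits: because both $\mathcal{Z}_\bullet(\ff)$ and $K_\bullet(\bm{\gamma})$ are DG-algebras, the homology $H_i(\mathcal{Z}_\bullet)$ is an $\mathcal{S}_I=S/\mathcal{L}$-module and the second page of the spectral sequence of $K_\bullet(\bm{\gamma})\otimes_S\mathcal{Z}_\bullet$ is annihilated by $\mathcal{L}+(\bm{\gamma})$; by convergence each $H_i(D_\bullet)$ is $\bigl(\mathcal{L}+(\bm{\gamma})\bigr)$-torsion; only now does $I=\fa$ enter, converting this into $\bigl(\mathcal{L}+\frak{g}\bigr)$-torsion, hence $\frak{g}$-torsion. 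This is what forces $H^j_{\frak{g}}(H_i(D_\bullet))$ to vanish for $j>0$ and equal $H_i(D_\bullet)$ for $j=0$, collapsing $^2\E_{hor}$ to a single row. Combined with the single-row vertical page (since $\frak{g}$ is regular on each $D_i$), this identifies $H_m(D_\bullet)$ with $H_{m+r}\bigl(H^r_{\frak{g}}(D_\bullet)\bigr)$ degree-by-degree, and then the degree constraints $\beg(D_i)=i$, $\End\bigl(H^r_{\frak{g}}(D_i)\bigr)=i-r$ kill everything away from the joint. At the joint, the collapse also gives $\mathcal{F}_{1k}=0$ in (\ref{F1}), so $\tau_k$ is literally the canonical map to $\coker(\phi_k)$ and exactness at positions $k$ and $k+1$ follows. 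Without the torsion argument via the DG-structure, the ``piecewise vanishing'' you invoke is unproven and the proposal has a genuine gap.
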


\begin{proof}

Since the approximation complex $\mathcal{Z}_{\bullet}(\ff)$ (resp. $K_{\bullet}(\bm{\gamma})$) is a differential graded algebra,
$H_i(\mathcal{Z}_{\bullet})$ (resp. $H_i(K_{\bullet}(\bm{\gamma}))$) is a $\mathcal{S}_{I}=S/\mathcal{L}$-module (resp. $S/(\bm{\gamma})$-module) for all $i$.
The bi-complex $K_{\bullet}(\bm{\gamma})\y_S \mathcal{Z}_{\bullet}$, gives rise to the horizontal spectral sequence with second terms
$^2\E^{-i,-j}_{hor}=H_j(K_{\bullet}(\bm{\gamma};H_i(\mathcal{Z}_{\bullet})))$. It follows that $(\mathcal{L}+(\bm{\gamma}))$
 annihilates $^2\E^{-i,-j}_{hor}$ and consequently annihilates $^{\infty}\E^{-i,-j}_{hor}$ which is a sub-quotient of $^2\E^{-i,-j}_{hor}$.
By the convergence of the spectral sequence to the homologies of the total complex $H_{\bullet}(D_{\bullet})$,
it is straightforward to deduce that $H_{i}(D_{\bullet})$ is $(\mathcal{L}+(\bm{\gamma}))$- torsion modules for all $i$,
i.e.  $(\mathcal{L}+(\bm{\gamma}))^{N}H_{i+j}(D_{\bullet})=0$ for all $i,j$ and for some $N$. Considering the equation $I=\fa$ in $(\mathcal{S}_{I})_1$, we have  $\mathcal{L}+(\bm{\gamma})=\mathcal{L}+\frak{g}$. Therefore
\\
\begin{equation}\label{Ehor}H^{j}_{\frak{g}}(H_i(D_{\bullet}))=H^{j}_{\frak{g}+\mathcal{L}}(H_i(D_{\bullet}))=H^{j}_{(\bm{\gamma})+\mathcal{L}}(H_i(D_{\bullet}))=\left \{
\begin{array}{cc}
H_i(D_{\bullet}),& j=0 \\
 0& j>0.
\end{array}
\right.
\end{equation}\\
Once more we study the spectral sequences arising from $C^{\bullet}_{\frak{g}}\y D_{\bullet}$ as it was already mentioned in (\ref{kZ+}). Applying (\ref{Ehor}), we draw both horizontal and vertical spectral sequence simultanously as follows:

 \begin{eqnarray*}
\xymatrix{
^{\infty}\E_{hor}:&0&\cdots&0&H_{k}(D_{\bullet})_{[k]}&\cdots&H_{0}(D_{\bullet})_{[k]}\\
^{1}\E_{ver}:     &0&\cdots& &0                       &\cdots&0\\
H^{r}_{\frak{g}}(D_{r+s-1})_{[k]}\ar[r]&...\ar[r]^{\phi_k}&H^{r}_{\frak{g}}(D_{r+k})_{[k]}\ar@{--}^{\tau_k}[uurr]\ar[r]&0&0&\cdots&0}
\end{eqnarray*}

Since $H_j(D_{\bullet})$ is a sub-quotient of $D_j$, $H_j(D_{\bullet})_{[k]}=0,$ for all $j>k$. Also by (\ref{Ehor})
there is only one non-zero row in the horizontal spectral. On the other hand $\frak{g}$ is a regular sequence on the  $D_j$'s which in turn shows that the vertical spectral is just one line.

 Consequently $H_{j}(_k\mathcal{Z}^{+}_{\bullet}):=H_{j+r-1}(H^r_{\frak{g}}(D_{\bullet}))_{[k]}=H_{j-1}(D_{\bullet})_{[k]}=0$ whenever $j>k+1$. On the other hand, if $j<k$, $H_j(_k\mathcal{Z}^{+}_{\bullet}):=H_j(D_{\bullet})_{[k]}=H_{j+r}(H^{r}_{\frak{g}}(D_{\bullet})_{[k]})$. However the latter is zero since  $\End(H^{r}_{\frak{g}}(D_{r+j})\leq j$. This shows that $_k\mathcal{Z}^{+}_{\bullet}$
 is exact on the left and also on the right hand of $\tau_k$.

  It  remains to prove the exactness in the joint points $k$ and $k+1$.
	For this, notice that,  $(^{\infty}\E_{hor}^{-i,-j})_{[k]}=0$ for $j\geq 1$ hence $\mathcal{F}_{jk}/\mathcal{F}_{(j+1)k}=(^{\infty}\E_{hor}^{-k-j,-j})_{[k]}=0$ for $j\geq 1$. Consequently, $\mathcal{F}_{1k}=0$ in (\ref{F1}). Therefore the map $\tau_k$ defined in (\ref{tau}) is exactly the canonical map $H^{r}_{\frak{g}}(D_{r+k})_{[k]}\xrightarrow{Can.} \coker(\phi_k)_{[k]}$ as required.

\end{proof}

As already mentioned in the introduction, some sliding depth conditions are needed to prove the acyclicity of the $_k\mathcal{Z}^{+}_{\bullet}$ complexes.
\begin{Definition}\label{dsd} Let $(R,\fm)$ be a Noetherian local ring of dimension $d$ and $I=(f_1,...,f_r)$ an ideal of grade $g\geq 1$. Let $k$ and $t$ be two integers, we say that the
ideal $I$   satisfies $ \SD_k$ at level $t$ if
$\depth(H_i(\ff;R))\geq \min\{d-g,d-r+i+k\}$ for all $i\geq r-g-t$ (whenever
$t=r-g$ we simply say that $I$ satisfies $\SD_k$, also $\SD$ stands for
 $\SD_0$).

$I$ is strongly Cohen-Macaulay, SCM, if $H_i(\ff;R)$ is CM for all $i$. Clearly SCM is equivalent to $\SD_{r-g}$.

 Similarly, we say that $I$ satisfies the
sliding depth condition on cycles, $\SDC_k$, at level $t$, if
$\depth(Z_i(\ff,R))\geq \min\{d-r+i+k, d-g+2, d\}$ for all $i\geq r-g-t$. Again if $t=r-g$,  we simply say that $I$ satisfies $\SDC_k$ and we use $\SDC$ instead of $\SDC_0$.
\end{Definition}


Some of the basic properties and relations between conditions $\SD_k$  and $\SDC_k$ are explained in the following proposition.
\begin{Proposition}\label{psdsdc} Let $(R,\fm)$ be a CM local ring of dimension $d$ and $I=(f_1,...,f_r)$  be an ideal of grade $g\geq 1$. Let $k$ and $t$ be two integers. Then
\begin{enumerate}
\item[(1)]{The properties $\SDC_k$ and $\SD_k$ at
level $t$ localizes; they only depend  on $I$ and not the generating set, if $t=r-g$.}
\item[(2)]{$\SD_k$ implies $\SDC_{k+1}$. }
\item[(3)]{$\SD_k$ at level $t<r-g$ implies $\SDC_{a}$ at level $t$ for $a\leq g+t+2-d$. }
\item[(4)]{ $\SDC_{k+1}$ at level $t$ implies $\SD_{k}$ at level $t$ for any $t$, if $g\geq 2$. This implication is also the case if $g=1$ and  $k=0$. }
\item[(5)]{ $\SD_0$ at level $t\geq 1$ implies $\SDC_{0}$ at level $ t$.}
\end{enumerate}
\end{Proposition}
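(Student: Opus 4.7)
The common engine for all five parts is depth chasing along the two standard short exact sequences attached to the Koszul complex $K_\bullet = K_\bullet(\ff; R)$,
$$0 \to Z_i \to K_i \to B_{i-1} \to 0, \qquad 0 \to B_i \to Z_i \to H_i \to 0,$$
together with the depth lemma. Because $R$ is CM of dimension $d$ and each $K_i$ is free, $\depth K_i = d$. Splicing the two sequences produces the forward recurrence
$$\depth Z_i \geq \min\{d,\ \depth Z_{i-1}+1,\ \depth H_{i-1}+2\}$$
and the backward recurrence
$$\depth H_i \geq \min\{\depth Z_i,\ \depth Z_{i+1}-2,\ d-1\}.$$
Everything else is arithmetic with these two recurrences, combined with the special identity $Z_r = (0:_R I) = 0$ which holds whenever $g \geq 1$.

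For (2), I would prove by increasing induction on $i \geq 0$ that $\depth Z_i \geq \min\{d-r+i+k+1, d-g+2, d\}$. The base case $i=0$ is immediate since $Z_0 = R$ has depth $d$. The inductive step substitutes the $\SD_k$ estimate $\depth H_{i-1} \geq \min\{d-g, d-r+i-1+k\}$ and the hypothesis on $\depth Z_{i-1}$ into the forward recurrence, and a short calculation produces exactly the desired bound, which is $\SDC_{k+1}$. Part (3) runs the same induction, but starting at $i=r-g-t$. Here the base case has no Koszul-homology input from below, so instead I would use the crude bound $\depth Z_i \geq d-r+i+1$, obtained by applying the forward recurrence repeatedly downward from $Z_r=0$ with no $H$ contribution. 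The condition $a\leq g+t+2-d$ is calibrated so that at $i=r-g-t$ the target $\min\{d-r+i+a, d-g+2, d\}$ is already at or below this crude bound, making the base case automatic.

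For (4), I would run the induction in the reverse direction using the backward recurrence, indexing $i$ downward. Under $\SDC_{k+1}$ one computes $\depth H_i \geq \min\{d-r+i+k, d-g, d-2\}$; the hypothesis $g\geq 2$ is precisely what makes $d-2 \geq d-g$, so the unwanted $d-2$ term is absorbed and one recovers the $\SD_k$ bound $\min\{d-g, d-r+i+k\}$. In the borderline case $g=1$, $k=0$, the generic calculation loses exactly one unit at $i=r-1$; this is salvaged by observing that $Z_r = 0$ forces $B_{r-1} \cong K_r$ to have full depth $d$, which through $0\to B_{r-1}\to Z_{r-1}\to H_{r-1}\to 0$ recovers the missing unit. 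For (5), the level $t \geq 1$ places one more Koszul homology within the controlled range than at level $0$, and together with the $Z_r=0$ input this extra Koszul piece is exactly what is needed to bootstrap $\SD_0$ into $\SDC_0$ at the same level.

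For (1), localization is immediate since forming Koszul complexes and taking cycles, boundaries and homologies all commute with localization, and the depth inequalities defining $\SD_k$ and $\SDC_k$ transfer accordingly in the CM setting. Independence of the generating set at full level $t=r-g$ is the classical observation that any two generating sets of $I$ are comparable by adding redundant generators, which amounts to tensoring the Koszul complex by $K_\bullet(0;R)$-factors; the resulting shifts in Koszul homology are harmless for the full-level inequalities. The main obstacle will be the bookkeeping in (3), (4), (5): the constraints $a \leq g+t+2-d$, $g \geq 2$, and $t \geq 1$ are each tuned to exactly one unit of depth, and one must track these units carefully, especially in the boundary cases handled by the $Z_r = 0$ trick.
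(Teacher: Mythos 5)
Your depth-chasing framework via the two short exact sequences $0\to Z_{i+1}\to K_{i+1}\to B_i\to 0$ and $0\to B_i\to Z_i\to H_i\to 0$ is exactly the engine the paper uses for parts (2), (4) and (5), and your treatment of (4) — including the $g=1,\ k=0$ borderline repaired through $Z_r=0$, i.e. $B_{r-1}\cong K_r$ free — matches the paper's argument. For (1) the paper simply cites Vasconcelos; your sketch is the standard one. For (5), the paper's phrasing is slightly sharper: rather than a heuristic about ``one more Koszul homology within the controlled range,'' it anchors the downward induction on the observation that $\SDC_k$ holds automatically for any $k$ at level $-1$ (since for $i\geq r-g$ the tail $0\to K_r\to\cdots\to K_{i+1}\to Z_i\to 0$ is a free resolution, giving $\depth Z_i\geq d-r+i+1\geq d-g+2$), and then applies $\depth Z_i\geq\min\{\depth H_i,\ d,\ \depth Z_{i+1}-1\}$; your proposal should be recast in those explicit terms to be airtight.

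The genuine discrepancy is in part (3), where the paper takes a different route: it analyzes the two spectral sequences obtained by tensoring the truncated Koszul complex $0\to Z_i\to K_i\to\cdots\to K_0\to 0$ with the \v{C}ech complex $C^\bullet_\fm(R)$. Your substitute — a forward induction anchored at $i=r-g-t$ by the ``crude bound'' $\depth Z_i\geq d-r+i+1$ — has a gap: that inequality is only free for $i\geq r-g$, where the tail of the Koszul complex resolves $Z_i$. Part (3) concerns exactly $t<r-g$ with $t>0$, so the base index $r-g-t$ lies strictly below $r-g$; there the tail is no longer a resolution (some $H_j$ with $j>i$ survives), and the phrase ``applying the forward recurrence repeatedly downward from $Z_r=0$ with no $H$ contribution'' does not produce the claimed bound — any recursion descending from $Z_r$ necessarily passes through the backward recurrence $\depth Z_i\geq\min\{d,\ \depth Z_{i+1}-1,\ \depth H_i\}$, which cannot discard the $H_i$ terms. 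The spectral-sequence comparison in the paper sidesteps this by converting the question into a collision between $H^j_\fm(Z_i)$, the vanishing $H^j_\fm(K_l)$ for $j<d$, and the controlled $H^j_\fm(H_l)$, with no base case to supply. If you want to salvage an elementary argument for (3), you should replace your crude bound with the backward recurrence fed by the $\SD_k$-controlled Koszul homologies down to index $r-g-t$, and check that the numerical constraint $a\leq g+t+2-d$ is precisely what makes the losses from the $-1$ shifts tolerable.
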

\begin{proof}$(1)$ is essentially proved in  \cite{V}. $(2)$ was already proved in  \cite[2.5]{Ha}. $(3)$ follows after analyzing the spectral sequences derived from the tensor product of  the truncated Koszul complex
\begin{center}
$0\longrightarrow Z_i(\ff) \longrightarrow K_i \longrightarrow K_{i-1} \longrightarrow \cdots \longrightarrow K_0\longrightarrow 0$.
\end{center}
and  the \v{C}ech complex, $\mathcal{C}^\bullet_\fm(R)$. To prove $(4)$, we consider the depth inequalities derived from the short exact sequences
$0\longrightarrow Z_{i+1}(\ff)\longrightarrow K_{i+1}\longrightarrow B_i(\ff) \longrightarrow 0,$ and
$0\longrightarrow B_i(\ff)\longrightarrow Z_{i}(\ff)\longrightarrow H_i(\ff) \longrightarrow 0.$
To show $(5)$ we use the fact that $\SDC_k$ holds for any $k$ at level $-1$ then a recursive induction
applying $\depth(Z_i)\geq \min\{\depth(H_i), d, \depth(Z_{i+1})-1\}$ proves the assertion.

\end{proof}

In the next Theorem we present sufficient conditions for the acyclicity of $_k\mathcal{Z}^{+}_{\bullet}$. The strategy taken here to prove the acyclicity is the same as the one applied in \cite{Ha} \textbf{(however the reader should notice that the complex $\mathcal{C}_{\bullet}$ defined in \cite{Ha} is a little bit different from  $_0\mathcal{Z}^{+}_{\bullet}$ here, in the former the tail is subsituted by a free complex but still   $\mathcal{C}_{\bullet}$ remains quasi-isomorphic to $_0\mathcal{Z}^{+}_{\bullet}$)}. Since the complex is finite  we avail ourselves of "lemme d'aciclicit\'e" of Peskine-Szpiro; so that we assume some sliding depth conditions and prove the acyclicity in  height $s-1$  wherein  $I=\fa$. By Lemma \ref{l1}, $_k\mathcal{Z}^{+}_{\bullet}$ is exact, if $I=\fa$. Then an induction will show the acyclicity globally.  

Although the proof here is  more involved than \cite[2.8]{Ha}, we prefer to omit it to go faster to newer theorems. The Cohen-Macaulay hypothesis in this theorem is needed  to show that if for an $R$-module $M$, $\depth(M)\geq d-t$ then for any prime $\fp$, $\depth(M_{\fp})\geq \Ht(\fp)-t$, \cite[Section 3.3]{V}.
\begin{Proposition} \label{t1}
Let $(R,\fm)$ be a CM local ring of dimension $d$ and $J=(\fa:I)$  be an $s$-residual intersection. Assume that $I=(f_1,...,f_r)$ and $\Ht(I)=g\geq 1$. Fix $0\leq k\leq \max\{s,s-g+2\}$. Then the complex $_k\mathcal{Z}^{+}_{\bullet}$ is acyclic, if any of the following hypotheses holds
\begin{enumerate}
\item[(1)] $1\leq s\leq 2$ and $s=k$, or

\item[(2)] $r+k\geq s+1$, $k\leq 2$ and $I$ satisfies  $\SDC_1$  at level $s-g-k$, or

\item[(3)] $r+k\leq s$ and $I$ satisfies  SD, or

\item[(4)] $r+k\geq s+1$, $k\geq 3$, $I$ satisfies  $\SDC_1$ at level $s-g-k$, and $\depth(Z_i(\ff))\geq d-s+k$, for $0\leq  i\leq k$.
\end{enumerate}
\end{Proposition}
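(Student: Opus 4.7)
The plan is to invoke the acyclicity lemma of Peskine--Szpiro. Since $_k\mathcal{Z}^+_{\bullet}$ has length at most $s$, it suffices to verify two things: first, depth bounds of the form $\depth(_k\mathcal{Z}^+_i) \geq d-s+i$ at every position $i$, and second, exactness of the complex after localization at every prime of height strictly less than $s$. The Cohen--Macaulayness of $R$, as remarked by the authors just before the statement, is what transfers a uniform $\depth \geq d-s+i$ bound into the local $\depth_\fp \geq \Ht(\fp)-s+i$ required at every prime of height $\geq s$.

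The local-exactness step is handled by the residual hypothesis. For a prime $\fp$ with $\Ht(\fp) < s$, the ideal $J = (\fa : I)$ has height at least $s$, so $J \not\subseteq \fp$; picking $x \in J \setminus \fp$ yields $xI \subseteq \fa$, forcing $I_\fp \subseteq \fa_\fp$ and hence $\fa_\fp = I_\fp$. The construction of $_k\mathcal{Z}^+_{\bullet}$ commutes with localization, so Lemma \ref{l1} applied over $R_\fp$ immediately gives the required exactness.

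For the depth bounds I would dissect (\ref{z+}) into its two halves. In the right part $i \leq \min\{k,s\}$ the module $(D_i)_{[k]}$ is, by (\ref{Di}), a finite direct sum of graded strands of shifted copies of $Z_j(\ff) \y_R S$ with $j$ running over a controlled interval. Since $S$ is a polynomial extension of $R$, each such graded strand has $R$-depth equal to $\depth_R Z_j(\ff)$, and the hypotheses $\SDC_1$ in (2) and (4), respectively $\SD$ in (3) (upgraded to $\SDC$ via Proposition \ref{psdsdc}), provide the required lower bound. In the left tail $i > k$, each graded strand of $H^r_{\frak{g}}(Z_j \y_R S)$ is a finite direct sum of copies of $Z_j$ up to degree shifts, because $H^r_\frak{g}(S)$ is $R$-free in each negative internal degree; so its $R$-depth again reduces to $\depth Z_j(\ff)$ and is controlled by the same sliding-depth data.

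The main obstacle will be matching the depth bounds precisely at the transition indices $i=k$ and $i=k+1$ where the two halves are glued via $\tau_k$, and arranging the four numerical cases to line up. Case (1) is direct inspection of a very short complex. In case (3), with $r+k \leq s$, the whole complex lies inside the tail and only cycles $Z_j$ with small $j$ appear, so $\SD$ alone suffices. In case (2), with $r+k \geq s+1$ and $k \leq 2$, one needs $\SDC_1$ at level $s-g-k$ to control simultaneously the glued term $(D_k)_{[k]}$ and the local-cohomology part. Case (4) imposes the additional bound $\depth(Z_i(\ff)) \geq d-s+k$ for $0 \leq i \leq k$ precisely to patch the bottom of the right half when $k \geq 3$ and $\SDC_1$ alone is too weak there. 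Once this bookkeeping is completed, Peskine--Szpiro together with the local exactness globalizes acyclicity, exactly as in the prototype argument of \cite[2.8]{Ha}.
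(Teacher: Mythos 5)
Your proposal reproduces exactly the strategy that the paper describes in the paragraph preceding the Proposition and then deliberately omits: localization at primes of height $<s$ reduces to $I_\fp=\fa_\fp$, where Lemma~\ref{l1} gives exactness; the components are identified as finite direct sums of Koszul cycles $Z_j(\ff)$ via (\ref{Di}), (\ref{z+}) and the $R$-freeness of the graded pieces of $S$ and of $H^r_{\frak{g}}(S)$; the sliding-depth hypotheses give the bound $\depth(_k\mathcal{Z}^+_i)\geq d-s+i$; and the CM hypothesis transfers this to $\depth_{R_\fp}\geq \Ht(\fp)-s+i$ so that the Peskine--Szpiro acyclicity lemma plus an induction on height globalizes the exactness. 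Since the paper itself omits the case-by-case bookkeeping, there is nothing further to compare; your outline is correct and is essentially identical to the proof the authors indicate.
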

Consequently, having a finite acyclic complex whose components have sufficient depth, one can estimate the depth of the zeroth homology.
\begin{Theorem} \label{t2}
Let $(R,\fm)$ be a  CM local ring of dimension $d$, $I=(f_1,...,f_r)$  be an ideal with $\Ht(I)=g \geq 1$. Let $s\geq g$ and fix $0\leq k\leq \min\{s,s-g+2\}$. Suppose that one the following hypotheses holds
\begin{itemize}
\item[(i)] $r+k\leq s$ and $I$ satisfies $\SD$ condition, or
\item[(ii)] $r+k\geq s+1$, $I$ satisfies $\SDC_1$ at level $s-g-k$ and $\depth(Z_i(\ff))\geq d-s+k$ for $0\leq i\leq k$, or
\item[(iii)] $k\leq s-r+2$ and $I$ satisfies $\SD$, or 
\item[(iv)] $\depth(H_i(\ff))\geq \min\{d-s+k-2,d-g\}$, for $0\leq i\leq k-1$ and $I$ satisfies $\SD$, or 
\item[(v)] $I$ is strongly Cohen-Macaulay.
\end{itemize}
Then for any $s$-residual intersection $J=(\fa:I)$, the complex $_k\mathcal{Z}^{+}_{\bullet}$ is acyclic. Furthermore,
$Sym_R^{k}(I/\fa)$ if $1\leq k\leq s-g+2$, or the disguised residual intersection if $k=0$, is  CM of codimension $s$.

\end{Theorem}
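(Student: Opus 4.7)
The plan is in two phases: first, reduce each of (i)--(v) to one of the four cases of Proposition \ref{t1} to secure the acyclicity of $_k\mathcal{Z}^{+}_{\bullet}$, and then upgrade the acyclicity to the Cohen--Macaulay conclusion via a depth computation on (a free-tail replacement of) the complex.

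For the reduction, hypothesis (i) is verbatim Proposition \ref{t1}(3); hypothesis (ii) is Proposition \ref{t1}(2) when $k\le 2$ and Proposition \ref{t1}(4) when $k\ge 3$. For (iii)--(v), Proposition \ref{psdsdc}(2) upgrades $\SD$ to $\SDC_1$, so only the cycle depth inequality $\depth(Z_i(\ff))\ge d-s+k$ for $0\le i\le k$ of Proposition \ref{t1}(4) remains to be verified. Under (iii), the constraints $k\le s-r+2$ and $k\le s-g+2$ make each term of the $\SDC_1$ estimate $\depth(Z_i)\ge\min\{d-r+i+1,\,d-g+2,\,d\}$ at least $d-s+k$. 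Under (iv), the additional depth bound on the first $k$ Koszul homologies is propagated to cycles through the short exact sequences $0\to Z_{i+1}\to K_{i+1}\to B_i\to 0$ and $0\to B_i\to Z_i\to H_i\to 0$ exactly as in the proof of Proposition \ref{psdsdc}(4). Finally (v) implies (iv), since SCM, being $\SD_{r-g}$, forces $\depth(H_i(\ff))=d-g$ for every $i$.

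For the Cohen--Macaulay conclusion, the acyclic complex $_k\mathcal{Z}^{+}_{\bullet}$ has length $s$ and $H_0$ equals $Sym_R^{k}(I/\fa)$ for $k\ge 1$ or $R/K$ for $k=0$ (with $K$ the disguised residual intersection, sharing the radical of $J$); in either case $H_0$ is annihilated by an ideal of height $\ge s$, so $\dim H_0\le d-s$. Splicing kernels and applying the inductive depth lemma, it suffices to verify $\depth(_k\mathcal{Z}^{+}_i)\ge d-s+i$ at each $i$. From (\ref{Di}) and (\ref{z+}), for $i\le k$ the component $_k\mathcal{Z}^{+}_i$ is a finite direct sum of shifts of Koszul cycles $Z_j$ with $j\le i$; for $i>k$ it is a finite direct sum of modules of the form $Z_j\otimes_R H^r_{\frak{g}}(S)_{[k-i-r+1]}$ whose $R$-depth equals $\depth(Z_j)$ by the $R$-flatness of $H^r_{\frak{g}}(S)$. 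The sliding-depth bounds from the first phase translate into the desired inequality at every position, whence the depth lemma delivers $\depth H_0\ge d-s=\dim H_0$, i.e.\ Cohen--Macaulayness of codimension $s$.

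The main obstacle will be the depth accounting at the top of the complex: $_k\mathcal{Z}^{+}_s$ contains a summand coming from $Z_{r-1}$, whose raw depth under bare $\SD$ is only $d-g$, falling short of the $d$ that a naive depth lemma demands at position $s$. I would resolve this by following the strategy of \cite{Ha}: pass to a quasi-isomorphic complex $\mathcal{C}_\bullet$ in which the local-cohomology tail of $_k\mathcal{Z}^{+}_{\bullet}$ is replaced by a free $R$-complex, so that the top components acquire full depth $d$ and the depth lemma applies cleanly. The remainder of the argument is routine numerical bookkeeping of the sliding-depth estimates.
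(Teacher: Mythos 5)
Your overall plan is the natural one and mirrors what the paper hints at (reduce each hypothesis to one of the cases of Proposition~\ref{t1} for acyclicity, then run the depth lemma along the complex to deduce $\depth H_0\geq d-s=\dim H_0$). The reductions of (i) and (ii) are verbatim. However, there are two genuine problems.

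First, the ``main obstacle'' you identify at the top of the complex is not real. The top Koszul cycle $Z_{r-1}(\ff)$ is always isomorphic to $R$ (the last Koszul differential $K_r\to K_{r-1}$ is injective once $\grade I\geq 1$), so $D_{r+s-1}=S(1-r-s)$ and $_k\mathcal{Z}^{+}_{s}=H^r_{\frak{g}}(D_{r+s-1})_{[k]}$ is a finite free $R$-module of depth $d$, with no sliding-depth input needed. This is exactly the observation the paper makes at the start of the proof of Theorem~\ref{ttype} (``$Z_{r-1}=R$ hence $\depth(_k\mathcal{Z}^{+}_{r+s-1})=d$''). Because the obstacle is spurious, the proposed detour through a quasi-isomorphic free-tail replacement as in \cite{Ha} is not needed \emph{for the reason you give}. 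More generally, $Z_j(\ff)$ is free for all $j\geq r-g+1$, so the worry about the top part of the complex disappears; the delicate depth accounting is actually at the \emph{lower} indices $j\leq r-g$, where only the $\SDC_1$ estimates (or the extra hypotheses in (ii), (iii), (iv)) control depth. You do not carry out that bookkeeping, and it is precisely there that the content of the several different hypothesis sets lives.

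Second, the reduction of (iv) to Proposition~\ref{t1}(4) does not work as stated. You want to derive $\depth(Z_i(\ff))\geq d-s+k$ for $0\leq i\leq k$ from $\depth(H_i(\ff))\geq\min\{d-s+k-2,\,d-g\}$ plus $\SD$. But the depth lemma applied to $0\to B_i\to Z_i\to H_i\to 0$ only gives $\depth(Z_i)\geq\min\{\depth(B_i),\depth(H_i)\}$, and the bound on $H_i$ is already two below the target; chasing through $0\to Z_{i+1}\to K_{i+1}\to B_i\to 0$ loses a further one at each step, so the recursion decays rather than closing the gap. So hypothesis (iv) cannot be funneled through the cycle-depth condition of Proposition~\ref{t1}(4) by the ``propagate through the two short exact sequences'' argument you invoke; a different route (or a sharper depth inequality than the naive depth lemma) is needed for (iv), and your proposal leaves it unjustified.
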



In the above theorem, the cases where $k=0,1$ and $s-g+1$ are the most important ones. For, $k=0$ is related to the disguised residual intersection, $k=1$ is connected to $R/\fa$ and $k=s-g+1$ is about the canonical module of $R/J$.

\begin{Remark} Any of the conditions $(iii)$-$(v)$ are stronger than $(i)$ and $(ii)$. Condition $(i)$ in Theorem \ref{t2} implies that $\depth(R/I)\geq d-s+k$ and  condition  $(v)$ 
implies that  $\depth(R/I)\geq d-g \geq d-s$. The other conditions $(ii),(iii)$ and $(iv)$ yield $\depth(R/I)\geq d-s+k-2$.
Therefore to have a better consequence of the theorem it may not be too much to ask $\depth(R/I)\geq d-s$.
\end{Remark}
The first two items in the next corollary settle conjectures $(3)$ and $(4)$ mentioned in the introduction.


\begin{Corollary}\label{cunmixed} Let $(R,\fm)$ be  a  CM local ring of dimension $d$, $I=(f_1,...,f_r)$ be an ideal with $\Ht(I)=g\geq 1$.
Suppose that  $\depth(R/I)\geq d-s$ and that $I$ satisfies any of the conditions in Theorem \ref{t2} for $k=1$, for instance if $I$ satisfies SD. Then for any algebraic $s$-residual intersection $J=(\fa:I)$, 
\begin{enumerate}
\item[(1)] $\depth(R/\fa)=d-s$ and  thus $\fa$ is minimally generated generated by $s$ elements.
\item[(2)] $J$ is  unmixed  of height $s$.
\item[(3)] $\Ass(R/\fa)\subseteq \Ass(R/I)\bigcup\Ass(R/J)$. Furthermore if $I$ is unmixed then the equality holds.
\item[(4)] If the residual is arithmetic then  $Sym_R^i(I/\fa)$ is a faithful $R/J$-module for all $i$.
\end{enumerate}
\end{Corollary}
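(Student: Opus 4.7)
The plan is to apply Theorem~\ref{t2} at $k=1$ to obtain the Cohen--Macaulay structure on $I/\fa$, and at $k=0$ to obtain the Cohen--Macaulay disguised residual; the four conclusions then follow from a blend of short-exact-sequence depth-chasing and associated-prime identities. Under the hypotheses of the Corollary, Theorem~\ref{t2} with $k=1$ asserts that $_1\mathcal{Z}^+_\bullet$ is acyclic and $I/\fa = Sym^1_R(I/\fa)$ is Cohen--Macaulay of codimension $s$; in particular $\depth_R(I/\fa) = \dim_R(I/\fa) = d-s$ and $\Ann_R(I/\fa) = (\fa :_R I) = J$.

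For part~(1), I would combine the short exact sequence $0 \to I/\fa \to R/\fa \to R/I \to 0$ with the depth lemma and the inputs $\depth(I/\fa) = d-s$, $\depth(R/I) \geq d-s$, to get $\depth(R/\fa) \geq d-s$. For the reverse inequality, any $\fp \in \Ass(I/\fa) \subseteq \Ass(R/\fa)$ contains $\Ann(I/\fa) = J$ and so has height $\geq s$, giving $\depth(R/\fa) \leq d-s$. For the minimality of the generating set, if $\mu(\fa) = t < s$ one picks a minimal generating set for $\fa$; the same colon $J$ is then a $t$-residual intersection to which the hypotheses of Theorem~\ref{t2} carry over (cleanly in the SCM case), and re-applying the theorem at $k=1$ would force $\dim(I/\fa) = d-t > d-s$, contradicting the intrinsic value $\dim(I/\fa) = d-s$.

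For part~(2), Theorem~\ref{t2} at $k=0$ gives the disguised residual $R/K$ Cohen--Macaulay of codimension $s$; by Definition~\ref{DDisguised}, $K \subseteq J$ and $\sqrt K = \sqrt J$, so $\mathrm{Min}(J) = \mathrm{Min}(K)$ and each such prime has height exactly $s$, whence $\Ht(J) = s$. The unmixedness is the main obstacle and the crucial step: I would exploit the faithful module structure arising from $J = \Ann_R(I/\fa)$, which yields an injection $R/J \hookrightarrow \mathrm{End}_R(I/\fa)$ by multiplication. Combining this with the identity $\Ass_R(\Hom_R(M,N)) = \Supp_R(M) \cap \Ass_R(N)$ and with $\Ass_R(I/\fa) = \mathrm{Min}(J)$ (which follows from the CM-ness of $I/\fa$ and its support $V(J)$), one obtains
\[
\Ass(R/J) \;\subseteq\; \Ass\bigl(\mathrm{End}_R(I/\fa)\bigr) \;=\; V(J) \cap \mathrm{Min}(J) \;=\; \mathrm{Min}(J),
\]
so $J$ has no embedded primes.

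For part~(3), the same short exact sequence gives $\Ass(R/\fa) \subseteq \Ass(I/\fa) \cup \Ass(R/I) = \mathrm{Min}(J) \cup \Ass(R/I) \subseteq \Ass(R/J) \cup \Ass(R/I)$. The equality when $I$ is unmixed is obtained by localising at each $\fp \in \Ass(R/I) \cup \Ass(R/J)$: at $\fp \in \mathrm{Min}(I)$ with $\Ht(\fp) < s$, the constraint $\Ht(J) \geq s$ forces $J \not\subseteq \fp$ and hence $\fa_\fp = I_\fp$; at $\fp \in \mathrm{Min}(J)$ of height $s$, the $R_\fp$-depth of $(I/\fa)_\fp$ is zero, placing $\fp \in \Ass(I/\fa) \subseteq \Ass(R/\fa)$. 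For part~(4), in the arithmetic case $(I/\fa)_\fp$ is cyclic at each $\fp \in \mathrm{Min}(J)$ (either $\fp \not\supseteq I$, where $(I/\fa)_\fp \simeq R_\fp/J_\fp$, or $\fp \supseteq I+J$ where the arithmetic condition $\mu_{R_\fp}((I/\fa)_\fp) \leq 1$ together with faithfulness over $R_\fp/J_\fp$ forces $(I/\fa)_\fp \simeq R_\fp/J_\fp$), so $Sym^i_{R_\fp}((I/\fa)_\fp) \simeq R_\fp/J_\fp$ as well. Any $r \in \Ann_R(Sym_R^i(I/\fa))$ then lies in $J_\fp$ at every $\fp \in \mathrm{Min}(J)$, and the unmixedness from (2), yielding the primary decomposition $J = \bigcap_\fp (JR_\fp \cap R)$ over $\fp \in \mathrm{Min}(J)$, forces $r \in J$, proving that $Sym^i_R(I/\fa)$ is a faithful $R/J$-module.
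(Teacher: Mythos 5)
Your proposal is essentially correct and reaches all four conclusions, but it diverges from the paper's proof in several places, and one sub-argument has a gap that you flag yourself.

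For the claim $\mu(\fa)=s$ in part (1), you rerun Theorem~\ref{t2} with $s$ replaced by the putative smaller $t=\mu(\fa)$; as you note, the hypotheses of Theorem~\ref{t2} transport cleanly only under condition~(v) (SCM), since conditions (i)--(iv) involve $s$ explicitly (e.g.\ the level $s-g-k$ in SDC$_1$, and the bound $\depth(Z_i)\geq d-s+k$). The paper instead obtains $\mu(\fa)=s$ for free from the elementary estimate $\depth(M/xM)\geq\depth(M)-1$ for $x\in\fm$ (cf.\ \cite[Exercise 1.2.23]{BH}): if $\fa$ had $t<s$ generators then $\depth(R/\fa)\geq d-t>d-s$, contradicting $\depth(R/\fa)=d-s$. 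That route works under every hypothesis of Theorem~\ref{t2}, not just SCM, so this is the place where your proof is strictly weaker than the paper's.

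For part (2) you establish unmixedness via the injection $R/J\hookrightarrow\End_R(I/\fa)$ and the identity $\Ass(\Hom(M,N))=\Supp(M)\cap\Ass(N)$; this is correct, but the paper uses a more elementary device: $R/J=R/(\fa:I)$ embeds into $(R/\fa)^r$ (via $J=\bigcap_i(\fa:f_i)$), so $\Ass(R/J)\subseteq\Ass(R/\fa)$, and every prime in $\Ass(R/\fa)$ has height $\leq s$ by \cite[1.2.13]{BH} applied to $\depth(R/\fa)=d-s$; combined with $\Ht(\fp)\geq s$ for all $\fp\in\Ass(R/J)$ one gets unmixedness directly. You also invoke Theorem~\ref{t2} at $k=0$ to see $\Ht(J)=s$, but this is unnecessary (the hypothesis grants only $k=1$, and $k=0$ does not automatically follow from it under all of conditions (i)--(iv)); the identity $\Ht(J)=s$ falls out of $\dim(I/\fa)=d-s$ together with $\Ann(I/\fa)=J$ and $R$ CM, with no appeal to the disguised residual. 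Part (3) is close to the paper's. Part (4) is a genuinely different and rather nicer route: the paper proves faithfulness by invoking the collapse of the spectral sequence from \cite[Theorem 2.11]{Ha}, whereas you localize at $\text{Min}(J)$, use the arithmetic hypothesis to show $(I/\fa)_\fp\simeq R_\fp/J_\fp$ is cyclic, observe $\mathrm{Sym}^i$ of a cyclic module is again the same cyclic module, and recover the annihilator from unmixedness; this is self-contained and a worthwhile alternative presentation.
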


\begin{proof}

$(1)$. Applying Theorem \ref{t2}  for $k=1$, we have $\depth(I/\fa)=d-s$. Therefore
the result follows from the exact sequence $0\ra I/\fa \ra R/\fa \ra R/I \ra 0$
and the fact that $\depth(R/I)\geq d-s$ by hypothesis.

$(2)$.  $J$ has codimension at least $s$, thus for any $\fp\in \Ass(R/J)$, $\Ht(\fp)\geq s$. On the other hand, $\Ass(R/J)\subseteq \Ass(R/\fa)$ and any prime ideal in the latter has codimension less than $s$ by $(1)$ (see \cite[1.2.13]{BH}), this yields the unmixedness of $J$.

$(3)$. Let $\fp \in \Ass(R/\fa)$, then $\Ht(\fp)\leq s$, by $(1)$. If $\Ht(\fp)< s$ or $\fp\not \supset J$, then $\fa_{\fp}=I_{\fp}$ and thus $\fp \in \Ass(R/I)$ otherwise $\Ht(\fp)=s$ and $\fp\supseteq J$ which clearly means $\fp \in \Ass(R/J)$. Verification of the last statement is straightforward.

$(4)$ To see this, we refer to the proof of  \cite[Theorem 2.11]{Ha} wherein it is shown that in the spectral sequence defined in (\ref{E2Hor}), $K:=(^{\infty}\E _{hor}^{0,0})_{[0]}\subseteq J\subseteq (^{2}\E _{hor}^{0,0})_{[0]}$. Moreover in the case where the residual is arithmetic this spectral sequence, locally in height $s$,  collapses in the second page, hence all of these inclusion are equality. We just notice that $( ^{2}\E _{hor}^{0,0})_{[0]}=H^{0}_{\frak{g}}(H_{0}(D_{\bullet}))_{[0]}=\bigcup_{i=0}^{\infty}(\bm{\gamma}\cdot  Sym_R^i(I):_R Sym_R^{i+1}(I))$. Thus once the equality in the above line happens, $J=\bm{\gamma}\cdot  Sym_R^i(I):_R Sym_R^{i+1}(I)$ for all $i$. Therefore $Sym_R^i(I/\fa)=Sym_R^{i+1}(I)/\bm{\gamma}\cdot  Sym_R^i(I)$ is a faithful $R/J$-module.

\end{proof}

The following examples show the accurancy of the conditions in Theorem \ref{t2} and Corollary \ref{cunmixed}(4).
\begin{Example}
Let $R=\mathbb{Q}[x_1,\cdots,x_7]$, $M=\left(\begin{array}{ccccclllll}
0&x_1&x_2&x_3\\x_4&x_5&x_6&x_7 \end{array}\right)$ and $I=I_2(M)$. Set $f_0=-x_4x_1, f_1=-x_2x_4,f_2=x_1x_6-x_2x_5,f_3=-x_3x_4,f_4=x_1x_7-x_3x_5$ and $f_5=x_2x_7-x_3x_6$.   \textit{Macaulay2} computations  show that $\depth(Z_1(I))=6, \depth(Z_2(I))=2$ and  $\depth(Z_3(I))=6$ hence $I$ satisfies SDC$_1$ at level $0$ but not at level $1$. However, $Z_1$ has enough depth, thence condition $(ii)$ in Theorem \ref{t2}
is fulfilled whenever $2\leq s\leq 4$. The non-trivial case will be $s=4$. Also $\depth(R/I)=4>d-s=3$. It then follows from Corollary \ref{cunmixed}  that for any $4$-residual intersection $J=\fa:I$, $\Ht(J)=4=\mu(\fa)$. It is easy to see that $I$ satisfies $G_{\infty}$; hence \cite[3.7bis]{EHU} implies that $\ell(I)\geq 5$.
On the other hand   the Pl\"{u}ker relations show that $f_2f_3-f_1f_4+f_0f_5=0$. Hence $I$ is not generated by analytically independent elements- that is $\ell(I)= 5$.
Therefore  the same proposition in \textit{loc. cit.} proves that there must exist  a $5$-residual intersection $J=\fa: I$ such that $\Ht(J)>5$. By the latter, Corollary \ref{cunmixed} guarantees that for that ideal $\fa$, the module $I/\fa$ cannot be CM and thus Theorem \ref{t2} does not work because SDC$_1$ at level $1$ is not satisfied.

Another point about this example is that $\depth(R/I^2)=2$, according to \textit{Macaulay2}; therefore $\Ext^5(R/I^2,R)\neq 0$. The vanishing of this $\Ext$ module would be  a  sufficient to prove that $I$ is $4$-residually $S_2$ in \cite[Theorem 4.1]{CEU} whereas we saw in the above corollary that $I$ is  $4$-residually unmixed.
\end{Example}
\begin{Example}
The arithmetic hypothesis in  Corollary \ref{cunmixed}(4) cannot be dropped. Let $R=\mathbb{Q}[x,y]$, $I=(x,y)$ and $\fa=(x^2,y^2)$. Then $J=(\fa , xy)$ and ${\rm Ann } (Sym^2_R(I/\fa))=(\fa I:I^2)=(x,y)$. The latter is not $J$, since the residual(linkage) is not arithmetic.


\end{Example}

In the case where the residual intersection is geometric we have
stronger corollaries.


 \begin{Corollary}\label{cgeometric} Let $(R,\fm)$ be a  CM local ring of dimension $d$, $I=(f_1,...,f_r)$ an ideal with $\Ht(I)=g\geq 1$.
  Suppose that $\depth(R/I)\geq d-s$ and that $I$ satisfies any of the conditions in Theorem \ref{t2} for some  $1\leq k\leq s-g+2$.
  Then for any geometric $s$-residual intersection $J=(\fa:I)$ and for that $k$, 
\begin{enumerate}
\item[(1)]$I^k/\fa I^{k-1}\simeq Sym_R^k(I/\fa) $ and it  is a faithful maximal  Cohen-Macaulay $R/J$-module.
\item[(2)]$\fa I^{k-1}=I^k\cap J$.
\item[(3)] $I^k+J$ is a CM ideal of height $s+1$ and $\Ht(I^k+J/J)=1$.
\end{enumerate}
\end{Corollary}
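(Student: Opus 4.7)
The plan is to prove all three parts by comparing supports against the purity of $Sym^k_R(I/\fa)$ provided by Theorem \ref{t2}.

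For (1), there is a canonical surjection $\sigma\colon Sym^k_R(I/\fa)\twoheadrightarrow I^k/\fa I^{k-1}$. I would verify that $\ker(\sigma)$ vanishes at every prime $\fp\not\supseteq I+J$. If $\fp\not\supseteq I$, then $I_{\fp}=R_{\fp}$ and $J_{\fp}=\fa_{\fp}:R_{\fp}=\fa_{\fp}$, so both modules reduce to $R_{\fp}/\fa_{\fp}$. If $\fp\supseteq I$ but $\fp\not\supseteq J$, any element $x\in J\setminus\fp$ is a unit in $R_{\fp}$, and $xI\subseteq\fa$ then forces $I_{\fp}=\fa_{\fp}$, making both modules zero. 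Since the residual is geometric, $V(I+J)$ has codimension $\geq s+1$, so $\ker(\sigma)$ is supported in codimension $\geq s+1$. But Theorem \ref{t2} gives $Sym^k_R(I/\fa)$ Cohen-Macaulay of codimension $s$, hence unmixed, so a submodule supported in strictly higher codimension must vanish. The maximal Cohen-Macaulay and faithful claims over $R/J$ then follow from Corollary \ref{cunmixed}(4) (geometric implies arithmetic) combined with $\depth Sym^k_R(I/\fa)=d-s=\dim R/J$.

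For (2), the same local analysis shows that $(I^k\cap J)/\fa I^{k-1}$, viewed via (1) as a submodule of $I^k/\fa I^{k-1}\simeq Sym^k_R(I/\fa)$, is supported on $V(I+J)$: off $V(I)$ the containment $\fa I^{k-1}\subseteq I^k\cap J$ becomes $\fa_{\fp}=J_{\fp}=I^k\cap J_{\fp}$ (since $I_{\fp}=R_{\fp}$), and off $V(J)$ with $\fp\supseteq I$ both sides reduce to $\fa_{\fp}^k$. By the same purity of $Sym^k_R(I/\fa)$, the submodule vanishes, yielding $I^k\cap J=\fa I^{k-1}$.

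For (3), combining (1) and (2) gives $(I^k+J)/J\simeq I^k/(I^k\cap J)=I^k/\fa I^{k-1}\simeq Sym^k_R(I/\fa)$, producing the short exact sequence
\[ 0\longrightarrow Sym^k_R(I/\fa)\longrightarrow R/J\longrightarrow R/(I^k+J)\longrightarrow 0. \]
Applying Theorem \ref{t2} at $k=0$ and invoking the equality of disguised and algebraic residuals in the arithmetic case (Definition \ref{DDisguised}) makes $R/J$ Cohen-Macaulay of codimension $s$. Since $Sym^k_R(I/\fa)$ is also Cohen-Macaulay of codimension $s$, the depth lemma forces $\depth R/(I^k+J)\geq d-s-1$, while $\sqrt{I^k+J}=\sqrt{I+J}$ and the geometric hypothesis give $\dim R/(I^k+J)\leq d-s-1$. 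Equality of depth and dimension gives $R/(I^k+J)$ Cohen-Macaulay of dimension $d-s-1$, hence $\Ht(I^k+J)=s+1$ and $\Ht((I^k+J)/J)=1$.

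The main subtlety is the local verification at primes $\fp\supseteq I$ with $\fp\not\supseteq J$, where one must exploit a unit multiplier in $J_{\fp}$ to force $I_{\fp}=\fa_{\fp}$; once this is in hand, everything reduces to the unmixedness of $Sym^k_R(I/\fa)$ from Theorem \ref{t2}. A secondary bookkeeping point is confirming that whichever hypothesis of Theorem \ref{t2} is supplied at the chosen $k$ also covers the case $k=0$ needed to put $R/J$ in Cohen-Macaulay form in part (3); this is immediate under SCM, SD, or $r+k\leq s$.
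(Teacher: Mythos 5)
Your proof is correct and takes essentially the same route as the paper's: for (1) the canonical surjection from $Sym_R^k(I/\fa)$ onto $I^k/\fa I^{k-1}$, local vanishing of the kernel off $V(I+J)$ via the geometric hypothesis, and the unmixedness (equivalently, CM codimension $s$) of $Sym_R^k(I/\fa)$ from Theorem \ref{t2}; for (2) the identification $(I^k+J)/J\simeq I^k/\fa I^{k-1}$ by the same purity argument; for (3) the short exact sequence $0\to (I^k+J)/J\to R/J\to R/(I^k+J)\to 0$ together with $R/J$ CM and $\Ht(I^k+J)\geq s+1$. Your closing remark about needing the $k=0$ hypotheses of Theorem \ref{t2} (to get $R/J$ CM via the disguised residual) is a point the paper passes over silently; your observation that this is automatic under conditions (i), (iii), (iv), (v) but not obviously under (ii) is a legitimate refinement rather than a gap in your argument.
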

\begin{proof}

$(1)$. Consider the natural map $\varphi :\frac{Sym_R^k(I)}{\bm{\gamma} Sym_R^{k-1}(I)}\rightarrow \frac{I^k}{\fa I^{k-1}}$  that sends $T_i$ to $f_i$.
  $\varphi$ is onto. Let $K=\Ker(\varphi)$. We show that $\Ass(K)=\emptyset$, hence $K=0$.  $\Ass(K)\subseteq \Ass(\frac{Sym_R^k(I)}{\bm{\gamma} Sym_R^{k-1}(I)})$. By Theorem \ref{t2},
$\depth(\frac{Sym_R^k(I)}{\bm{\gamma} Sym_R^{k-1}(I)})=d-s$, hence for any associated prime $\fp$ in the latter $\Ht(\fp)\leq s$. However, since $J$ is a
geometric residual, for any $\fp$ of codimension $\leq s$, either $I_{\fp}=\fa_{\fp}$ or $I_{\fp}=(1)$. In the former
 case $\frac{Sym_R^k(I_{\fp})}{\bm{\gamma}_{\fp} Sym_R^{k-1}(I_{\fp})}=0$, for $k\geq 1$, which implies that $\fp\not \in \Ass(K)$.
 In the latter case, $Sym_R^k(I_{\fp})=R_{\fp}$, hence $\varphi_{\fp}$ is an isomorphism, which means $K_{\fp}=0$ in particular $\fp\not \in \Ass(K)$. Therefore $\Ass(K)=\emptyset$, hence $K=0$.  The CM ness of $Sym_R^k(I/\fa)$ was already
 shown in Theorem \ref{t2}. To see that $I^k/\fa I^{k-1}$ is faithful one may use Corollary \ref{cunmixed} and part $(1)$, or,  just notice that $J\subseteq \fa I^{k-1}:I^k $ and that
 the equality holds locally at all associated primes of $J$ which are of height $s$.

 $(2)$. Consider the canonical map $\psi:\frac{I^k}{\fa I^{k-1}}\rightarrow \frac{I^k+J}{J}$. By $(2)$, $I^k/\fa I^{k-1}$ is
 CM of dimension $d-s$ which  in conjunction with the fact that the residual is geometric shows that $\psi$ is an isomorphism for all $0\leq k\leq s-g+2$.
 Notice that $\frac{I^k+J}{J} \simeq \frac{I^k}{J\cap I^{k}} $. Thus the canonical map imply $J\cap I^{k}=\fa I^{k-1}$.

 $(3)$. By $(1)$ and $(2)$,  $(I^k+J)/J$ is CM of dimension $d-s$. Also by  Theorem \ref{t2}, $R/J$ is CM of dimension $d-s$. Thus the exact sequence
$0\ra (I^k+J)/J\ra R/J\ra R/(I^k+J)\ra 0$
implies $\depth(R/I^k+J)\geq d-s-1$. On the other hand, since the residual is geometric, $\Ht(I^k+J)\geq s+1$ which in turn completes the proof. To see that $\Ht(I^k+J/J)=1$, it is just enough to notice that $R/J$ is CM.

\end{proof}

We now proceed to determine the structure of the canonical module of
$R/J$. This achievement is via  a study of the type of an appropriate
candidate
 for the canonical module. Recall that in a Notherian local ring $(R,\fm)$ the type of a finitely generated module $M$
 is the dimension of  the $R/\fm$-vector space   $\Ext^{\depth(M)}_R(R/\fm,M)$ and it is denoted by $r_R(M)$ or just $r(M)$.

\begin{Theorem}\label{ttype} Let $(R,\fm)$ be a  CM local ring of dimension $d$ and  $I=(f_1,...,f_r)$  be an ideal with $\Ht(I)=g\geq 2$.
 Let $J=(\fa:I)$ be an $s$-residual intersection  of $I$ and  $1\leq k\leq s-g+1$. Suppose that $I$ is SCM.
Then $$r_R(Sym_R^{k}(I/\fa))\leq \binom{r+s-g-k}{r-1}r_R(R).$$

\end{Theorem}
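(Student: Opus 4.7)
The approach is to use the acyclic resolution $_k\mathcal{Z}^+_\bullet$ furnished by Theorem \ref{t2}(v) (which applies, since $I$ is SCM and $1\le k\le s-g+1\le s-g+2$) as a (non-free) resolution of $M:=Sym^k_R(I/\fa)$, and combine it with local duality. First I would pass to the $\fm$-adic completion of $R$ so that a canonical module $\omega_R$ exists; since both the type and the bound are invariant under completion, nothing is lost. By Theorem \ref{t2}, $M$ is Cohen-Macaulay of codimension $s$, so
\begin{equation*}
r_R(M)=\mu_R(\omega_M),\qquad \omega_M:=\Ext^s_R(M,\omega_R),
\end{equation*}
and the problem reduces to bounding $\mu_R(\omega_M)$.

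The heart of the argument is to produce a surjection
\begin{equation*}
\Phi\colon \omega_R\otimes_R Sym^{s-g+1-k}_R(I/\fa)\twoheadrightarrow \omega_M.
\end{equation*}
I would obtain $\Phi$ from the multiplication pairing
\begin{equation*}
Sym^k_R(I/\fa)\otimes_R Sym^{s-g+1-k}_R(I/\fa)\longrightarrow Sym^{s-g+1}_R(I/\fa)
\end{equation*}
in the symmetric algebra of $I/\fa$, combined with the Cohen-Macaulay analog of Corollary \ref{ccanonical} identifying $\omega_{R/J}$ with $\omega_R\otimes_R Sym^{s-g+1}_R(I/\fa)$ (the required identification is obtained by dualizing the complex $_0\mathcal{Z}^+_\bullet$ into $\omega_R$ and invoking its acyclicity), and finally by adjunction rewriting the target as $\Hom_{R/J}(Sym^k(I/\fa),\omega_{R/J})\cong\omega_M$. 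To verify the surjectivity of $\Phi$, I would check it locally at the associated primes $\fp$ of $J$; these all have height exactly $s$ by Corollary \ref{cunmixed}, and at such $\fp$ the residual $J_\fp=\fa_\fp:I_\fp$ becomes arithmetic and $(I/\fa)_\fp$ becomes cyclic, making the pairing manifestly perfect.

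Granting the surjection $\Phi$, the bound follows by counting: $\mu_R(\omega_R)=r_R(R)$ by definition, and since $I/\fa$ is generated as an $R$-module by the $r$ images of the generators of $I$, the module $Sym^{s-g+1-k}_R(I/\fa)$ is a quotient of $Sym^{s-g+1-k}_R(R^r)$ and hence has at most $\binom{(s-g+1-k)+r-1}{r-1}=\binom{r+s-g-k}{r-1}$ generators. Multiplicativity of $\mu$ under tensor product with $\omega_R$ yields $\mu_R(\omega_M)\le\binom{r+s-g-k}{r-1}r_R(R)$, completing the proof. The main obstacle is the construction of $\Phi$ and the verification of its surjectivity without the $G_s$ hypothesis; correctly pinning down $\omega_{R/J}$ in the non-Gorenstein CM setting, and controlling the image of $\Phi$ via the complex $_k\mathcal{Z}^+_\bullet$, form the technical core. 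Checking surjectivity at associated primes of height exactly $s$ (rather than at generic points only) demands a careful argument via the hypercohomology spectral sequence obtained by dualizing $_k\mathcal{Z}^+_\bullet$ into $\omega_R$, together with the depth bounds on the cycles $Z_j(\ff)$ implied by SCM.
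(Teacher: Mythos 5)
Your proposal diverges substantially from the paper's proof, and I see two genuine gaps.

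The paper bounds the type directly and elementarily. It reduces to $g=2$ by going modulo a regular sequence of length $g-2$ inside a minimal generating set of $\fa$ (this uses Corollary \ref{cunmixed} to know $\mu(\fa)=s$). In the height-$2$ case it records depth bounds $\depth({}_k\mathcal{Z}^+_i)\ge d-s+i+1$ on the terms of the acyclic complex ${}_k\mathcal{Z}^+_\bullet$, forms the third-quadrant double complex $\Hom_R(F_\bullet,{}_k\mathcal{Z}^+_\bullet)$ with $F_\bullet$ a free resolution of $R/\fm$, and compares the two spectral sequences. The depth bounds kill all vertical $\Ext$ terms in the relevant degree except one corner, and the convergence gives an injection $\Ext^{d-s}_R(R/\fm,Sym^k_R(I/\fa))\hookrightarrow\Ext^d_R(R/\fm,{}_k\mathcal{Z}^+_s)$. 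Since ${}_k\mathcal{Z}^+_s=H^r_{\frak g}(S)_{[-r+1-s+k]}$, the target is computed from the inverse-polynomial structure of $H^r_{\frak g}(S)$ and the count of monomial solutions gives the binomial coefficient directly. No appeal to the canonical module of $R/J$ is made.

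Your proposal has two problems. First, it is circular: the identification of $\omega_{R/J}$ with (a twist of) $Sym^{s-g+1}_R(I/\fa)$ is precisely Corollary \ref{ccanonical}, which the paper derives \emph{from} Theorem \ref{ttype} (the type-one statement is needed to invoke \cite[3.3.13]{BH}). Your parenthetical claim that this identification ``is obtained by dualizing the complex ${}_0\mathcal{Z}^+_\bullet$ into $\omega_R$ and invoking its acyclicity'' does not stand on its own: ${}_0\mathcal{Z}^+_\bullet$ is not a complex of free or even maximal Cohen--Macaulay modules, so dualizing into $\omega_R$ does not preserve exactness without a separate depth-and-spectral-sequence analysis, which is essentially the content of the paper's proof you would be trying to avoid. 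Second, your local surjectivity check for $\Phi$ assumes that at each associated prime $\fp$ of $J$ the residual becomes arithmetic and $(I/\fa)_\fp$ is cyclic. That is not a hypothesis of the theorem: Theorem \ref{ttype} is stated for an arbitrary algebraic $s$-residual intersection, and in the non-arithmetic case there can be associated primes of $J$ of height $s$ containing $I+J$ at which $\mu((I/\fa)_\fp)\ge2$, so the pairing is not manifestly perfect there. The bound $\mu_R(Sym^{s-g+1-k}_R(I/\fa))\le\binom{r+s-g-k}{r-1}$ is fine as far as it goes, but without a correct and non-circular construction of the surjection $\Phi$ the argument does not close.
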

\begin{proof}

First we deal with the case where $g=2$. Consider the complex
$_k\mathcal{Z}^{+}_{\bullet}$
\begin{equation}\label{TZ'+}
\xymatrix{0\ar[r]&H_{\frak{g}}^{r}(D_{r+s-1})_{[k]}\ar[r]&\cdots\ar^{\phi_k}[r]&H_{\frak{g}}^{r}(D_{r+k})_{[k]}\ar[r]^{\tau_k}&
(D_{k})_{[k]}\ar[r]&\cdots\ar[r]&(D_{0})_{[k]}\ar[r]&0}
\end{equation}
 We study the depth of the components of this complex.  $Z_{r-1}=R$  hence $\depth(_k\mathcal{Z}^{+}_{r+s-1})=d$. For $k\leq j\leq s-2$, the first cycle which appears in $D_{r+j}$ is
 $Z_{r-s+j}$ and it has depth equal to $d=d-g+2$, by SCM condition, which is at least $d-s+(j+1)+1$.
  As for the right part of the
 complex we have $\depth(Z_i)=d-g+2\geq d-s+k+1 $  for all $0\leq i \leq k$, since by
 assumption $k\leq s-g+1$.

  Summing up we have \begin{equation}\label{depthz'}\depth(_k\mathcal{Z}^{+}_{i})\geq
 d-s+i+1 \text{~~for all~~} i=0,\cdots,s-1.\end{equation}

Now, let $F_{\bullet}\ra R/\fm$ be a free resolution of $R/\fm$. We
put the double complex
$\Hom_R(F_{\bullet},~_k\mathcal{Z}^{+}_{\bullet})$ in the third
quadrant with $\Hom_R(F_0,~_k\mathcal{Z}^{+}_{0})$ in the center.
The two arisen spectral sequences are:
\begin{equation*}
^\infty \E^{-i,-j}_{hor}=\left \{
\begin{array}{cc}
\Ext_R^{j}(R/\fm, Sym_R^{k}(I/\fa)),& i=0 \\
 0& i\neq 0,
\end{array}
\right.
\end{equation*}
and $^1 \E^{-i,-j}_{ver}=\Ext_R^{j}(R/\fm,~
_k\mathcal{Z}^{+}_{i})$. The latter vanishes for  all $(i,j)$ where
$j-i\leq d-s$ and $i\neq s$,  according to (\ref{depthz'}). The
convergence of both complexes to the homology module of the total
complex implies that $\Ext_R^{d-s}(R/\fm, Sym_R^{k}(I/\fa))\simeq~~ ^\infty
\E^{-s,-d}_{ver}$. The latter is a submodule of $\Ext_R^{d}(R/\fm,~
_k\mathcal{Z}^{+}_{s})$. Therefore $r_R(Sym_R^{k}(I/\fa))\leq \Dim_{R/\fm}(
\Ext_R^{d}(R/\fm,~ _k\mathcal{Z}^{+}_{s}))$.

Following  from the construction of $\mathcal{D}_{\bullet}$,
$D_{r+s-1}=S(-r+1-s)$. Hence
$$_k\mathcal{Z}^{+}_{s}=H^r_{\frak{g}}(S)_{[-r+1-s+k]}.$$
To calculate the dimension of the above Ext module we just need to
know how many copies of $R$ appear in the inverse polynomial
structure of the above local cohomology module. The answer is simply
the positive solutions of the numerical equation
$\A_1+\cdots+\A_r=r-1+s-k$ which is $\binom{r-k+s-2}{r-1}$.
Therefore $r_R(Sym_R^{k}(I/\fa))\leq \binom{r+s-2-k}{r-1}r(R)$ in the case where $g=2$.

Now, let $\Ht(I)=g\geq 2$. We choose a regular sequence $\mathbf{a}$ of length $g-2$ inside $\fa$ which is a part of a minimal generating set of $\fa$.
$I/\aa$ is still SCM by \cite[Corollary 1.5]{H}, moreover $J/\aa=\fa/\aa:I/\aa$. Recall that $\mu(\fa)=s$ by Corollary  \ref{cunmixed}, therefore $J/\aa$ is an $(s-g+2)$-residual intersection of $I/\aa$ which is of height $2$. Hence we return to the  case $g=2$, so that
 $$r_R(Sym_R^{k}(I/\fa))=r_{R/\aa}(Sym_{R/\aa}^{k}(I/\fa))\leq \binom{r+(s-g+2)-2-k}{r-1}r_{R/\aa}(R/\aa)=\binom{r+s-g-k}{r-1}r_R(R).$$
(see \cite[Exercise
1.2.26]{BH}).

\end{proof}

\begin{Remark} Clearly the above proof works  as well for $k=0$, hence if $I$ is SCM of $\Ht(I)\geq 2$ and $K$ is a disguised $s$-residual intersection of $I$ then  $$r_R(R/K)\leq \binom{r+s-g}{r-1}r_R(R).$$
\end{Remark}

Everything is now ready to explain the structure of the canonical module.
\begin{Theorem}\label{ccanonical}Suppose that $(R,\fm)$ is a  Gorenstein local ring of dimension $d$. Let  $I$ be a SCM ideal with $\Ht(I)=g$ and  $J=(\fa:I)$ be an arithmetic $s$-residual intersection  of $I$. Then $\omega_{R/J}\simeq Sym_R^{s-g+1}(I/\fa)$. It is isomorphic to $I^{s-g+1}+J/J$ in the geometric case; Furthermore  $R/J$ is generically Gorenstein and,  $R/J$ is Gorenstein if and only if $I/\fa$ is principal.
\end{Theorem}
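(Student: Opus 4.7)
The plan is to identify $M := Sym_R^{s-g+1}(I/\fa)$ as a maximal Cohen--Macaulay (MCM), faithful, type-one module over $A := R/J$, and then conclude $M \cong \omega_{R/J}$ via an Aoyama-type characterization of the canonical module. Since $I$ is SCM all the sliding depth hypotheses hold, so the earlier results apply: by Corollary \ref{cunmixed}, $A$ is CM of dimension $d-s$, and as $R$ is Gorenstein, $A$ admits a canonical module $\omega_A$. Applying Theorem \ref{t2} with $k=s-g+1$ (which lies in the admissible range $1\le k\le s-g+2$) shows $\depth(M)=d-s$, i.e.\ $M$ is MCM over $A$. Faithfulness of $M$ as an $A$-module is exactly Corollary \ref{cunmixed}(4), using the arithmetic hypothesis. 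Finally, Theorem \ref{ttype} with the same $k$ gives $r_R(M) \le \binom{r-1}{r-1}\, r_R(R) = 1$; as $M\ne 0$ the type equals $1$, and a standard socle argument shows it agrees with $r_A(M)$.

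To turn these three properties into the isomorphism I will use the following fact: for any MCM $A$-module $N$ of type one, the $A$-dual $N^{\vee} := \Hom_A(N,\omega_A)$ is MCM with $\mu_A(N^\vee) = r_A(N) = 1$ (by the $\mu\!\leftrightarrow\!r$ duality on MCM modules), so $N^\vee \cong A/\fb$ for some ideal $\fb \subseteq A$. Reflexivity of MCM modules with respect to $\omega_A$ then yields
\[
N \,\cong\, N^{\vee\vee} \,\cong\, \Hom_A(A/\fb,\omega_A) \,=\, (0:_{\omega_A}\fb).
\]
Since $\fb$ annihilates $(0:_{\omega_A}\fb)$, faithfulness of $N$ forces $\fb=0$, so $N^\vee \cong A$ and $N\cong\omega_A$. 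Taking $N=M$ yields $M \cong \omega_{R/J}$. In the geometric case, Corollary \ref{cgeometric}(1),(2) identifies $M$ with $I^{s-g+1}/\fa I^{s-g}$ and then with $(I^{s-g+1}+J)/J$, completing the second claim.

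For generic Gorensteinness, let $\fp$ be a minimal prime of $J$; by Corollary \ref{cunmixed}(2), $\Ht(\fp)=s$. If $\fp \not\supseteq I$, then $I_\fp = R_\fp$ so $J_\fp=\fa_\fp$; as $\fa$ is minimally generated by $s$ elements (Corollary \ref{cunmixed}(1)) and $\fa_\fp$ has height $s$ in the CM ring $R_\fp$, these generators form a regular sequence, so $(R/J)_\fp$ is a complete intersection, hence Gorenstein. If $\fp \supseteq I$, the arithmetic hypothesis gives $\mu_{R_\fp}((I/\fa)_\fp) \le 1$; the value $\mu=0$ would force $J_\fp = R_\fp$ (contradicting $J\subseteq\fp$), so $(I/\fa)_\fp$ is cyclic, whence $\omega_{(R/J)_\fp} \cong M_\fp$ is cyclic and $(R/J)_\fp$ is Gorenstein. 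Finally, $R/J$ is Gorenstein iff $\omega_{R/J} \cong M$ is cyclic; the formula $\mu(Sym_R^k(I/\fa)) = \binom{\mu(I/\fa)+k-1}{k}$ with $k=s-g+1\ge 1$ shows cyclicity is equivalent to $\mu(I/\fa)\le 1$, i.e., $I/\fa$ principal (note $I\ne\fa$, else $J=R$). The main technical obstacle is the reflexivity $N\cong N^{\vee\vee}$ for MCM modules; this is standard once $\omega_A$ exists, so no additional hypothesis is required.
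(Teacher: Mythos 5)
Your strategy matches the paper's: show $M := Sym_R^{s-g+1}(I/\fa)$ is MCM, faithful, and of type one over $R/J$, then conclude $M \cong \omega_{R/J}$. The paper simply cites [BH, 3.3.13] for that final implication; your self-contained derivation via canonical duality ($N^\vee$ cyclic by $\mu(N^\vee)=r(N)=1$, reflexivity of MCM modules, and faithfulness killing the annihilator) is a correct re-proof of the same fact. Your arguments for the geometric identification, generic Gorensteinness, and the Gorenstein-iff-principal criterion also track the paper's and are sound.

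The one genuine gap is the case $g=1$. You invoke Theorem \ref{ttype} to bound the type, but that theorem carries the explicit hypothesis $\Ht(I)=g\geq 2$; its proof reduces to $g=2$ by factoring out a regular sequence of length $g-2$ inside $\fa$, and no such sequence is available when $g=1$. The paper addresses this at the very start of its proof: when $g=1$, pass to $R[x]_{\fm+(x)}$ with ideals $\fa+(x)\subseteq I+(x)$, note $(\fa+(x)):(I+(x))=J+(x)$, observe that the height of $I+(x)$ rises to $g+1\geq 2$ while $Sym^{s-g+1}$ of the quotient module is unchanged, and then apply the argument there. Without this reduction, or an alternative type bound valid for height-one ideals, your proof is incomplete. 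A smaller issue of attribution: you cite Corollary \ref{cunmixed} for the Cohen--Macaulayness of $R/J$, but that corollary gives only unmixedness; the CM statement comes from Theorem \ref{t2} with $k=0$ together with the identification (in the arithmetic case) of the disguised residual intersection with $J$.
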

\begin{proof}
Without loss of  generality, we may suppose that $g \geq 2$, otherwise we may add a new variable $x$ to $R$ and consider ideals $\fa+(x)$ and $I+(x)$ in the local ring $R[x]_{\fm+(x)}$ wherein we have $J+(x)=(\fa+(x)):(I+(x))$.

$R/J$ is CM by Theorem \ref{t2}. Respectively, according to Theorem \ref{t2}, Corollary \ref{cunmixed} and Theorem \ref{ttype},  $Sym_R^{s-g+1}(I/\fa)$ is a maximal CM, faithful $R/J$-module and of type $1$. Therefore it is the canonical module of $R/J$ by \cite[3.3.13]{BH}. In the geometric case $Sym_R^{s-g+1}(I/\fa)\simeq I^{s-g+1}+J/J$ by Corollary \ref{cgeometric}.

To see that $R/J$ is generically Gorenstein, notice that for any prime ideal $\fp\supseteq J$ of height $s$. $r((R/J)_{\fp})=\mu((\omega_{R/J})_{\fp})=\mu(Sym_{R_{\fp}}^{s-g+1}((I/\fa)_{\fp}))$ which is $1$ since the residual is arithmetic by the assumption.  Finally let $t=\mu(I/\fa)$ then $r(R/J)=\mu(\omega_{R/J})=\binom{s-g+t}{t-1}$ which is one if and only if $t=1$.
\end{proof}


\section{Hilbert Function,  Castelnuovo-Mumford regularity}\label{App}

Athough the family $\{_k\mathcal{Z}^{+}_{\bullet}\}$ does not consist of free complexes, however it
can approximate $R/J$ and $Sym_R^k(I/\fa)$ very closely. Several numerical invariants or functions such as Castelnuovo-Mumford regularity,
Projective dimension and Hilbert function can be estimated via this family. In \cite{CEU}, Chardin, Eisenbud and Ulrich restated an old question
(\cite{S}) asking for which open sets of ideals $\fa$ the Hilbert function of $R/\fa$ depends only on the degrees of the generators of $\fa$. In \cite{CEU}, the authors consider the following two conditions
\begin{itemize}
\item[(A1)]If the Hilbert function of $R/\fa$ is constant on the open set of ideals $\fa$ generated by $s$
forms of the given degrees such that codim$(\fa : I)\geq s$; and
\item[(A2)]if the Hilbert function of $R/(\fa : I)$ is constant on this set.
\end{itemize}
It is shown in \cite[Theorem 2.1]{CEU} that ideals with some slight depth conditions in conjunction with  $G_{s-1}$ or $G_s$  satisfy these two conditions. In this direction we have the following result which in addition to show the validity of the above conditions $(A1)$ and $(A2)$ under some sliding depth condition, provides a method to compute the desired Hilbert functions.

\begin{Proposition}\label{phil} Let $R$ be a CM graded ring over an Artinian local ring $R_0$, $I$ and $\fa$ be two homogenous ideals of $R$. Let $J=(\fa:I)$ be an $s$-residual intersection. Let $0\leq k\leq s-g+2$ and suppose that $I$ satisfies any  of the following hypotheses:
\begin{enumerate}
\item[(1)] $1\leq s\leq 2$ and $s=k$, or
\item[(2)] $r+k\geq s+1$, $k\leq 2$ and $I$ satisfies  $\SDC_1$  at level $s-g-k$, or
\item[(3)] $r+k\leq s$ and $I$ satisfies  SD, or
\item[(4)] $r+k\geq s+1$, $k\geq 3$, $I$ satisfies  $\SDC_1$ at level $s-g-k$, and $\depth(Z_i(\ff))\geq d-s+k$, for $0\leq  i\leq k$.
\end{enumerate}
  Then the Hilbert function of  the disguised $s$-residual intersection, if $k=0$,  and that of $Sym_R^k(I/\fa)$, if $1\leq k\leq s-g+2$, depends only on the \textbf{degrees} of the generators of $\fa$ and the Koszul homologies of $I$.
In particular if $I$ satisfies  any of the above conditions for $k=1$ then the Hilbert function of $R/\fa$ satisfies $(A1)$.
\end{Proposition}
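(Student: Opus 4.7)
The plan is to exploit the acyclicity of $_k\mathcal{Z}^{+}_{\bullet}$ together with additivity of Hilbert series. Hypotheses $(1)$--$(4)$ are literally the four cases of Proposition~\ref{t1}, so (in its graded incarnation, since $R$ is graded over the Artinian local ring $R_0$) that proposition already gives that $_k\mathcal{Z}^{+}_{\bullet}$ is acyclic. By the discussion immediately after the construction of $_k\mathcal{Z}^{+}_{\bullet}$ and in Definition~\ref{DDisguised}, this identifies $H_0(_k\mathcal{Z}^{+}_{\bullet})$ with $Sym_R^{k}(I/\fa)$ for $1\leq k\leq s-g+2$, and with the disguised residual $R/K$ for $k=0$. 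Since $_k\mathcal{Z}^{+}_{\bullet}$ is a finite acyclic complex of finitely generated graded $R$-modules and Hilbert series are additive on short exact sequences,
$$HS\bigl(H_0(_k\mathcal{Z}^{+}_{\bullet})\bigr)=\sum_{i=0}^{s}(-1)^{i}\,HS(_k\mathcal{Z}^{+}_{i}).$$
Thus it suffices to show that $HS(_k\mathcal{Z}^{+}_{i})$ is determined, for every $i$, by the Koszul homologies of $I$ (with respect to the fixed generating set $\ff$) and by the degrees $\deg(a_1),\ldots,\deg(a_s)$.

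Next I would unpack the shape of each term. By (\ref{z+}) the terms are $(D_i)_{[k]}$ for $i\leq k$ and $H^{r}_{\frak{g}}(D_{i+r-1})_{[k]}$ for $i>k$, and by (\ref{Di}) each $D_m$ is a direct sum of shifted copies of $Z_{j}(\ff)\otimes_R S$. Working with the bigrading on $S=R[T_1,\ldots,T_r]$ in which $\deg T_j=(\deg f_j,\,1)$---so that the surjection $S\to\mathcal{S}_I$, $T_j\mapsto f_j$, is homogeneous---the $R$-grading shifts coming from $K_\bullet(\bm{\gamma},S)$ are exactly sums of $\deg(\gamma_i)=\deg(a_i)$. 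Taking the $T$-degree $k$ strand of $Z_j\otimes_R S$ produces $Z_j\otimes_R F$, where $F$ is a free $R$-module whose shifts depend only on the $\deg(f_j)$'s; the same applies on the local cohomology side, since $H^{r}_{\frak{g}}(Z_j\otimes_R S)=Z_j\otimes_R H^{r}_{\frak{g}}(S)$ and each graded strand of $H^{r}_{\frak{g}}(S)$ is a finitely generated free $R$-module with shifts prescribed by the $\deg(f_j)$'s.

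This reduces the question to the Hilbert series of the cycle modules $Z_{j}(\ff)$, which are computed inductively from the Koszul complex of $\ff$ using the standard short exact sequences
$$0\to Z_j\to K_j(\ff)\to B_{j-1}\to 0\qquad\text{and}\qquad 0\to B_{j-1}\to Z_{j-1}\to H_{j-1}(\ff;R)\to 0,$$
starting from $Z_0=R$. Hence each $HS(Z_j)$ is a function of $R$, the $\deg(f_j)$'s, and the Koszul homologies $H_\ast(\ff;R)$; combining with the previous paragraph, $HS(_k\mathcal{Z}^{+}_{i})$, and therefore $HS\bigl(H_0(_k\mathcal{Z}^{+}_{\bullet})\bigr)$, depends only on the Koszul homologies of $I$ and the degrees of the $a_i$'s. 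This proves the first assertion.

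For the statement about $(A1)$, specialise to $k=1$: then $H_0(_1\mathcal{Z}^{+}_{\bullet})=Sym_R^{1}(I/\fa)=I/\fa$, and the exact sequence $0\to I/\fa\to R/\fa\to R/I\to 0$ gives $HS(R/\fa)=HS(R/I)+HS(I/\fa)$. The first summand is independent of $\fa$, and the second is constant on the prescribed open set by the preceding step, yielding $(A1)$. I expect the only real technical hurdle to be the bookkeeping of the bigrading on $S$ and on $K_\bullet(\bm{\gamma},S)$: this must be arranged so that the elements $c_{ji}$ influence the terms of $_k\mathcal{Z}^{+}_{\bullet}$ only through the degrees $\deg(a_i)$, after which the argument reduces to a direct Euler-characteristic computation.
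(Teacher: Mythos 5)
Your argument matches the paper's proof essentially step for step: invoke Proposition \ref{t1} for acyclicity, use additivity of Hilbert series along the finite acyclic complex, observe that by (\ref{Di}) and (\ref{z+}) the terms are direct sums of Koszul cycles $Z_j(\ff)$ shifted only by twists coming from $K_\bullet(\bm{\gamma},S)$ (hence only through the degrees of the $a_i$'s), compute the Hilbert series of the $Z_j$ inductively from the Koszul homologies, and for $(A1)$ pass through $0\to I/\fa\to R/\fa\to R/I\to 0$. The extra bigrading bookkeeping you spell out is exactly what the paper leaves implicit; no new ideas or gaps.
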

\begin{proof} By Proposition \ref{t1}, the standing assumptions on  $I$  imply the acyclicity of $_k\mathcal{Z}^{+}_{\bullet}$. Hence the Hilbert function of  $H_0(_k\mathcal{Z}^{+}_{\bullet})$ is derived from the Hilbert functions of the components of   $_k\mathcal{Z}^{+}_{\bullet}$ which, according to (\ref{Di}) and (\ref{z+}), are just some direct sums of the  Koszul's cycles of $I$ shifted by the twists appearing in the koszul complex $K_{\bullet}(\bm{\gamma},S)$. Since the Hilbert functions of Koszul's cycles are inductively calculated in terms of those of  the Koszul's homologies, the Hilbert function of $H_0(_k\mathcal{Z}^{+}_{\bullet})$ depends on the  Koszul's homologies of $I$ and just the degrees of the generators of $\fa$. In the case where $k=1$ we know the Hilbert function of $I/\fa=Sym_R^1(I/\fa)$ so that we know the Hilbert function of $R/\fa$ by $0\ra I/\fa\ra R/\fa\ra R/I\ra0$. 
\end{proof}


The next important numerical invariant associated to an algebraic or geometric object is the Castelnuovo-Mumford regularity. Here we present an upper bound for the regularity of the disguised $s$-residual intersection $K$ and the regularity of  $Sym_R^k(I/\fa)$ (if $1\leq k\leq s-g+1$). 

Assume that
$ R=\bigoplus _{n \geq 0} R_{n}$ is a
 positively graded *local Noetherian ring of dimension $d$ over a Noetherian local ring $
(R_{0}, \fm_{0})$ set $\fm=\fm_{0}+R_+$. Suppose that $I$ and
$\fa$ are homogeneous ideals of $R$ generated by homogeneous
elements $f_1,\cdots,f_r$ and $a_1,\cdots,a_s$, respectively.
 Let $\deg f_t=i_t$ for all $1 \leq t \leq r$ with $i_1
\geq \cdots \geq i_r$ and $\deg a_{t}=l_{t}$ for $1\leq t \leq
s$. For a graded ideal $\frak{b}$, the sum of the degrees of a
minimal generating set of $\frak{b}$ is denoted by
$\sigma(\frak{b})$.

\begin{Proposition} \label{Pregularity}
With the same notations just introduced above, let $(R,\fm)$ be a  CM $^*$local ring, $I$ an ideal with $\Ht(I)=g \geq 2$ and  $s\geq g$. Suppose that $J=(\fa:I)$ is  an $s$-residual intersection of $I$ and  $K$ is the disguised $s$-residual intersection of $I$ w.r.t $\fa$. Suppose that any of the  following conditions hold for  some $0\leq k\leq s-g+1$,
\begin{itemize}
\item[(i)] $r+k\leq s$ and $I$ satisfies $\SD_1$ condition;
\item[(ii)] $r+k\geq s+1$, $I$ satisfies $\SDC_2$ at level $s-g-k$ and $\depth(Z_i(\ff))\geq d-s+k+1$ for $0\leq i\leq k$.
\end{itemize}
Then if $k\geq 1$, \begin{center}
 $\Reg (Sym_R^k(I/\fa)) \leq \Reg( R) + \Dim(R_0)+\sigma( \fa)-(s-g+1-k)\beg(I/\fa)-s $.
\end{center}
 And if $k=0$,  $\Reg (R/K) \leq \Reg( R) + \Dim(R_0)+\sigma( \fa)-(s-g+1)\beg(I/\fa)-s $.
\end{Proposition}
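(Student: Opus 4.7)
The plan is to extract the regularity bound from the acyclic complex $_k\mathcal{Z}^{+}_{\bullet}$ by carefully estimating the Castelnuovo--Mumford regularity of each of its components. First I observe that both hypotheses (i) and (ii) are strictly stronger than the corresponding ones in Theorem \ref{t2}: the sliding depth index is raised from $\SD$ to $\SD_1$ (respectively from $\SDC_1$ to $\SDC_2$) and the cycle depth is increased by one. Consequently $_k\mathcal{Z}^{+}_{\bullet}$ is acyclic with $H_0$ equal to $Sym_R^k(I/\fa)$ if $k\ge 1$ and to $R/K$ if $k=0$, and moreover each component has depth one larger than what the acyclicity argument alone requires, which is precisely what is needed to convert depth control into a regularity bound.

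Next I will invoke the standard spectral sequence bound: for an acyclic complex $\mathcal{C}_{\bullet}$ of length $s$ of finitely generated graded modules over a $^{*}$local ring, a comparison of the two spectral sequences of the double complex $C^{\bullet}_{\fm}(R)\y \mathcal{C}_{\bullet}$ yields
$$\Reg(H_0(\mathcal{C}_{\bullet}))\ \le\ \max_{0\le i\le s}\{\Reg(\mathcal{C}_i)-i\}.$$
The summand $\Dim(R_0)$ in the claimed bound accounts for the passage from $R_+$-local to $\fm$-local cohomology in the $^{*}$local setting. I then estimate $\Reg({}_k\mathcal{Z}^{+}_{i})$ piece by piece. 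On the right part ($i\le k$), the component $(D_i)_{[k]}$ is a finite direct sum of shifts of cycle modules $Z_j(\ff)$, with internal twists controlled by the degrees $l_1,\ldots,l_s$ coming from $K_{\bullet}(\bm{\gamma},S)$; under $\SD_1$/$\SDC_2$ one bounds $\Reg(Z_j)$ in terms of $\Reg(R)$ and $\Reg(H_j(\ff))$ by combining the short exact sequences
$$0\to Z_{j+1}\to K_{j+1}\to B_j\to 0\quad\text{and}\quad 0\to B_j\to Z_j\to H_j(\ff)\to 0.$$
On the left part ($i>k$), $H^{r}_{\frak{g}}(D_{i+r-1})_{[k]}$ is governed by the inverse-polynomial structure of $H^{r}_{\frak{g}}(S)$ after the $k$-th symmetric strand is extracted; the dominant contribution arises at the leftmost component $i=s$, where $_k\mathcal{Z}^{+}_{s}$ reduces to a strand of $H^{r}_{\frak{g}}(Z_{r-1}\y S)$ further twisted by $-\sigma(\fa)$ internally and $-(r+s-1)$ in the symmetric grading from the top of $K_{\bullet}(\bm{\gamma},S)$.

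With these estimates in hand, the three terms in the target bound have a transparent origin: $\sigma(\fa)$ comes from the internal shifts $l_1+\cdots+l_s$ in the top of $K_{\bullet}(\bm{\gamma},S)$, the $-s$ from the $\max\{\cdot-i\}$-formula evaluated at the leftmost component $i=s$, and $-(s-g+1-k)\beg(I/\fa)$ from the cost of selecting the $k$-th symmetric strand of the inverse-polynomial module $H^{r}_{\frak{g}}(S)$: this forces inverse-monomials whose internal degree lies at least $(s-g+1-k)\beg(I/\fa)$ below the reference value, the gap $(s-g+1-k)$ measuring the distance from $k$ to the top symmetric power $s-g+1$ (consistent with the vanishing of the term at $k=s-g+1$, where $Sym_R^{s-g+1}(I/\fa)\simeq \omega_{R/J}$ by Theorem \ref{ccanonical}). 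The main obstacle will be the bookkeeping across three gradings at once---the symmetric-algebra grading that selects $[\cdot]_{[k]}$, the homological grading of $\mathcal{D}_{\bullet}$, and the internal grading of $R$ that measures regularity---together with verifying that the maximum over $i$ of $\Reg({}_k\mathcal{Z}^{+}_{i})-i$ is indeed attained at $i=s$; this last verification is what forces the strengthening from $\SD$/$\SDC_1$ to $\SD_1$/$\SDC_2$, so that the right-part components do not contribute a larger regularity than the leftmost term.
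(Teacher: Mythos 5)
The key step in your argument — the universal bound $\Reg(H_0(\mathcal{C}_{\bullet}))\le \max_i\{\Reg(\mathcal{C}_i)-i\}$ applied to $_k\mathcal{Z}^{+}_{\bullet}$ — is valid as an inequality but does not lead to the stated formula, because it requires you to control $\Reg({}_k\mathcal{Z}^{+}_i)$ for \emph{every} $i$, and the right-hand components $(D_i)_{[k]}$ are built from the nonfree cycle modules $Z_j(\ff)$. The route you propose for bounding $\Reg(Z_j)$, by splicing the two exact sequences $0\to Z_{j+1}\to K_{j+1}\to B_j\to 0$ and $0\to B_j\to Z_j\to H_j(\ff)\to 0$, inevitably drags $\Reg(H_j(\ff))$ into the bound. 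But $\SD_1$ and $\SDC_2$ are \emph{depth} conditions: they bound the index below which local cohomology vanishes, not the degree above which it vanishes, so they give no control of $\Reg(H_j(\ff))$, and these extra terms do not appear in the target inequality. Heightening $\SD$ to $\SD_1$ does not ``shrink'' regularity the way your last paragraph suggests; it shifts the depth threshold by one. Consequently your step 3 cannot deliver the claimed formula, and the claim that the maximum is attained at $i=s$ is asserted but not established.

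What the paper actually does is different in two essential respects, and both are missing from your proposal. First, it does not use the crude $\max\{\Reg(\mathcal{C}_i)-i\}$ bound at all; instead (mimicking the type computation of Theorem~\ref{ttype}), it uses the depth inequalities $\depth({}_k\mathcal{Z}^{+}_i)\ge d-s+i+1$ for $i<s$ to make the \emph{entire} vertical spectral sequence of $C^{\bullet}_{\fm}\otimes {}_k\mathcal{Z}^{+}_{\bullet}$ vanish in the relevant total degree except for the single corner term, yielding an \emph{inclusion} $H^{d-s}_{\fm}(H_0)\hookrightarrow H^{d}_{\fm}({}_k\mathcal{Z}^{+}_{s})$. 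Since $H_0$ is Cohen--Macaulay of dimension $d-s$ (Theorem~\ref{t2}), this alone pins down $\Reg(H_0)$, and ${}_k\mathcal{Z}^{+}_{s}$ is a direct sum of shifts of $R$, so only $\Reg(R)$, $\Dim(R_0)$ and the twists $\sigma(\fa)$, $-s$, $-(s-g+1-k)\beg(I/\fa)$ appear. No regularity of any $Z_j$ or $H_j(\ff)$ is ever needed. Second, and this is the point the paper flags as ``crucial'': the depth bound $\depth({}_k\mathcal{Z}^{+}_i)\ge d-s+i+1$ fails for $i>s-g+1$ when $g>2$, because the cycles $Z_j$ have depth only $d-g+2$. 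To fix this one must replace the tail of $\mathcal{Z}_{\bullet}(\ff)$ (the components $Z_j$ with $j\ge r-g+1$, which are resolved by the free tail of the Koszul complex) by a free complex before tensoring with $K_{\bullet}(\bm{\gamma},S)$, producing the modified family ${}_k\mathcal{Z}'^{+}_{\bullet}$ whose left part consists of free $S$-modules of depth $d$. Your proposal works directly with ${}_k\mathcal{Z}^{+}_{\bullet}$ and never confronts this obstruction, so even after repairing the regularity estimate issue the argument would still break for $g>2$. (The paper's alternative ``$g=2$ trick'' of killing a regular sequence of length $g-2$ inside $\fa$ only applies under the stronger SCM hypothesis, not under $\SD_1/\SDC_2$.)
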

\begin{proof} The proof of this fact is essentially the same as that of \cite[Theorem 3.6]{Ha} wherein the case of $k=0$  is verified.  The crucial point to repeat that proof is that we have to change the structure of the families $\{_k\mathcal{Z}^{+}_{\bullet}\}$ from the beginning   by substituting the tails with a free complex. Another way to prove this result in the case where $I$ is SCM, may be by applying the "$g=2$" trick in Theorem \ref{ttype}.
\end{proof}

The inequality in the above proposition becomes more interesting in particular cases, for example if $R$ is a polynomial ring over a field and $k=1$ this inequality reads as $$\Reg_R (I/\fa)+ (s-g)\beg(I/\fa) \leq \sigma( \fa)-s .$$ Hence we have a numerical obstruction for $J= (\fa:I)$ to be an $s$-residual intersection.

At the other end of the spectrum we have $k=s-g+1$, this case  corresponds to the canonical module of $R/J$. 
 
 The next proposition improves \cite[Proposition 3.5]{Ha} by removing the $G_s$ condition.

\begin{Proposition}\label{Pcanograded} Suppose that $(R,\fm)$ is a positively graded Gorenstein *local
 ring, over a Noetherian local ring $
(R_{0}, \fm_{0})$, with canonical module $\omega_R=R(b)$ for some integer $b$. Let  $I$ be a homogeneous SCM ideal with $\Ht(I)=g$, $\fa\subset I$  homogeneous and
 $J=(\fa:I)$ be an arithmetic $s$-residual intersection  of $I$.  Then $\omega_{R/J}=Sym_R^{s-g+1}(I/\fa)(b+\sigma(\fa))$.
 
 In particular, if $\Dim(R_0)=0$ then $\Reg (R/J) = \Reg( R) + \sigma( \fa)-\beg(Sym_R^{s-g+1}(I/\fa))-s. $
\end{Proposition}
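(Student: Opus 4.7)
The plan is to upgrade the ungraded isomorphism $\omega_{R/J}\simeq Sym_R^{s-g+1}(I/\fa)$ of Theorem \ref{ccanonical} to a graded one with a computed shift, and then read off the regularity formula. Since Theorem \ref{ccanonical} already identifies $Sym_R^{s-g+1}(I/\fa)$ as a faithful maximal Cohen--Macaulay $R/J$-module of type~$1$, the same argument carried out in the graded *local setting forces $\omega_{R/J}=Sym_R^{s-g+1}(I/\fa)(c)$ for a unique integer $c$, so the whole task is to identify $c=b+\sigma(\fa)$.

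First I would repeat, in the graded category, the two reductions used in the proof of Theorem \ref{ccanonical}. If $g=1$, adjoining an indeterminate $x$ to $R$ produces a graded Gorenstein *local ring with the same $b$ and the same $\sigma(\fa)$, so the statement reduces to its analogue in height~$\geq 2$. To pass from $g\geq 2$ to $g=2$, pick a homogeneous regular sequence $\underline{a}=(a_1,\dots,a_{g-2})\subset\fa$ which is part of a minimal generating set of $\fa$ (possible by Corollary \ref{cunmixed}(1)), with degrees $l_1,\dots,l_{g-2}$. Set $\bar R=R/\underline{a}R$; the Koszul resolution gives $\omega_{\bar R}=\bar R(b+\sigma(\underline{a}))$. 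By \cite[Corollary 1.5]{H}, $\bar I$ is SCM of height $2$ in $\bar R$, $\bar J$ is an arithmetic $(s-g+2)$-residual intersection of $\bar I$, and $Sym^k_{\bar R}(\bar I/\bar\fa)=Sym^k_R(I/\fa)$ with $\sigma(\bar\fa)=\sigma(\fa)-\sigma(\underline{a})$. Granting the $g=2$ case over $\bar R$ then produces
\[
\omega_{R/J}=\omega_{\bar R/\bar J}=Sym^{s-g+1}_{\bar R}(\bar I/\bar\fa)\bigl((b+\sigma(\underline{a}))+\sigma(\bar\fa)\bigr)=Sym^{s-g+1}_R(I/\fa)(b+\sigma(\fa)).
\]

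The heart of the proof is the case $g=2$, where I would work with the complex $_{s-1}\mathcal{Z}^{+}_{\bullet}$. By Theorem \ref{t2} it is acyclic with $H_0=Sym^{s-1}_R(I/\fa)$, and by construction all of its differentials are homogeneous of $R$-degree~$0$. Revisiting the spectral-sequence argument of Theorem \ref{ttype} in the graded setting, the one-dimensional $R/\fm$-space $\Ext^{d-s}_R(R/\fm,Sym^{s-1}_R(I/\fa))$ sits as a graded subspace of $\Ext^d_R(R/\fm,\,_{s-1}\mathcal{Z}^{+}_{s})$. Now $_{s-1}\mathcal{Z}^{+}_{s}=H^{r}_{\frak{g}}(D_{r+s-1})_{[s-1]}$, and $D_{r+s-1}$ is the single tensor product $K_s(\bm{\gamma})\otimes_S(Z_{r-1}\otimes_R S(1-r))$. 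The decisive observation is that in the $R$-grading the top Koszul term equals $K_s(\bm{\gamma})=S(-\sigma(\fa))$, because its generator $\gamma_1\wedge\cdots\wedge\gamma_s$ carries $R$-degree $\sum l_i=\sigma(\fa)$. Taking graded Matlis duals over $R_0$ to extract the canonical module turns this $-\sigma(\fa)$ into $+\sigma(\fa)$, and the further shift by $+b$ comes from $\omega_R=R(b)$, yielding precisely $c=b+\sigma(\fa)$.

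The main obstacle I anticipate is the careful graded bookkeeping in the spectral sequence, since $Z_{r-1}$ is not free and carries its own $R$-grading inherited from the Koszul complex on $\ff$. Fortunately, because the ungraded isomorphism $\omega_{R/J}\simeq Sym^{s-g+1}_R(I/\fa)$ is already available, one only needs to verify the shift at a single socle degree, and the intrinsic $R$-grading of $Z_{r-1}$ contributes identically on both sides and cancels. Once the canonical-module formula is established, the ``in particular'' regularity assertion follows from graded local duality: when $\Dim(R_0)=0$, any CM graded module $M$ satisfies $\Reg(M)=-\beg(\omega_M)+\Dim(M)$, and $\Reg(R)=b+d$. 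Substituting $\omega_{R/J}=Sym^{s-g+1}_R(I/\fa)(b+\sigma(\fa))$ and $\Dim(R/J)=d-s$ gives $\Reg(R/J)=\Reg(R)+\sigma(\fa)-\beg(Sym^{s-g+1}_R(I/\fa))-s$.
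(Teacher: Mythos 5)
Your proposal follows essentially the same route as the paper: reduce to $g=2$ by killing a homogeneous regular sequence inside a minimal generating set of $\fa$ (and handle $g=1$ by adjoining a variable), then rerun the spectral sequence from Theorem \ref{ttype} in the bigraded setting to identify $_{s-1}\mathcal{Z}^{+}_{s}\simeq R(-\sigma(\fa))$ and read off the shift $b+\sigma(\fa)$ via the type-$1$ computation, with the regularity formula following from graded local duality. One small slip: $Z_{r-1}$ \emph{is} free (it equals $R(-\sum i_t)$, since $g\geq 1$), which is precisely why the paper can write $D_{r+s-1}=S(-\sum i_t-\sigma(\fa),\,1-r-s)$ and see the $\sum i_t$ cancel against $\deg(T_1^{-1}\cdots T_r^{-1})$ in $H^r_{\frak{g}}(S)$ --- your instinct that this internal shift cancels is right, but the mechanism is the freeness, not an appeal to ``both sides.''
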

\begin{proof} We may and do assume that $g\geq 2$. First suppose that $g=2$. We keep the notations defined at the beginning of the section $2$ to define  $_k\mathcal{Z}^+_{\bullet}$ complex. Considering $R$ as a subalgebra of $S=R[T_1,\cdots,T_r]$, we write the
degrees of an element $x$ of $R$ as the $2$-tuple $(\Deg x,0)$with the second entry
zero. So that $\deg f_t=(i_t,0)$ for all $1 \leq t \leq r$,
$\deg a_t=(l_t,0)$ for all $1 \leq t \leq s$, $\deg
T_t=(i_t,1)$ for all $1 \leq t \leq r$, and thus $\deg
\bm{\gamma}_t=(l_t,1)$ for all $1 \leq t \leq s$. With these
notations the $\mathcal{Z}$-complex has the following shape
\begin{center}
$\mathcal{Z}_\bullet: 0\rightarrow
Z_{r-1}\bigotimes_{R}S(0,-r+1) \rightarrow \cdots \rightarrow
Z_1\bigotimes_{R}S(0,-1) \rightarrow
Z_0\bigotimes_{R}S\rightarrow 0.$
\end{center}
Consequently,
 \begin{center}
$\mathcal{Z}_{r-1}=R(-\sum_{t=1}^{r}
i_t,0)\bigotimes_{R}S(0,-r+1)$,
\end{center}
and, taking into account that $a_1,\cdots,a_s$ is a minimal generating
set of $\fa$ by Corollary \ref{cunmixed},
\begin{center}
$D_{r+s-1}=S(-\sum_{t=1}^{r}
i_t-\sigma(\fa),-s-r+1))$.
\end{center}
It then follows that $_{(s-1)}\mathcal{Z}^+_{s}=H^r_{\frak{g}}(D_{r+s-1})_{[\ast,s-1]}$ is isomorphic to
\begin{equation}\label{e(r,s)}
H^r_{\frak{g}}(S)(-\sum_{t=1}^{r}i_t-\sigma(\fa),0)_{[\ast,-r]}\simeq (RT_1^{-1}\cdots T_r^{-1}(-\sum_{t=1}^{r}i_t))(-\sigma(\fa))\simeq R(-\sigma(\fa)).
\end{equation}
Notice that $\Deg(T_j^{-1})=(-i_j,-1)$.

We next turn to the proof of Theorem \ref{ttype} and consider the spectral sequences therein for $k=s-g+1=s-1$. We have $^1 \E^{-s,-d}_{ver}=\Ext_R^{d}(R/\fm,~
_{(s-1)}\mathcal{Z}^{+}_{s})$ which is isomorphic to $\Ext^d_R(R/\fm,R(b))(-b-\sigma(\fa))$ by (\ref{e(r,s)}).
 The latter is in turn  homogeneously isomorphic to $R/\fm(-b-\sigma(\fa))$, since $R$ is Gorenstein. The inclusion  $\Ext_R^{d-s}(R/\fm, Sym_R^{s-g+1}(I/\fa))\simeq ~~^\infty
\E^{0,-(d-s)}_{hor}\hookrightarrow~~ ^1 \E^{-s,-d}_{ver}=\Ext_R^{d}(R/\fm,~
_{s-1}\mathcal{Z}^{+}_{s})$ is indeed an isomorphism, since the latter is a vector space of dimension one.
In the graded case all of the homomorphisms in the proof of  Theorem \ref{ttype} are homogeneous,
 therefore the above isomorphism
 implies that $\Ext_R^{d-s}(R/\fm, Sym_R^{s-g+1}(I/\fa))\simeq R/\fm(-b-\sigma(\fa)).$
 Since we already know that in the local case
 $\omega_{R/J}=Sym_R^{s-2+1}(I/\fa)$ by Corollary \ref{ccanonical}, hence in the graded case
   we have  $\omega_{R/J}=Sym_R^{s-2+1}(I/\fa)(b+\sigma(\fa))$.

Now suppose that $g\geq 2$ and let $\aa=a_1,\cdots,a_{g-2}$ be a regular sequence in
$\fa$ which is a part of its minimal generating set. Then $\omega_{R/\aa}=\frac{R}{\aa}(b+\sigma(\aa))$
 and moreover we go back to the case where $g=2$. We then have
 $$\omega_{R/J}=Sym_{R/\aa}^{(s-g+2)-2+1}\left(\frac{I/\aa}{\fa/\aa}\right)((b+\sigma(\aa))+\sigma(\fa/\aa))=Sym_R^{s-g+1}(I/\fa)(b+\sigma(\fa)).$$

\end{proof}


Finally, one may ask whether under the $G_s$ condition our methods can contribute more information about the structure of residual intersections
 than what is known so far.
The next result shows that in the presence of the $G_s$ condition the complex $_0\mathcal{Z}^{+}_{\bullet}$ approximates $R/J$ for any (algebraic)
 $s$-residual intersection $J$. In particular   the above facts about the regularity and the Hilbert function hold for $R/J$.

 \begin{Proposition}\label{PGs}Let  $(R,\fm)$ be a  Gorenstein local ring and suppose that $I$ is  SCM    evenly linked to an ideal which satisfies $G_s$.
 Then  the disguised $s$-residual intersections of $I$  with respect to an ideal $\fa$, $K$, is the same as the algebraic residual intersection $J=\fa:I$.

 \end{Proposition}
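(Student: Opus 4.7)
The strategy is to combine the Cohen--Macaulayness of $R/K$ provided by the new Theorem~\ref{t2} with the Huneke--Ulrich-type Cohen--Macaulayness of $R/J$ available for ideals evenly linked to a $G_s$-ideal, and then to identify the two unmixed ideals locally at their common associated primes using the arithmetic case of \cite[Theorem~2.11]{Ha}.

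First, hypothesis $(v)$ of Theorem~\ref{t2} with $k=0$ gives $R/K$ Cohen--Macaulay of dimension $d-s$, and the argument of Corollary~\ref{cunmixed}(2) then shows that $K$ is unmixed of height $s$. Parallel to this, the assumption that $I$ is SCM and evenly linked to an ideal with $G_s$ places us within the classical Huneke--Ulrich framework \cite{HU}: both SCM and $G_s$ propagate through the linking complete intersections, so the well-known CM theorems for algebraic residual intersections of $G_s$-ideals apply, yielding that $R/J$ is Cohen--Macaulay of dimension $d-s$ and $J$ is unmixed of height $s$.

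Combining $K\subseteq J$ and $\sqrt{K}=\sqrt{J}$ (from Definition~\ref{DDisguised}) with the unmixedness of both ideals of the same height, one obtains $\Ass(R/K)=\Ass(R/J)=\mathrm{Min}(K)$, a set of height-$s$ primes. Since any non-zero submodule of the Cohen--Macaulay module $R/K$ has its associated primes among those of $R/K$, it suffices to check $K_\fp=J_\fp$ for every $\fp$ in this common set of associated primes; any failure would exhibit a prime $\fq\in\Ass(J/K)\subseteq\Ass(R/K)$ with $K_\fq\subsetneq J_\fq$, a contradiction.

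The local check is the core of the argument. If $\fp\not\supseteq I$, then $I_\fp=R_\fp$ and both $K_\fp$ and $J_\fp$ reduce to $\fa_\fp$ trivially. The essential case is $\fp\supseteq I+J$ of height $s$, where the task is to verify that the residual is \textit{arithmetic} at $\fp$, namely $\mu_{R_\fp}\bigl((I/\fa)_\fp\bigr)\le 1$; once this is established, \cite[Theorem~2.11]{Ha} gives $K_\fp=J_\fp$ and completes the proof. I expect this arithmeticity to be the main obstacle, and the precise place where ``older and newer facts'' merge: one must propagate the $G_s$ bound on the local number of generators from the evenly-linked ideal back to $I$ at the height-$s$ primes containing $I+J$, and then combine with the fact that $\fa_\fp$ is minimally generated by $s$ elements (Corollary~\ref{cunmixed}(1) applied at $\fp$) to force $(I/\fa)_\fp$ to be cyclic.
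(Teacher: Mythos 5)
Your proposal takes a genuinely different route from the paper, but it has a real gap at exactly the step you flag as the obstacle.

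The paper does not attempt to verify arithmeticity of $J=\fa:I$ at height-$s$ primes. Instead it uses the Huneke--Ulrich deformation theorem \cite[Theorem~5.3]{HU}: since $I$ is SCM and evenly linked to a $G_s$-ideal, there is a \emph{deformation} $(R',I')$ of $(R,I)$ (in a larger ring $R'$) with $I'$ both SCM and $G_s$. One then passes to the generic residual $\tilde J=(\bm{\alpha}):I'\tilde R$ over $\tilde R=R'[x_{ij}]_{(\fm+(x_{ij}-s_{ij}))}$, which by \cite[Lemma~3.2]{HU} is \emph{geometric}, hence trivially arithmetic, so $\tilde K=\tilde J$. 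Finally one specializes back: \cite[Proposition~2.13]{Ha} says the disguised residual specializes correctly ($R/\pi(\tilde K)=R/K$), while \cite[Theorem~4.7]{HU} says the algebraic residual specializes correctly ($\pi(\tilde J)=J$), giving $K=J$.

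The gap in your plan is the sentence ``one must propagate the $G_s$ bound on the local number of generators from the evenly-linked ideal back to $I$.'' The $G_s$ condition does \emph{not} propagate through a link, and the proposition is specifically designed for ideals $I$ that do not themselves satisfy $G_s$; if $G_s$ transferred to $I$ you would not need the ``evenly linked'' hypothesis at all, as the classical $G_s$-results of \cite{HU} would apply directly. Consequently there is no reason to expect $\mu_{R_\fp}((I/\fa)_\fp)\le 1$ at height-$s$ primes $\fp\supseteq I+J$, so the invocation of the arithmetic case of \cite[Theorem~2.11]{Ha} is not available. The correct mechanism is deformation, not linkage propagation: the deformed ideal $I'$ lives in a different ring $R'$, the generic residual there is geometric, and both sides of the equality descend by the cited specialization results. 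Your first two paragraphs (Cohen--Macaulayness and unmixedness of $R/K$ via Theorem~\ref{t2} and Corollary~\ref{cunmixed}, and the reduction to a local check at common associated primes) are sound, but the local identification needs the deformation-and-specialization argument rather than a direct arithmeticity check.
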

 \begin{proof}   It is shown in \cite[Theorem 5.3]{HU}
that under the above assumptions on $I$ there exists a deformation $(R',I')$ of $(R,I)$ such that $I'$ is SCM and $G_s$.
Let $I=(f_1,\cdots,f_r)$,  $\fa=(a_1,\cdots,a_s)$ and suppose that $a_i=\sum_{j=1}^rc_{ij}f_j$. Let $s_{ij}$ be a lifting of $c_{ij}$ to $R'$ and
$f'_{i}$ be a lifting of $f_i$ to $R'$.
 Set $X=(x_{ij})$ to be an $s\times r$ matrix of invariants, $\bm{\alpha}:=(\A_1,\cdots,\A_s)=X\cdot(f'_1,\cdots,f'_r)^t$.
Let $\tilde{R}=R'[x_{ij}]_{(\fm+ (x_{ij}-s_{ij}))}$ and  $\tilde{J}:=(\A_1,\cdots,\A_s):_{\tilde{R}}I'\tilde{R}$.
 By  \cite[Lemma 3.2]{HU}  $\tilde{J}$ is a geometric $s$-residual intersection of $I'\td{R}$.
We construct the complex $_0\mathcal{Z}^{+}_{\bullet}((\A_1,\cdots,\A_s);(f'_1,\cdots,f'_s))$ in $\td{R}$
and set $\td{R}/\td{K}=H_0(_0\mathcal{Z}^{+}_{\bullet}((\A_1,\cdots,\A_s);(f'_1,\cdots,f'_s)))$. Since $\tilde{J}$  is a geometric residual, $\tilde{J}=\tilde{K}$. 

 Assume that  $\pi$ is  the projection from $\td{R}$ to $R$ which sends $x_{ij}$ to $c_{ij}$.
 By  \cite[Proposition 2.13]{Ha}, $R/\pi(\td{K})=H_0(_0\mathcal{Z}^{+}_{\bullet})$
and by \cite[Theorem 4.7]{HU}, $\pi(\td{J})=J$. Therefore $R/K=R/\pi(\td{K})=R/\pi(\td{J})=R/J$.
 \end{proof}


\section{Arithmetic Residual Intersections}\label{Arith}
In this section we scrutinize the structure of the $_0\mathcal{Z}^{+}_{\bullet}$ complex in the case where $I/\fa$ is cyclic. We find that the homologies of $_0\mathcal{Z}^{+}_{\bullet}$ determines the uniform annihilator of non-zero Koszul homologies. The importance of this fact is that although the Koszul complex is an old object, not much is known about the annihilator of higher homologies. Besides the rigidity and differential graded algebra structure of the homologies, there is an interesting paper of Corso, Huneke, Katz and Vasconcelos \cite{CHKV} wherein the authors make efforts to find out "whether the annihilators of non-zero koszul homology modules of an unmixed ideal is contained in the integral closure of that ideal". However, it seems that the situation of mixed ideals is more involved, for example  for $\fa=(x^2-xy,y^2-xy,z^2-zw,w^2-zw)$, (taken from \textit{loc. cit.}), $\Ann(H_1)=\bar{\fa}$,  the integral closure, and $\Ann(H_2)=\sqrt\fa$. This example shows that the intersection of the annihilators has a better structure than the union. This is an interesting example for us since $(\fa:\bar{\fa})$ is a $4$-residual intersection with $\bar{\fa}/\fa$ cyclic  and $\bar{\fa}$  satisfies $\SD$.

Since a part of our results works well with arithmetic $s$-residual intersections, it  is worth mentioning some general properties of these ideals.
\begin{Proposition}\label{Parith} Let $R$ be a Noetherian ring, $\fa= (a_1,\cdots,a_s)\subset I$ be  ideals of $R$ and $J=\fa:I$ be an $s$-residual intersection.  Then
\begin{itemize}
 \item[(i)] $J$ is an arithmetic $s$-residual intersection if and only if $\Ht(0:\bigwedge^{2}(I/\fa))\geq s+1$.
 \item[(ii)] If $2$ is unit in $R$ then $I+J\subseteq (0:\bigwedge^{2}(I/\fa))$.
 \item[(iii)]The followings statements are equivalent
 \begin{enumerate}
  \item For all $i\leq s$, an $i$-residual intersection of $I$ exists.
  \item For any prime ideal $\fp\supset I$ such that $\Ht(\fp)\leq s-1$, $\mu(I_{\fp})\leq \Ht(\fp)+1$.
 \item There exist $a_1,\cdots,a_s\in I$ such that for $i\leq s-1$, $J_i=(a_1,\cdots,a_i):I$ is an arithmetic $i$-residual intersection and  $J=(a_1,\cdots,a_s):I$ is an (algebraic) $s$-residual intersection.
 \end{enumerate}
 \item[(iv)] $J$ is an arithmetic $s$-residual intersection if and only if there exists an element $b\in I$ such that $\Ht((\fa,b):I)\geq s+1$.
\end{itemize}
\end{Proposition}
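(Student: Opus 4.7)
I will treat the four parts in the order (i), (ii), (iv), (iii), since (iii) ends up as a formal induction built on (iv).

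For (i), the key fact is that for a finitely generated module $M$ over a Noetherian ring, $\bigwedge^2 M_\fp = 0$ iff $M_\fp$ is cyclic, and $\Supp(\bigwedge^2 M) = V(0:\bigwedge^2 M)$. First I would verify that $\Supp(\bigwedge^2(I/\fa)) \subseteq V(I+J)$: if $\fp \not\supseteq J$ then $J_\fp = R_\fp$ forces $I_\fp = \fa_\fp$ and $(I/\fa)_\fp = 0$; while if $\fp \not\supseteq I$ then $I_\fp = R_\fp$, so $(I/\fa)_\fp = R_\fp/\fa_\fp$ is already cyclic. Consequently, $\Ht(0:\bigwedge^2(I/\fa)) \geq s+1$ is precisely the statement that no prime of height $\leq s$ lies in the support, which by what was just said amounts to the arithmetic condition.

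For (ii), the inclusion $J \subseteq 0:\bigwedge^2(I/\fa)$ is immediate from $J \cdot (I/\fa) = 0$. For $I$, the idea is to exploit the two valid readings $\overline{xy} = x \cdot \bar y = y \cdot \bar x$ inside $I/\fa$, which yield the transposition identity $x(\bar y \wedge \bar z) = y(\bar x \wedge \bar z)$ for every $x,y,z \in I$. Alternating this with the antisymmetry $\bar u \wedge \bar v = -\bar v \wedge \bar u$ gives
\[
x(\bar y \wedge \bar z) = y(\bar x \wedge \bar z) = -y(\bar z \wedge \bar x) = -z(\bar y \wedge \bar x) = z(\bar x \wedge \bar y) = x(\bar z \wedge \bar y) = -x(\bar y \wedge \bar z),
\]
so $2x(\bar y \wedge \bar z) = 0$. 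Inverting $2$ then gives $I \subseteq 0:\bigwedge^2(I/\fa)$.

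For (iv), the direction $(\Leftarrow)$ is direct: if $b \in I$ satisfies $\Ht((\fa,b):I) \geq s+1$, then for each prime $\fp \supseteq I+J$ of height $\leq s$ one finds $x \in (\fa,b):I$ with $x \notin \fp$, and $xI \subseteq (\fa,b)$ shows that every element of $I$ is an $R_\fp$-multiple of $b$ modulo $\fa$. For $(\Rightarrow)$, the primes of height $\leq s$ containing $J$ are, since $\Ht J \geq s$, finitely many incomparable height-$s$ minimal primes $\fp_1, \ldots, \fp_N$ of $J$. For each $\fp_j$ I would pick $b_j \in I$ whose image generates the cyclic module $(I/\fa)_{\fp_j}$: if $\fp_j \supseteq I$ such a $b_j$ exists by the arithmetic hypothesis, and if $\fp_j \not\supseteq I$ then any $b_j \in I \setminus \fp_j$ works since $(I/\fa)_{\fp_j}=R_{\fp_j}/\fa_{\fp_j}$. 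By classical prime avoidance on the incomparable family, choose $c_j \in \bigcap_{i \neq j} \fp_i$ with $c_j \notin \fp_j$, and set $b = \sum_j c_j b_j \in I$. A Nakayama check at each $\fp_j$ shows the image of $b$ generates $(I/\fa)_{\fp_j}$: the terms with $i \neq j$ land in $\fp_j (I/\fa)_{\fp_j}$, while $c_j b_j$ projects to a nonzero element of the one-dimensional $k(\fp_j)$-space $(I/\fa)_{\fp_j}/\fp_j(I/\fa)_{\fp_j}$. This forces $((\fa,b):I)_\fq = R_\fq$ at every $\fq$ of height $\leq s$ (the case $\fq \not\supseteq J$ is automatic, since $J \subseteq (\fa,b):I$), giving $\Ht((\fa,b):I) \geq s+1$.

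Finally, (iii) is a bookkeeping consequence. $(1) \Rightarrow (2)$: at a prime $\fp \supseteq I$ of height $h \leq s-1$, an $(h+1)$-residual intersection $\fb:I$ cannot be contained in $\fp$, so $I_\fp = \fb_\fp$ and $\mu(I_\fp) \leq h+1$. For $(2) \Rightarrow (3)$ I would build $a_1, \ldots, a_s$ inductively: given $a_1, \ldots, a_{i-1}$ with $J_{i-1}$ arithmetic, (iv) produces $a_i \in I$ with $\Ht(J_i) \geq i+1$, and the bound $\mu(I_\fp) \leq \Ht(\fp)+1$ for $\fp \supseteq I$ of height $\leq i-1$ is exactly what makes the simultaneous prime-avoidance construction in (iv) still function, yielding that $J_i$ is itself arithmetic and letting the induction continue. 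At $i = s$ one only requires $\Ht(J_s) \geq s$, giving an algebraic $s$-residual; $(3) \Rightarrow (1)$ is immediate. The main technical obstacle throughout is the simultaneous prime-avoidance in (iv) --- arranging one element $b$ that works at every bad prime at once --- and once that mechanism is in hand, (i), (ii), and (iii) drop out relatively quickly.
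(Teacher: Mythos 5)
Your arguments for (i), (ii) and (iv) are correct and self-contained, where the paper simply defers to references: for (i) it invokes the characterization of the arithmetic condition via $\Ht(\mathrm{Fitt}_j(I/\fa))\geq s+j$ for $j=0,1$ (Eisenbud, Prop.~20.6) together with the fact that $\mathrm{Fitt}_j$ and $\Ann(\bigwedge^{j+1})$ have the same radical, which is precisely the content you re-derive directly in the language of supports of exterior powers; (ii) the paper labels ``straightforward'' and you supply the transposition--alternation computation giving $2\,x(\bar y\wedge\bar z)=0$; (iv) the paper cites Ulrich's Lemma~1.3, and your prime-avoidance plus Nakayama argument is exactly the unfolding of that lemma (the observation that the bad primes are the finitely many height-$s$ minimal primes of $J$ is the key point and you state it correctly). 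Your route is therefore not genuinely different in content, but it does make the proof self-contained.

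The one place you should tighten is $(2)\Rightarrow(3)$ in (iii). As stated it contains an off-by-one: applying (iv) to the arithmetic $(i-1)$-residual $J_{i-1}$ yields $a_i\in I$ with $\Ht(J_i)\geq i$, not $\geq i+1$, so at this stage $J_i$ is merely an \emph{algebraic} $i$-residual intersection. To continue the induction you still have to verify that $J_i$ is \emph{arithmetic} when $i\leq s-1$, and this does not come for free from (iv); one must run a sharpened prime-avoidance that chooses $a_i$ simultaneously (a) to avoid the finitely many height-$(i-1)$ minimal primes of $J_{i-1}$ so that $\Ht(J_i)\geq i$, and (b) to stay linearly independent from $a_1,\dots,a_{i-1}$ in $I_\fp/\fp I_\fp$ at the finitely many relevant height-$i$ primes $\fp\supseteq I$, so that the bound $\mu(I_\fp)\leq i+1$ from (2) forces $\mu\bigl((I/(a_1,\dots,a_i))_\fp\bigr)\leq 1$. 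Your phrase that the bound ``is exactly what makes the simultaneous prime-avoidance construction in (iv) still function'' gestures at this, but the inductive invariant and the set of primes being avoided at each step need to be spelled out (this is precisely the content of Ulrich's Lemma~1.4, which the paper cites for this part). The implications $(1)\Rightarrow(2)$ and $(3)\Rightarrow(1)$ you give are fine.
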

\begin{proof} (i). $J=\fa:I$ is an arithmetic $s$-residual if and only if $\Ht({\rm Fitt}_j(I/\fa))\geq s+j$ for $j=0,1$ \cite[Proposition 20.6]{Eis}. Moreover ${\rm Fitt}_j(I/\fa)$ and the  annihilator of $\bigwedge^{j+1}(I/\fa)$ have the same radical \cite[Exercise 20.10]{Eis} which yields the assertion.
(ii) is  straightforward.
(iii) goes along the same line as the proof of \cite[Lemma 1.4]{U}. (iv) follows from \cite[Lemma 1.3]{U}.
\end{proof}

The concept of $s$-parsimonious ideals first appeared in \cite{CEU}. It is shown  that  weakly $s$-residually $S_2$ ideals which satisfy $G_s$ are  $s$-parsimonious \cite[3.1]{CEU}. However if $I$ is an ideal which satisfies the condition of Corollary \ref{cunmixed} then it follows from  \cite[3.3 (a)]{CEU} that $I$ is $s$-parsimonious. So that we have
\begin{Remark}Let $(R,\fm)$ be  a  CM local ring of dimension $d$, $I$ an ideal such that  $\depth(R/I)\geq d-s$ and that $I$ satisfies any of the conditions  in Theorem \ref{t2} for $k=1$. Then for any arithmetic $s$-residual intersection $J=(\fa:I)$, there exists an element $b\in I$ such that $J=(\fa:b)$.
\end{Remark}

This remark is another evidence to study the   cases where $I/\fa$ is cyclic.

To exploit the structure of $H_{\bullet}(_0\mathcal{Z}^{+}_{\bullet}(\fa;\fa,b))$, the following lemma is crucial.

\begin{Lemma}\label{lr2} Let $\fa=(a_1,\cdots,a_s)$, $I=(\ff)=(b,a_1,\cdots,a_s)$ and consider the complex $\mathcal{D}_{\bullet}$
defined in (\ref{Di}). Let $Z'_{\bullet}$, $B'_{\bullet}$ and $H'_{\bullet}$  be the cycles, boundaries and homologies of the Koszul complex
$K_{\bullet}(a_1,\cdots,a_s)$. Put $\widetilde{B_{i}}=(B'_{i}:_{Z'_i}b)$ and $S=R[T_0,T_1,\cdots,T_s]$ with standard grading. Then
\begin{itemize}
\item[(i)] $$H_i(\mathcal{D}_{\bullet})\cong \frac{(Z'_{i}\y_R R[T_0])(-i)}{(\widetilde{B_{i}}\y_R T_0R[T_0])](-i)},$$
in particular
$$(H_{i}(\mathcal{D}_{\bullet})_{T_0})_{[0]}\cong bH_i(a_1,\cdots,a_s)~~\text{for all~~} i.$$
\item[(ii)] If $R$ is a graded ring and $b, a_1,\cdots,a_s$ are homogeneous then we have a homogeneous isomorphism
$$(H_{i}(\mathcal{D}_{\bullet})_{T_0})_{[0]}\cong bH_i(a_1,\cdots,a_s)((i+1)\deg(b))~~\text{for all~~} i.$$
\end{itemize}
\end{Lemma}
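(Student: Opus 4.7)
My plan is to compute $H_\ast(\mathcal{D}_\bullet)$ by running the spectral sequence of the bicomplex $K_\bullet(\bm{\gamma},S)\otimes_S\mathcal{Z}_\bullet(\ff)$ that takes vertical (Koszul) homology first. Here $f_0=b$ and $f_j=a_j$ for $j\geq 1$; the trivial expression $a_j=a_j$ forces $c_{ij}=\delta_{ij}$, so $\gamma_i=T_i$ for $i=1,\dots,s$. Thus $K_\bullet(\bm{\gamma},S)$ is the Koszul complex on the regular sequence $T_1,\dots,T_s$ in $S=R[T_0,\dots,T_s]$, and this sequence remains regular on each column $Z_j(\ff)\otimes_R S$ by flat base change from $R[T_0]$. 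Hence the vertical spectral collapses onto a single nonzero row and
\[
H_i(\mathcal{D}_\bullet)\cong H_i\!\left(\mathcal{Z}_\bullet(\ff)/(T_1,\dots,T_s)\mathcal{Z}_\bullet(\ff)\right),
\]
a complex whose differential is the $T_0$-part of $\partial_T$.

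Next I will identify this differential explicitly. Using $K_\bullet(\ff,R)=K_\bullet(\aa,R)\otimes_R K_\bullet(b,R)$, every $z\in Z_j(\ff)$ writes uniquely as $z=z'+e_0\wedge z''$ with $z'\in K_j(\aa)$, $z''\in Z'_{j-1}$, subject to $\partial_a z'=-bz''$. Since $\partial_T(e_0)=T_0$ while $\partial_T(e_i)\equiv 0\pmod{(T_1,\dots,T_s)}$ for $i\geq 1$, the induced differential is $z'+e_0\wedge z''\mapsto T_0 z''$. The kernel in degree $j$ forces $z''=0$ (as $T_0$ is a non-zero-divisor on the free-module envelope of $Z_{j-1}(\ff)\otimes R[T_0]$) and the remaining cycle condition gives $z'\in Z'_j$, so the kernel is $Z'_j\otimes R[T_0]$. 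The image in degree $j$ is spanned by $T_0 z''$ with $z''\in Z'_j$ admitting $z'\in K_{j+1}(\aa)$ such that $\partial_a z'=-bz''$, i.e.\ $bz''\in B'_j$, which is exactly $\widetilde{B_j}\otimes T_0 R[T_0]$. Carrying through the shift $(-i)$ already present in $\mathcal{Z}_\bullet(\ff)$ delivers the first formula of~(i).

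For the second statement in~(i), localizing at $T_0$ absorbs $T_0R[T_0]$ into $R[T_0,T_0^{-1}]$, so
\[
H_i(\mathcal{D}_\bullet)_{T_0}\cong \bigl((Z'_i/\widetilde{B_i})\otimes_R R[T_0,T_0^{-1}]\bigr)(-i),
\]
and taking the $T$-degree-$0$ part selects a single copy of $Z'_i/\widetilde{B_i}$. Finally the map $Z'_i/\widetilde{B_i}\xrightarrow{\cdot b}H_i(\aa)$ sending $z\mapsto bz+B'_i$ is well defined, injective by the very definition $\widetilde{B_i}=(B'_i:_{Z'_i}b)$, and has image $bH_i(\aa)$; hence $Z'_i/\widetilde{B_i}\cong bH_i(\aa)$. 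Part~(ii) then follows by tracking the $R$-grading: the variable $T_0$ inherits $R$-degree $\deg(b)$, so $T_0^{i}$ contributes an $R$-shift of $i\deg(b)$, while the multiplication-by-$b$ isomorphism contributes a further $\deg(b)$, for a total of $(i+1)\deg(b)$.

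I expect the main obstacle to be the precise identification of the induced differential after the Koszul spectral sequence collapses, together with the bigrading bookkeeping needed for~(ii); once those are pinned down, the computation of $\ker$ and $\operatorname{im}$ and the passage to $bH_i(\aa)$ are essentially formal.
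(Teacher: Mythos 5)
Your proposal follows essentially the same route as the paper's proof: you take vertical (Koszul) homology first to collapse the spectral sequence onto $H_\bullet(\mathcal{Z}_\bullet(\ff)/(T_1,\dots,T_s))$, identify the induced differential as $z'+e_0\wedge z''\mapsto T_0 z''$ via the decomposition $K_\bullet(\ff)=K_\bullet(b)\otimes K_\bullet(\aa)$, and then read off the kernel $Z'_i\otimes R[T_0]$ and the image $\widetilde{B_i}\otimes T_0 R[T_0]$ exactly as the paper does in its two-step computation, finishing with the localization at $T_0$ and the isomorphism $Z'_i/\widetilde{B_i}\cong bH_i(\aa)$. The only differences are notational (you wedge $e_0$ on the left rather than on the right) and a slightly more explicit statement of the tensor decomposition of the Koszul complex; the substance and order of the argument, including the grading bookkeeping for (ii), coincide with the paper's.
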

\begin{proof} To compute the homology modules of $\mathcal{D}_{\bullet}$, we consider the two spectral sequences arising from its double complex structure $(\ref{Di})$. We put this double complex in the second quadrant and locate $K_0(\bm{\gamma})\y_S\mathcal{Z}_0(\ff)$ in the corner.

 Set $S=R[T_0,T_1,\cdots,T_s]$ where $T_0$ is corresponds to $b$ and $T_i$ to $a_i$. Then  $(\gamma_1\cdots\gamma_s)=(T_1\cdots T_s)$   is a regular sequence on $\mathcal{Z}_i=Z_i[T_0,T_1,...,T_s](-i)$
 with $Z_{i}=Z_{i}(b,a_1,\cdots,a_s)$.
 We then have $^1\E^{-i,j}_{ver}=H_j(K_{\bullet}(\ag)\y \mathcal{Z}_i)=0$ for all $j\geq 1$ and all $i$. Also $ (S/(T_1,T_2,...,T_s))\y_S\mathcal{Z}_i=R[T_0]\y_R Z_{i}$ in which we consider $R[T_0]$ as an $S$-module by trivial  multiplication $T_iR[T_0]=0$, $1\leq i\leq s$.
  It then  follows that $^1E_{ver}^{0,\bullet}$ is the complex
\begin{equation}\label{Ever}
\begin{array}{cccccc}
0\ra Z_{s+1}\y_R R[T_0](-s-1)\xrightarrow{\partial_{s+1}} \cdots\ra Z_{1}\y_R R[T_0](-1) \xrightarrow{\partial_1} Z_{0}\y_R R[T_0]\ra 0
\end{array}
 \end{equation}
wherein the differentials are induced by  those in $\mathcal{Z}_{\bullet}(\ff)$. As well,
\begin{equation}\label{Dconverge}
H_{i}(^1E_{ver}^{0,\bullet})=H_{i}(\mathcal{Z}_{\bullet}\y_S (S/(T_1,T_2,...,T_s)))(-i)
\end{equation}

 Considering the convergence of the spectral sequences to the
homology module of the total complex $\mathcal{D}_{\bullet}$, we have $H_i(\mathcal{D}_{\bullet})=H_{i}(^1E_{ver}^{0,\bullet})= \Ker(\partial_i)/\Image(\partial_{i+1})$. We compute this homology in two steps.

{\bf Step 1 $\Ker(\partial_i)$:} Let $z_i=( \sum_{j} r_{j}e_{j_1}\wedge e_{j_2}\wedge ...\wedge e_{j_i})\omega (T_0)\in Z_i[T_0](-i)$.
We study two cases: First, if $j_1\neq 0$ for every $j$ which appears in the above presentation of $z_i$, in this case $z_i\in Z'_{i}[T_0]$
and moreover
$$\partial_{i}(z_i)= \sum_{j} r_{j}( \sum_{l=1}^{i}e_{j_1}\wedge ...\wedge e_{j_{l-1}}\wedge e_{j_{l+1}}\wedge...\wedge e_{j_i}T_l\omega (T_0))=0$$
 where the last vanishing is due to the fact that $T_iR[T_0]=0$ for all $1\leq i\leq s$. It follows that $z_i\in \Ker(\partial_i)$.

Second, we show that if $j_1=0$ for some $j$ that appears in $z_i$ then $z_i\not\in \Ker(\partial_i)$.  Writing $z_i=z'_i+z^{''}_i\wedge e_0$, where $z^{'}_i$ and  $z_i^{''}$ do not involve $e_0$, one has
$$\partial_i(z_i)=\partial_i(z'_i)+\partial_{i-1}(z^{''}_i)\wedge e_0+(-1)^{i+1}T_0z^{''}_i$$ by the  DG-algebra structure of $(\mathcal{Z}_{\bullet},\partial'_{\bullet})$. Since $z^{'}_i$ and  $z_i^{''}$ do not involve $e_0$, it follows that $\partial_{i}(z^{'}_i)=0$ and $\partial_{i-1}(z^{''}_i)=0$ by the multiplication on $R[T_0]$, thus $\partial_i(z_i)=(-1)^{i+1}T_0z^{''}_i\neq 0$.
Therefore $\ker (\partial_i)=Z_i(a_1,\cdots,a_s)[T_0](-i)=Z'_i[T_0](-i)$.
\medskip

{\bf Step 2 $\Image (\partial_{i+1})$:} To compute the image, we consider an arbitrary element
 $$z_{i+1}\omega (T_0)=( \sum_{j} r_{j}e_{j_1}\wedge e_{j_2}\wedge ...\wedge e_{j_{i+1}})\omega (T_0)\in Z_{i+1}[T_0](-i-1)$$  and write
 $z_{i+1}=z'_{i+1}+z^{''}_{i+1}\wedge e_0$, where $z^{'}_{i+1}$ and $z_{i+1}^{''}$ do not involve  $e_0$. Recall  that $Z_{i+1}=Z_{i+1}(\ff)$.  Consider the Koszul complex $(K_{\bullet}(\ff),d_{\bullet})$. We have
$$0=d(z_{i+1})=d(z'_{i+1})+d(z^{''}_{i+1})\wedge e_0+(-1)^{i}bz^{''}_{i+1}.$$
 Since $z^{'}_{i+1}$ and  $z_{i+1}^{''}$ do not involve $e_0$, the above equation implies that $d(z^{''}_{i+1})=0$ and $bz^{''}_{i+1}=d((-1)^{i+1}z'_{i+1})$; so that $z^{''}_{i+1}\in (B'_{i}:_{Z'_i}b)$.
 
 Conversely, for any $z^{''}_{i+1}\in (B'_{i}:_{Z'_i}b)$, $bz^{''}_{i+1}=d((-1)^{i+1}z'_{i+1})$ for some $z'_{i+1}\in K'_{i+1}$, hence $z_{i+1}=(z'_{i+1}+z^{''}_{i+1}\wedge e_0)\in Z_{i+1}$.

On the other hand $\partial(z_{i+1})=\partial (z'_{i+1})+\partial( z^{''}_{i+1}\wedge e_0)=0+\partial( z^{''}_{i+1}\wedge e_0)=(-1)^{i}T_0z^{''}_{i+1}$.

It follows that $\Image (\partial_{i+1})=(T_0\widetilde{B_{i}}[T_0])(-i) $.  Finally the above two steps show that
$$H_i(\mathcal{D}_{\bullet})=\frac{(Z'_{i}\y_R R[T_0])(-i)}{(\widetilde{B_{i}}\y_R T_0R[T_0])(-i)}.$$

To see the last assertion, just notice that 
\begin{center}$((Z'_{i}[T_0]_{T_0})(-i))_{[0]}=Z'_{i}[T_0,T_{0}^{-1}]_{[-i]}=T_0^{-i}Z'_i$\quad  and \quad
$((\widetilde{B_{i}}T_0[T_0]_{T_0})(-i))_{[0]}=T_0^{-i}\widetilde{B_{i}}$.
\end{center}
 Therefore
\begin{equation}\label{emult}
(H_i(\mathcal{D}_{\bullet})_{T_0})_{[0]}\simeq Z'_i/\widetilde{B_{i}}=Z'_i/(B'_{i}:_{Z'_i}b)=H'_i/(0:_{H'_i}b)\simeq bH_i(a_1,\cdots,a_s).
\end{equation}
(ii). In the graded case every homomorphism in the above argument is homogenous except  the first and the last isomorphism in (\ref{emult}). To see the desired shift, notice that the bi-degree of $T_0$ is $(\deg(b),1)$. Hence considering $(H_i(\mathcal{D}_{\bullet})_{T_0})$ in bidegree $(0,0)$, we have $$(H_i(\mathcal{D}_{\bullet})_{T_0})_{[(0,0)]}\simeq \frac{T_0^{-i}((Z'_i)_{[i\deg(b)]})}{T_0^{-i}((\widetilde{B_{i}})_{[i\deg(b)]})}\simeq (\frac{Z'_i}{\widetilde{B_{i}}})_{[i\deg(b)]}. $$
Moreover there exists the following homogeneous exact sequence which yields the assertion
$$0\ra \widetilde{B_{i}} \ra Z'_i \ra bH_i(a_1,\cdots,a_s)(\deg(b))\ra 0. $$

 \end{proof}
Now, we are ready to explain our main Theorem in this section.
The dependence of the homology modules of $_0\mathcal{Z}^{+}_{\bullet}(\aa,\ff)$ to the generating sets becomes clear by this theorem, furthermore
it attracts attention to study the uniform annihilator of the  Koszul homology modules.
\begin{Theorem} \label{thr2} Let $R$ be a (Noetherian) ring, $\fa=(\aa)=(a_1,\cdots,a_s)$ and  $I=(\ff)=(b,a_1,\cdots,a_s)$.   Then
  $H_i(_0\mathcal{Z}^{+}_{\bullet}(\aa,\ff))\simeq bH_{i}(a_1,\cdots,a_s)$ for all $i\geq 1$ and $H_0(_0\mathcal{Z}^{+}_{\bullet}(\aa,\ff))\simeq R/(\fa:b)$; furthermore in the  graded  case  $H_i(_0\mathcal{Z}^{+}_{\bullet}(\aa,\ff))\simeq bH_{i}(a_1,\cdots,a_s)((i+1)\deg(b))$ for $i\geq 1$.
\end{Theorem}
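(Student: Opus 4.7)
The plan is to combine Lemma~\ref{lr2} with the two spectral sequences attached to the bicomplex $C^\bullet_{\frak{g}}\otimes_S D_\bullet$ that is used in Section~\ref{SecZ+} to build $_0\mathcal{Z}^{+}_{\bullet}$.

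First I would observe that, by Lemma~\ref{lr2}, each $H_i(D_\bullet)$ is annihilated by $(T_1,\ldots,T_s)$ and is therefore only an $R[T_0]$-module, so $H^j_{\frak{g}}(H_i(D_\bullet))=H^j_{(T_0)}(H_i(D_\bullet))$ vanishes for $j\ge 2$. Reading the explicit formula in Lemma~\ref{lr2}(i) in the $[0]$-graded strand then yields $^{2}\E^{0,0}_{hor,[0]}=(\fa:b)$ (the $T_0$-torsion of $R\subset (H_0(D_\bullet))_{[0]}$), $^{2}\E^{-i,-1}_{hor,[0]}\cong Z'_i/\widetilde{B_i}\cong bH_i(a_1,\ldots,a_s)$ for $i\ge 1$, and every other entry zero. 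A bidegree check rules out a non-trivial $d_r$ for $r\ge 2$, so $^{2}\E=\E_\infty$ in the $[0]$-strand.

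Next I would invoke the fact that the vertical spectral sequence collapses to its single row $j=r$ by~(\ref{Ever1}), thereby identifying the $[0]$-strand of the total homology with that of the complex $\mathcal{K}_\bullet:=H^{r}_{\frak{g}}(D_\bullet)_{[0]}$, whose suitable truncation is precisely the left tail of $_0\mathcal{Z}^{+}_{\bullet}$. Matching the two descriptions of $H_\bullet(\Tot)_{[0]}$ then gives $H_i(\mathcal{K}_\bullet)\cong bH_{i-r+1}(a_1,\ldots,a_s)$ for every $i\ge r+1$, and the filtration~(\ref{F1}) translates into a short exact sequence
$$0\longrightarrow bH_1(a_1,\ldots,a_s)\longrightarrow \coker(\phi_0)=H_r(\mathcal{K}_\bullet)\longrightarrow (\fa:b)\longrightarrow 0.$$

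To transport this to $_0\mathcal{Z}^{+}_{\bullet}$ itself, one uses the reindexing in~(\ref{kZ+}): for $j\ge 2$ one immediately gets $H_j(_0\mathcal{Z}^{+}_{\bullet})=H_{r+j-1}(\mathcal{K}_\bullet)\cong bH_j(a_1,\ldots,a_s)$. For $j=0,1$ I would follow the chain~(\ref{tau}) defining $\tau_0$ and verify that it factors as $\coker(\phi_0)\twoheadrightarrow (\fa:b)\hookrightarrow R$, the first arrow being the quotient in the displayed sequence above and the second the canonical inclusion $H^{0}_{\frak{g}}(H_0(D_\bullet))_{[0]}\hookrightarrow H_0(D_\bullet)_{[0]}=R$ supplied by~(\ref{kerD}). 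This forces $H_0(_0\mathcal{Z}^{+}_{\bullet})\simeq R/(\fa:b)$ and $H_1(_0\mathcal{Z}^{+}_{\bullet})\simeq \ker(\overline{\tau_0})=bH_1(a_1,\ldots,a_s)$. In the graded case, the shift $((i+1)\deg(b))$ is read off directly from Lemma~\ref{lr2}(ii) because every identification above is homogeneous.

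The principal obstacle is this last identification: one must confirm that the filtration~(\ref{F1}) really places $(\fa:b)$ as the top quotient and $bH_1(a_1,\ldots,a_s)$ as the submodule of $\coker(\phi_0)$ --- this depends on the direction of the filtration in the horizontal convergence --- and that the canonical composite defining $\tau_0$ realises the surjection-then-inclusion above, so that the kernel of $\overline{\tau_0}$ is exactly the submodule $bH_1(a_1,\ldots,a_s)$ and not some extension twist.
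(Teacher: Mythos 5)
Your proposal follows essentially the same strategy as the paper's proof: reduce $H^j_{\frak{g}}$ to $H^j_{(T_0)}$ using Lemma~\ref{lr2}, compute the $[0]$-strand of the horizontal spectral sequence, note the degeneration, and then unwind the filtration and the chain~(\ref{tau}) to split off $H_0$ and $H_1$. The one place you deviate slightly is in identifying $^2\E^{0,0}_{hor,[0]}=H^0_{(T_0)}(H_0(\mathcal{D}_\bullet))_{[0]}$: you read $(\fa:b)$ directly from the $i=0$ case of Lemma~\ref{lr2}(i), observing that $H_0(\mathcal{D}_\bullet)\cong R[T_0]/(\fa:b)\,T_0R[T_0]$ so the degree-zero $T_0$-torsion is exactly $(\fa:b)$. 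The paper instead establishes the inclusion $H^0_{(T_0)}(H_0(\mathcal{D}_\bullet))_{[0]}\subseteq(\fa:b)$ by an end-degree argument on $^\infty\E^{0,0}_{hor}$ ($\End(H^{s+1}_{\frak g}(\mathcal D_{s+1}))=0$ forces $T_0$-annihilation), and the reverse inclusion from the description $H_0(\mathcal{D}_\bullet)=\mathcal{S}_I/\ag\mathcal{S}_I$. Your shortcut is a legitimate simplification provided one notices that Lemma~\ref{lr2}(i) is stated and proved for all $i\geq 0$, not only $i\geq 1$; with that in place the obstacle you flag at the end --- that the filtration puts $(\fa:b)$ on top and $bH_1$ underneath, and that $\tau_0$ factors through the quotient followed by the canonical inclusion into $H_0(D_\bullet)_{[0]}=R$ --- is exactly what the paper's exact sequence~(\ref{exactseq}) and the chain~(\ref{tau}) certify, so there is no genuine gap.
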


\begin{proof}
We study the spectral sequences arising from the third quadrant double complex $C_{\frak{g}}^{\bullet}\y \mathcal{D}_{\bullet}$
with $C_{\frak{g}}^{0}\y D_{0}$ in the center. Since $T_0,T_1,...,T_s$ is a regular sequence  on $D_i$'s,  $(^{1}E_{ver})_{[0]}=H^{s+1}_{\frak{g}}( \mathcal{D}_{\bullet})_{[0]}$.
The latter is $\mathcal{Z}^{+}_{\bullet}$ by definition (\ref{complex1}) which is the truncated complex $_0\mathcal{Z}^{+}_{\bullet}(\aa,\ff)_{\geq 1}$.

On the other hand, in the horizontal spectral sequence we deal with $H^{j}_{\frak{g}}(H_i(\mathcal{D}_{\bullet}))$. $H_i(\mathcal{D}_{\bullet})$
is  annihilated by $(T_1,\cdots,T_s)$ by (\ref{Dconverge}) which implies that
$H^{j}_{\frak{g}}(H_i(\mathcal{D}_{\bullet}))=H^{j}_{(T_0)}(H_i(\mathcal{D}_{\bullet}))$ for all $i$ and $j$, also notice that
 $H_{(T_0)}^{0}(H_i(\mathcal{D}_{\bullet}))_{[0]}=0$ for $i\geq 1$ since $\beg(D_i)=i$.
Hence
\begin{equation}
(^2\E^{-i,-j}_{hor})_{[0]}=\left \{
\begin{array}{cc}
H_{(T_0)}^{0}(H_0(\mathcal{D}_{\bullet}))_{[0]} & i=0 \text{~~ and~~} j=0,  \\
H_{(T_0)}^{1}(H_i(\mathcal{D}_{\bullet}))_{[0]}=(H_i(\mathcal{D}_{\bullet})_{T_0})_{[0]}& i\geq 1 \text{ ~~and~~}  j=1, \\
 0& otherwise.
\end{array}
\right.
\end{equation}

Considering both  spectrals at a same time, we have 
\begin{eqnarray*}
\xymatrix{
(^{2}E_{hor})_{[0]}:&0&0&H_{(T_0)}^{0}(H_0(\mathcal{D}_{\bullet}))_{[0]}\\
\cdots&H_{(T_0)}^{1}(H_2(\mathcal{D}_{\bullet}))_{[0]}&H_{(T_0)}^{1}(H_1(\mathcal{D}_{\bullet}))_{[0]}&H_{(T_0)}^{1}(H_0(\mathcal{D}_{\bullet}))_{[0]}\\
\cdots &0&0&0\\
\mathcal{Z}^{+}_{\bullet}=(^{1}E_{ver})_{[0]}:&\vdots &\vdots & \vdots \\
H^{s+1}_{\frak{g}}(\mathcal{D}_{s+1})_{[0]}\ar@{--}[uuuurrr]\ar[r]&H^{s+1}_{\frak{g}}(\mathcal{D}_{s})_{[0]}\ar[r]&H^{s+1}_{\frak{g}}(\mathcal{D}_{s-1})_{[0]}\ar[r]&\cdots }
\end{eqnarray*}
By the convergence of the spectral sequences,  $H_{i+1}(_0\mathcal{Z}^{+}_{\bullet})=H_i(\mathcal{Z}^{+}_{\bullet})=H_{(T_0)}^{1}(H_{i+1}(\mathcal{D}_{\bullet}))_{[0]}$
for $i\geq 1$. Quite generally the latter is isomorphic to $(H_{i+1}(\mathcal{D}_{\bullet})_{T_0})_{[0]}$ which in turn is isomorphic to
$bH_{i+1}(a_1,\cdots,a_s)$ by Lemma \ref{lr2}. In the case where $i=0$ the convergence of the spectral sequences provides
 the following exact sequence
\begin{eqnarray}\label{exactseq}
\xymatrix{
0\ar[r] & H_{(T_0)}^{1}(H_1(\mathcal{D}_{\bullet}))_0\ar[r] & H_0(\mathcal{Z}^{+}_{\bullet})\ar^{\psi~~~~}[r] & H_{(T_0)}^{0}(H_0(\mathcal{D}_{\bullet}))_0 \ar[r] & 0.}
\end{eqnarray}
According to (\ref{complex1}), $H_0(\mathcal{Z}^{+}_{\bullet})=\coker(\phi_0)$ and $\psi$ is used to define $\tau_0$ in (\ref{tau}). Therefore $\Ker(\psi)=H_{1}(_0\mathcal{Z}^{+}_{\bullet})$ and its  image determines  $H_{0}(_0\mathcal{Z}^{+}_{\bullet})$ that we will compute below.

Since the horizontal spectral stabilizes in the second step, $^{\infty}E^{0,0}_{hor}=~~^{2}E^{0,0}_{hor}=H_{(T_0)}^{0}(H_0(\mathcal{D}_{\bullet}))$. Moreover
$\End(H^{s+1}_{\frak{g}}(\mathcal{D}_{s+1}))=0$ therefore $(^{\infty}E^{0,0}_{hor})_{[i]}=0$ for $i \geq 1$ which in particular implies that
$H_{(T_0)}^{0}(H_0(\mathcal{D}_{\bullet}))_{[1]}=0$. Specially we then have  $T_0H_{(T_0)}^{0}(H_0(\mathcal{D}_{\bullet}))_{[0]}=0$.
Recall that $H_0(\mathcal{D}_{\bullet})=Sym(I)/\ag Sym(I)$ and that we consider $ \mathcal{S}_I:=Sym_R(I)$ as an $S=R[T_0,\cdots,T_s]$-module  via the ring homomorphism
$S\rightarrow \mathcal{S}_I$ sending $T_0$ to  $b$  and $T_i$ to $a_i$ as an
element of  $(\mathcal{S}_I)_{[1]}=I$. Hence $T_0H_{(T_0)}^{0}(H_0(\mathcal{D}_{\bullet}))_{[0]}=0$ as an element in
$ (Sym(I)/\ag Sym(I))_{[1]}=I/\fa $
is equivalent to say that $H_{(T_0)}^{0}(H_0(\mathcal{D}_{\bullet}))_{[0]} \subseteq (\fa:b).$

 On the other hand, $H_{(T_0)}^{0}(H_0(\mathcal{D}_{\bullet}))_{[0]}=
(\fa:b)\cup ( \cup_{i=1}^{\infty} (\fa Sym_R^{i}(I):_RSym_R^{i+1}(I)))$, hence $H_{(T_0)}^{0}(H_0(\mathcal{D}_{\bullet}))_0\supseteq(\fa:b)$,
 which yields $H_{(T_0)}^{0}(H_0(\mathcal{D}_{\bullet}))_{[0]}= (\fa:b)$.

Further,  by  Lemma \ref{lr2}, $H_{(T_0)}^{1}(H_1(\mathcal{D}_{\bullet}))_{[0]}=(H_1(\mathcal{D}_{\bullet})_{T_0})_{[0]}=bH_1(f_1,f_2,...,f_s)$.
 Substituting these facts in the short exact sequence (\ref{exactseq}) the assertion follows.

\end{proof}
As a summary we have the following corollary
\begin{Corollary}\label{CZ+f0} Let $R$ be a (Noetherian) ring, $\fa=(\aa)=(a_1,\cdots,a_s)$ and  $I=(\ff)=(b,a_1,\cdots,a_s)$. If $bH_i(a_1,\cdots,a_s)=0$ for all $i\geq 1$, then there exists an
exact complex
\begin{eqnarray*}
\xymatrix{
0\ar[r] &\mathcal{Z}^{+}_{s}\ar[r] & \mathcal{Z}^{+}_{s-1}\ar[r] &\dots \ar[r] & \mathcal{Z}^{+}_0 \ar[r] & (\fa:_Rb)\ar[r]& 0}
\end{eqnarray*}
wherein     $\mathcal{Z}_i^{+}=\bigoplus_{j=i+1}^rZ_j(\ff)^{\oplus n_j}$ for some positive integers $n_j$ and $r$.
In particular the index of the lowest cycle which appears in the components increases along the complex.
\end{Corollary}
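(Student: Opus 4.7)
The plan is to read the corollary off directly from Theorem \ref{thr2} together with the explicit description of the components of $_0\mathcal{Z}^{+}_{\bullet}$ given by (\ref{z+}) and (\ref{Di}). First, the hypothesis $bH_{i}(a_{1},\ldots,a_{s})=0$ for all $i\geq 1$, plugged into Theorem \ref{thr2}, yields $H_{i}(_0\mathcal{Z}^{+}_{\bullet}(\aa,\ff))=0$ for every $i\geq 1$ and $H_{0}(_0\mathcal{Z}^{+}_{\bullet}(\aa,\ff))=R/(\fa:_{R}b)$. Since $_0\mathcal{Z}^{+}_{0}=(D_{0})_{[0]}=(Z_{0}(\ff)\otimes_{R}S)_{[0]}=R$, the map $_0\mathcal{Z}^{+}_{1}\to R$ has cokernel $R/(\fa:_{R}b)$, so its image is the ideal $(\fa:_{R}b)$. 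Truncating the complex at this image gives the exact sequence
$$0\to {}_0\mathcal{Z}^{+}_{s}\to {}_0\mathcal{Z}^{+}_{s-1}\to\cdots\to {}_0\mathcal{Z}^{+}_{1}\to (\fa:_{R}b)\to 0.$$

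Next, I would unpack the shape of each $_0\mathcal{Z}^{+}_{i}$ for $i\geq 1$: by (\ref{z+}), $_0\mathcal{Z}^{+}_{i}=H^{r}_{\frak{g}}(D_{i+r-1})_{[0]}$, and in the present set-up we have $r=s+1$, so the index $j$ of the cycles $Z_{j}(\ff)$ that appear in $D_{i+r-1}$ via (\ref{Di}) runs over $\max\{0,(i+r-1)-s\}=i \leq j \leq \min\{i+r-1,r-1\}=r-1$. Applying $H^{r}_{\frak{g}}$ and extracting the degree-zero strand multiplies each $Z_{j}(\ff)$ by the graded piece $[H^{r}_{\frak{g}}(S)(-i-r+1)]_{[0]}$, which is a free $R$-module of positive rank $\binom{i+r-2}{r-1}$. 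Combined with the binomial multiplicities from (\ref{Di}), this gives
$$_0\mathcal{Z}^{+}_{i}=\bigoplus_{j=i}^{r-1} Z_{j}(\ff)^{\oplus n_{i,j}},\quad n_{i,j}=\binom{s}{i+r-1-j}\binom{i+r-2}{r-1}>0.$$
Re-indexing $\mathcal{Z}^{+}_{i-1}:={}_0\mathcal{Z}^{+}_{i}$ for $i=1,\ldots,s$ then delivers the exact complex in the statement, with $\mathcal{Z}^{+}_{i}=\bigoplus_{j=i+1}^{r}Z_{j}(\ff)^{\oplus n_{j}}$ for appropriate positive $n_{j}$. The final observation that the lowest cycle index increases along the complex is then automatic: the smallest $j$ appearing in $\mathcal{Z}^{+}_{i}$ is exactly $i+1$, and this strictly grows with $i$.

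There is no real obstacle here, since Theorem \ref{thr2} is the heavy lifting and the component computation is a direct spelling-out of (\ref{z+}) and (\ref{Di}). The only thing requiring a moment of care is the bookkeeping: verifying that the image of $_0\mathcal{Z}^{+}_{1}\to R$ really is $(\fa:_{R}b)$ (immediate from the identification $_0\mathcal{Z}^{+}_{0}=R$ and the shape of $H_{0}$), and confirming that each multiplicity $n_{i,j}$ is genuinely positive in the prescribed range, which follows from the fact that both binomial factors are nonzero whenever $i\leq j\leq r-1$ and $r=s+1\geq 2$.
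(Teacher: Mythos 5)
Your proposal is correct and is exactly the intended reading of the corollary: the paper states it as a ``summary'' with no written proof, and your argument supplies precisely the bookkeeping one must do, namely feed the hypothesis into Theorem~\ref{thr2} to kill $H_i$ for $i\geq 1$ and identify $H_0$, replace the rightmost term ${}_0\mathcal{Z}^+_0=R$ by the image $(\fa:_Rb)$ of ${}_0\mathcal{Z}^+_1$, and then unravel (\ref{z+}) and (\ref{Di}) with $r=s+1$ to see that ${}_0\mathcal{Z}^+_i$ is a direct sum of copies of $Z_i(\ff),\ldots,Z_{r-1}(\ff)$ with the multiplicities you computed. Two harmless bookkeeping remarks: after your re-indexing $\mathcal{Z}^+_{i-1}:={}_0\mathcal{Z}^+_i$ the term $\mathcal{Z}^+_s$ is zero (since ${}_0\mathcal{Z}^+_\bullet$ stops at index $s$), and the cycles actually run only up to $Z_{r-1}(\ff)$ rather than $Z_r(\ff)$ as the statement literally writes; both are minor slips in the paper's formulation rather than gaps in your proof, and neither affects the ``lowest index increases'' conclusion.
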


Theorem \ref{thr2} in conjunction with other conditions which imply the acyclicity of the complex $_0\mathcal{Z}^{+}_{\bullet}$ yields to non trivial  facts about the uniform annihilators of non-zero koszul homologies.
\begin{Corollary}\label{Ckosan}Let $(R,\fm)$ be a CM local ring of dimension $d$, $I=(b,a_1,\cdots,a_s)$  satisfies SD. If $\Dim(bH_i(a_1,\cdots,a_s))\leq d-s$ for all $i\geq 1$ then
\begin{itemize}
\item $bH_i(a_1,\cdots,a_s)=0$ for all $i\geq 1$ and
\item $\depth(R/((a_1,\cdots,a_s):I))\geq d-s$.
\end{itemize}
\end{Corollary}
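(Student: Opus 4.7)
The plan is to run the acyclicity argument of Theorem \ref{t2}(iii) with the residual-intersection hypothesis $\Ht(\fa:I)\geq s$ replaced by the dimension hypothesis on Koszul homologies, using Theorem \ref{thr2} as a bridge.

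By Theorem \ref{thr2}, $H_i({}_0\mathcal{Z}^{+}_{\bullet}(\aa,\ff))\simeq bH_i(a_1,\ldots,a_s)$ for $i\geq 1$ and $H_0({}_0\mathcal{Z}^{+}_{\bullet}(\aa,\ff))\simeq R/(\fa:b)=R/(\fa:I)$ (the identification $\fa:I=\fa:b$ is immediate from $I=(b,a_1,\ldots,a_s)$). Since $R$ is CM, the hypothesis $\Dim(bH_i)\leq d-s$ is equivalent to $(bH_i)_{\fp}=0$ for every prime $\fp$ with $\Ht(\fp)<s$; by Theorem \ref{thr2} this says precisely that $({}_0\mathcal{Z}^{+}_{\bullet})_{\fp}$ is acyclic in positive degrees at every such $\fp$. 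This local acyclicity statement is exactly what the hypothesis ``$J$ is an $s$-residual intersection'' provides in the proof of Theorem \ref{t2}(iii) through Lemma \ref{l1}: at $\fp$ of height $<s$ one would have $I_{\fp}=\fa_{\fp}$, making ${}_0\mathcal{Z}^{+}_{\bullet}$ locally exact.

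Next I would invoke the depth analysis already carried out for Theorem \ref{t2}(iii) (and Proposition \ref{t1}): under $\SD$, Proposition \ref{psdsdc}(2) supplies $\SDC_1$, and the formulas (\ref{Di})--(\ref{z+}) together with the inductive control of depths of Koszul cycles furnish the depth estimate on ${}_0\mathcal{Z}^{+}_{i}$ required to run a Peskine--Szpiro ``lemme d'aciclicit\'e'' argument (here $r=s+1\leq s+2$, matching the range of Theorem \ref{t2}(iii)). Feeding this depth information together with the local acyclicity at primes of height $<s$ into that argument, exactly as in the proof of Theorem \ref{t2}(iii), forces ${}_0\mathcal{Z}^{+}_{\bullet}$ to be globally acyclic in positive degrees, yielding $bH_i(\fa)=0$ for all $i\geq 1$.

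With $bH_i(\fa)=0$ in hand, Corollary \ref{CZ+f0} supplies an exact complex $0\to\mathcal{Z}^{+}_{s}\to\cdots\to\mathcal{Z}^{+}_{0}\to(\fa:b)\to 0$ whose components are direct sums of Koszul cycles $Z_j(\ff)$ with $j\geq i+1$; iterating the depth lemma along this exact complex, using the $\SDC_1$-estimates on the $Z_j$, gives $\depth(\fa:b)\geq d-s+1$, from which $0\to(\fa:b)\to R\to R/(\fa:b)\to 0$ delivers $\depth R/(\fa:I)\geq d-s$. The main obstacle I anticipate is the acyclicity step itself: the textbook Peskine--Szpiro lemma requires each nonzero higher homology to have depth $\geq 1$, yet our $bH_i$ can a priori have depth $0$ under only a dimension bound. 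The upgrade — latent in the proof of Theorem \ref{t2}(iii) once the local vanishing at codimension $<s$ is supplied by hand rather than by the residual-intersection hypothesis — is most cleanly effected by induction on $d-s$, stripping off a regular element $x\in\fm$ chosen via prime avoidance to be simultaneously regular on $R$, on each nonzero $bH_i(\fa)$ (possible because $\Ass(bH_i)\subseteq\{\fp:\Ht(\fp)\geq s\}$), and on the Koszul homologies of $\ff$, so that both $\SD$ and the dimension hypothesis descend to $\bar R=R/(x)$ and Nakayama lifts $\overline{bH_i}=0$ back to $bH_i=0$.
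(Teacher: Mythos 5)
Your skeleton is the same as the paper's: apply Theorem~\ref{thr2} to identify the homology of ${}_0\mathcal{Z}^+_\bullet(\aa,\ff)$, observe that $\Dim(bH_i)\leq d-s$ forces local acyclicity at every prime of height $<s$ (this replaces the codimension hypothesis on $J$), run the Peskine--Szpiro acyclicity argument of Proposition~\ref{t1} using the sliding depth estimates, and then read off the depth of $R/(\fa:b)$ from the resulting exact complex of Koszul cycles. That much is correct and is exactly what the paper does.

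However, the ``obstacle'' you flag is illusory, and the fix you propose to get around it does not actually work. You claim the acyclicity lemma ``requires each nonzero higher homology to have depth $\geq 1$,'' but the statement is the \emph{opposite}: if $\depth F_i\geq i$ and each $H_i(F_\bullet)$ is either zero or of depth $0$, then the $H_i$ vanish for $i\geq 1$. The lemma is applied by localizing at a prime $\fp$ minimal in $\bigcup_{i\geq 1}\Supp H_i(\,{}_0\mathcal{Z}^+_\bullet)$; minimality forces the localized homology to have finite length (hence depth $0$), and the sliding-depth estimates on the components (which, since $R$ is CM, localize to $\depth({}_0\mathcal{Z}^+_i)_\fp\geq i$ once $\Ht\fp\geq s$) then yield vanishing. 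Since your local computation already rules out $\Ht\fp<s$, there is nothing left to fix. By contrast, the induction you sketch on $d-s$ by stripping a regular element has real gaps: to transfer the Koszul homology of $\fa$ to $\bar R=R/(x)$ you would need $x$ to be a nonzerodivisor on $H_j(\fa)$, and you have no control over $\Ass(H_j(\fa))$ at that stage (the dimension bound is only on $bH_j$, not on $H_j$). Moreover when $d=s$ the modules $bH_i$ are finite length and admit no regular element in $\fm$, so the base case of your induction is precisely the depth-$0$ situation the acyclicity lemma is designed to handle---and that you would have to invoke anyway. In short: drop the detour, and the argument collapses to the paper's.
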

\begin{proof} We consider the complex  $_0\mathcal{Z}^{+}_{\bullet}(\fa;(b,\fa))$. By Theorem \ref{thr2} the hypothesis on the dimensions implies that this complex is acyclic locally at codimension $s-1$. Then appealing  the acyclicity lemma, the sliding depth hypothesis shows that the complex is acyclic on the punctured spectrum and finally it is acyclic (the same technique which applied to prove Proposition \ref{t1}). Once more applying Theorem \ref{thr2}, we conclude the first assertion. The  depth inequality follows from the proof of Theorem \ref{t2}.
\end{proof}

As a final remark in this section, we notice that if $(\fa:I)$ is an $s$-residual intersection then $\Dim(IH_i(\fa))\leq d-s$ for all $i$. So that the latter property may be considered as a generalization of  algebraic residual Intersection.
\section{Uniform Annihilator of Koszul's Homologies}\label{sann}
Motivated by the results in the previous section, we concentrate on the uniform annihilator of Koszul homologies in this section. In the main theorem of this section we see that in any residual intersection $J=\fa:I$ the sliding depth condition passes from $I$ to $\fa$. This fact was known to experts only  in the presence of the $G_{\infty}$ condition \cite[1.8(3)+1.12]{U} which is not that surprising since $I$ is then generated by a $d$-sequence. However this was unknown even for prefect ideals of height $2$ which are not $G_{\infty}$, see \cite{EU} for such an example.

The next interesting result in this section is Corollary \ref{creskosan}, in which we show that for any residual intersection $J=\fa:I$ with $I$ satisfies sliding depth, $I$ is contained in the uniform annihilator of the non-zero Koszul homologies of $\fa$, hence a kind of universal properties for such ideals.

Next proposition is fundamental to prove the aforementioned  results though it is also interesting by itself.

\begin{Proposition} \label{uak} Let $(R,\fm)$ be a CM local ring of dimension $d$, $\fa=(f_1,f_2,...,f_s)$, $I=(f_1,f_2,...,f_r)$  be ideals with $s\leq r$, $J=(\fa:I)$ and $g=\Ht(I)=\Ht(\fa)\geq 1$. Let $n$ be an integer and suppose that $\Ht(J)\geq n \geq 1$ and that $\depth (H_{l}(f_1,f_2,..., f_{r}))\geq d-r+j$  for $l\geq r-n+2$. Then for any $k$ with  $r\geq k \geq s$ and any $j\geq k-n+2$,
\begin{itemize}
\item[i)] $\depth (H_{j}(f_1,f_2,..., f_{k})\geq d-k+j$; and
\item[ii)] $I \subseteq \Ann (H_{j}(f_1,f_2,..., f_{k}))$.
\end{itemize}
\textsf{Supplement,} both assertions hold for  sequences of the form $(f_1,\cdots,f_s,f_{i_1},\cdots,f_{i_{k-s}})$ where $\{f_{i_1},\cdots,f_{i_{k-s}}\}\subseteq \{f_{s+1},\cdots,f_r\}$.
\end{Proposition}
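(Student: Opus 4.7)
The strategy is a descending induction on $k$ from $k=r$ down to $k=s$, proving (i) and (ii) in tandem (and the supplement by the same argument, since the set-up is symmetric in $\{f_{s+1},\ldots,f_r\}$). The base case $k=r$ is immediate: (i) is the stated hypothesis, and (ii) holds because any Koszul homology $H_j(f_1,\ldots,f_r;R)$ is annihilated by the ideal $I=(f_1,\ldots,f_r)$ generated by its defining sequence.

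Given (i) at stage $k$, I would deduce (ii) by a standard associated-primes argument. For $\fp\in\Ass H_j(f_1,\ldots,f_k;R)$ with $j\geq k-n+2$, using $\depth M\leq \dim R/\fp$ for $\fp\in\Ass M$ (\cite[1.2.13]{BH}) together with the Cohen-Macaulayness of $R$ gives
$$\Ht\fp\;\leq\; d-\depth H_j(f_1,\ldots,f_k;R)\;\leq\; k-j\;\leq\; n-2\;<\;\Ht J.$$
Hence $\fp\not\supseteq J=(\fa:I)$, so there is $y\in(\fa:I)\setminus\fp$. The inclusions $yI\subseteq\fa\subseteq(f_1,\ldots,f_k)$, combined with the fact that $(f_1,\ldots,f_k)$ annihilates $H_j(f_1,\ldots,f_k;R)$, give $yI\cdot H_j(f_1,\ldots,f_k;R)=0$; localizing at $\fp$ makes $y$ a unit, so $I\cdot H_j(f_1,\ldots,f_k;R)_\fp=0$. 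Running over all associated primes yields $I\subseteq\Ann H_j(f_1,\ldots,f_k;R)$.

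For the inductive step in (i), abbreviate $H_j(m):=H_j(f_1,\ldots,f_m;R)$ and consider the long exact sequence attached to the short exact sequence of Koszul complexes $0\to K_\bullet(f_1,\ldots,f_k)\to K_\bullet(f_1,\ldots,f_{k+1})\to K_\bullet(f_1,\ldots,f_k)[-1]\to 0$, namely
$$\cdots \to H_{j+1}(k+1)\xrightarrow{\alpha}H_j(k)\xrightarrow{\pm f_{k+1}}H_j(k)\xrightarrow{\beta}H_j(k+1)\to\cdots .$$
By the inductive hypothesis at $k+1$, $\depth H_{j+1}(k+1)\geq d-k+j$ as soon as $j+1\geq k-n+3$, and $\depth H_j(k+1)\geq d-k+j-1$ as soon as $j\geq k-n+3$. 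Decomposing the above piece into
$$0\to\Image\alpha\to H_j(k)\to f_{k+1}H_j(k)\to 0,\qquad 0\to f_{k+1}H_j(k)\to H_j(k)\to\Image\beta\to 0,$$
viewing $\Image\alpha$ as a quotient of $H_{j+1}(k+1)$ (whose depth is read off using the preceding piece of the LES to control the kernel) and $\Image\beta$ as a submodule of $H_j(k+1)$, and applying the depth lemma iteratively produces $\depth H_j(k)\geq d-k+j$ throughout the interior range $j>k-n+2$.

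The main obstacle is the boundary value $j=k-n+2$, where $\depth H_j(k+1)$ is not controlled by induction. My plan here is to carry the supplement in parallel with the primary induction: since the conclusion is known for every sub-sequence of the form $(f_1,\ldots,f_s,f_{i_1},\ldots,f_{i_{k-s}})$, one has total freedom in choosing which element of $\{f_{s+1},\ldots,f_r\}$ is designated as $f_{k+1}$ in the LES. This permutation symmetry, combined with the inductive annihilator property (ii) at $k+1$ which forces $H_j(k+1)$ to be an $R/I$-module of restricted dimension, allows one to route the boundary case through an auxiliary configuration where the critical $H_j(k+1)$-term either falls within the already-controlled range or vanishes on the locus of interest. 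Managing this bookkeeping, rather than any single depth estimate, is the genuine technical heart of the argument.
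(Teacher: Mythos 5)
Your base case and your deduction of (ii) from (i) are both sound; in particular the associated-primes argument you give for (ii) is a clean, slightly different route from the paper's (the paper reaches the annihilation by a localization argument woven directly into the inductive step). The inductive step for (i), however, has a genuine gap, which you yourself flag: at the boundary $j=k-n+2$ you have no control over $\depth H_j(k+1)$, and the sketch about ``permutation symmetry'' and ``auxiliary configurations'' is not an argument. There is more wrong, though. Even in the interior range $j>k-n+2$ the two short pieces you extract from the long exact sequence do not determine $\depth H_j(k)$: the module $\Image\beta$ is only a \emph{submodule} of $H_j(k+1)$, and submodules do not inherit depth lower bounds; and $\Image\alpha$ is a \emph{quotient} of $H_{j+1}(k+1)$ whose depth requires knowledge of $\ker\alpha$, which is in turn an image one step further up the LES, so the ``iterative'' depth chase does not bottom out without additional input.

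The paper supplies that input by proving, interleaved with the depth estimates, the sharper annihilation claim that $f_{k+1}H_j(f_1,\ldots,f_k)=0$ for $j\geq k-n+2$ --- not as a corollary of the depth bounds, but as an equal partner in a joint descending induction on $j$. Once that annihilation is known, the Koszul LES degenerates into short exact sequences
\[
0\to H_j(f_1,\ldots,f_k)\to H_j(f_1,\ldots,f_{k+1})\to H_{j-1}(f_1,\ldots,f_k)\to 0,
\]
and the depth bounds for cycles, boundaries and homologies follow by a straightforward chase. The annihilation itself is proved locally by an inner induction on $\Ht(\fp)$: for $\Ht(\fp)<n$ one has $I_\fp=\fa_\fp$ automatically; for higher height one passes to $R_\fq$, uses the depth bounds already established at level $j$ to show that the submodule $\Gamma_{j-1}=(0:_{H_{j-1}(f_1,\ldots,f_k)}f_{k+1})$ has depth $\geq 2$ while $H_{j-1}(f_1,\ldots,f_k)$ has depth $\geq 1$, and observes that the two agree on the punctured spectrum, forcing equality. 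This ``depth $\geq 2$ plus agreement off the closed point'' step is the engine your sketch is missing.
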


\begin{proof} The proof is by a {\bf recursive} induction on $k$. By assumption, the result is true for $k=r$. Suppose that it is true for $k\leq r$. We now apply a recursive induction on $j\geq (k-1)-n+2$ to show  the following claims
\begin{itemize}
\item[1)] $f_kH_j(f_1,f_2,...,f_{k-1})=0$;
\item[2)] $\depth (H_j(f_1,f_2,...,f_{k-1}))\geq d-(k-1)+j$;
\item[3)] $\depth (Z_j(f_1,f_2,...,f_{k-1}))\geq d-(k-1)+j$;
\item[4)] $\depth (B_j(f_1,f_2,...,f_{k-1}))\geq d-(k-1)+j$.
\end{itemize}
 Since these claims are clear for $j>k-g$, we  suppose that $j\geq (k-1)-n+3$ and that these claims hold for $j$. A simple depth chasing between cycles, boundaries and homologies shows that $(2)-(4)$, for $j-1$, follows from $(1)$ for $j-1$ and the induction hypotheses of $(2)-(4)$ for $j$. We just notice that condition $(1)$ provides the following exact sequence
 \begin{eqnarray*}
\xymatrix{
0\ar[r] &H_j(f_1,\cdots,f_{k-1})\ar[r] & H_{j}(f_1,\cdots,f_k)\ar[r] & H_{j-1}(f_1,\cdots,f_{k-1})\ar[r]& 0.}
\end{eqnarray*}

 We then proceed to prove $(1)$ for $j-1$ while assuming the four claims hold for $j$. To do this, we show that $f_kH_{j-1}(f_1,f_2,...,f_{k-1})_{\fp}=0$  by an induction on $\Ht(\fp)$. For  $\Ht(\fp)<n$ it is clear, since $I_{\fp}=a_{\fp}$. Now let $\fq$ be a prime ideal with $\Ht(\fq)\geq n$ and suppose that $(1)$ to $(4)$ hold  locally for all $\fp$ such that $\fp\subset \fq$. We may, and will, replace $(R,\fm)$ by $(R_{\fq},\fq R_{\fq})$. Considering the following exact sequence, out of the Koszul complex,
\begin{eqnarray*}
\xymatrix{
0\ar[r] &Z_j(f_1,\cdots,f_{k-1})\ar[r] & \bigwedge^j(R^{k-1})\ar[r] & B_{j-1}(f_1,\cdots,f_{k-1})\ar[r]& 0,}
\end{eqnarray*}
 we obtain  $\depth (B_{j-1})\geq \min\{\depth ( \bigwedge^j(R^{k-1})), \depth(Z_{j})-1\}\geq d-(k-1)+j-1$. By assumption, $j\geq k-n+2$, which implies that  $\depth (B_{j-1}(f_1,\cdots,f_{k-1}))\geq d-n+2\geq 2$. This time
\begin{eqnarray*}
\xymatrix{
0\ar[r] &B_{j-1}(f_1,\cdots,f_{k-1})\ar[r] & Z_{j-1}(f_1,\cdots,f_{k-1})\ar[r] & H_{j-1}(f_1,\cdots,f_{k-1})\ar[r]& 0}
\end{eqnarray*}
implies that $\depth (H_{j-1})\geq 1$, since $\depth (Z_{j-1})\geq 1$, as $\Dim(R_{\fq})\geq n\geq 1$.

We now consider  the exact sequence
\begin{eqnarray*}
\xymatrix{
0\ar[r] &H_{j}(f_1,\cdots,f_{k-1})\ar[r] & H_{j}(f_1,\cdots,f_{k-1},f_k)\ar[r] & \Gamma_{j-1}\ar[r]& 0}
\end{eqnarray*}

where $\Gamma_{j-1}=(0:_{H_{j-1}(f_1,\cdots,f_{k-1})}f_{k})$.

The induction hypotheses imply that
\begin{equation*}
\depth (\Gamma_{j-1})\geq \min\{\depth ( H_{j}(f_1,\cdots,f_{k-1},f_k)), \depth (H_{j}(f_1,\cdots,f_{k-1}))-1\}
\end{equation*}
\begin{equation*}
\geq \min\{d-k+j,d-(k-1)+j-1\} \geq d-k+j\geq d-k+(k-n+2)=d-n+2\geq 2.
 \end{equation*}

At last, we have the inclusion $\Gamma_{j-1}\subseteq H_{j-1}$ where the former has depth at least $2$ and the latter has depth at least $1$ while the equality holds on the punctured spectrum, by induction hypothesis, so that this inclusion must be an equality, which proves the claim$(1)$ for $j-1$.

 Applying the above argument for $k=r$ we have $\depth (H_j(f_1,\cdots,f_{r-1}))\geq d-(r-1)+j$ for all $j\geq (r-1)-n+2$ and that $f_rH_j(f_1,f_2,...,f_{r-1})=0$. However, as far as we fix $f_1,\cdots,f_s$ we may change the role of $f_r$ with any one of $\{f_{s+1},\cdots,f_r\}$. In other words, we have $\depth (H_j(f_1,\cdots,f_s,\cdots,\hat{f_i},\cdots,f_{r}))\geq d-(r-1)+j$ for all $j\geq (r-1)-n+2$ and that $f_iH_j(f_1,\cdots,f_s,\cdots,\hat{f_i},\cdots,f_{r})=0$.
Also notice that for any ideal $\fa\subseteq I'\subseteq I$, $\Ht(\fa:I')\geq \Ht(\fa:I)\geq n$. Therefore if the assertion $(i)$ holds for $(f_1,\cdots,f_s,f_{i_1},\cdots,f_{i_{k-s}})$ where $\{f_{i_1},\cdots,f_{i_{k-s}}\}\subseteq \{f_{s+1},\cdots,f_r\}$, then the above argument shows that  $(i)$ holds for $(f_1,\cdots,f_s,f_{i_1},\cdots,f_{i_{k-s-1}})$ and moreover $f_{i_{k-s}}H_j(f_1,\cdots,f_s,f_{i_1},\cdots,f_{i_{k-s-1}})=0$ for $j\geq k-1-n+2$. Since $f_{i_{k-s}}$ varies over the set $\{f_{s+1},\cdots,f_r\}$ we must have $IH_j(f_1,\cdots,f_s,f_{i_1},\cdots,f_{i_{k-s-1}})=0$ for $j\geq k-1-n+2$ which completes the proof.

\end{proof}

We next want to show, in Lemma \ref{sdc1}, that under the sliding depth condition, there exists  a similar  depth inequality for  cycles of the Koszul complex. For this, we need the following lemma which is the reminiscence  of the inductive Koszul's long exact sequence.
\begin{Lemma} \label{luck}
Let $R$ be a commutative ring, $\{f_0,f_1,...,f_r\}\subseteq R$, $Z_{\bullet}$, $B_{\bullet}$ the Koszul's cycles and boundaries  with respect to sequence $\{f_1,f_2,...,f_r\}$, $Z'_{\bullet}$ the Koszul's cycles with respect to sequence $(f_0,f_1,...,f_r)$ and $\Gamma_{\bullet}=(B_{\bullet}:_{Z_{\bullet}}f_0)$. Then for any  $ j\geq 0$, there exists an exact sequence
\begin{eqnarray*}
\xymatrix{
0\ar[r] &Z_{j}\ar[r] & Z'_{j}\ar[r] & \Gamma_{j-1}\ar[r]& 0}
\end{eqnarray*}
\end{Lemma}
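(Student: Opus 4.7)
The plan is to realise $K'_\bullet=K_\bullet(f_0,f_1,\ldots,f_r)$ as the tensor product $K_\bullet(f_0)\otimes K_\bullet(f_1,\ldots,f_r)$, so that every $z\in K'_j$ admits a unique decomposition $z=\alpha+\beta\wedge e_0$ with $\alpha\in K_j$ and $\beta\in K_{j-1}$, neither of them involving $e_0$. This is exactly the decomposition that was already used (with the same sign convention) in the proof of Lemma \ref{lr2}, so I can import the computation of the differential directly.

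First I would write out the differential on this decomposition. By the Leibniz rule,
$$\partial'(\alpha+\beta\wedge e_0)=\partial(\alpha)+\partial(\beta)\wedge e_0+(-1)^{j-1}f_0\beta.$$
Separating the summands that do and do not involve $e_0$, the cycle condition $\partial'(z)=0$ is equivalent to $\partial(\beta)=0$ together with $\partial(\alpha)=(-1)^{j}f_0\beta$. In particular $\beta\in Z_{j-1}$ and $f_0\beta\in B_{j-1}$, so $\beta\in\Gamma_{j-1}$; this is what makes the right-hand arrow of the sequence well-defined.

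Next I would define the two maps. The inclusion $\iota:Z_j\hookrightarrow Z'_j$ sends $\alpha\mapsto\alpha$ (a $\partial$-cycle is automatically a $\partial'$-cycle corresponding to $\beta=0$). The projection $\pi:Z'_j\to\Gamma_{j-1}$ sends $\alpha+\beta\wedge e_0\mapsto\beta$, which lands in $\Gamma_{j-1}$ by the previous paragraph. Exactness at $Z'_j$ is immediate: $\pi(z)=0$ forces $\beta=0$, so $z=\alpha$ with $\partial(\alpha)=0$, i.e. $z\in\Image(\iota)$. For surjectivity of $\pi$, given $\beta\in\Gamma_{j-1}$, pick $\alpha'\in K_j$ with $\partial(\alpha')=f_0\beta$; then $z:=(-1)^{j}\alpha'+\beta\wedge e_0$ satisfies $\partial'(z)=(-1)^{j}f_0\beta+(-1)^{j-1}f_0\beta=0$, and $\pi(z)=\beta$.

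The only real difficulty is sign bookkeeping in the Leibniz formula and in producing the preimage, but since the sign convention is already fixed in the proof of Lemma \ref{lr2} the verification is mechanical; no depth or Noetherian hypothesis enters, which is why the statement is purely formal.
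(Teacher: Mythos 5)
Your proof is correct and follows essentially the same route as the paper's: decompose an element of $Z'_j$ into the summand containing $e_0$ and the summand not containing it, read off the cycle condition via the Leibniz rule to see the second component lands in $\Gamma_{j-1}$, and check exactness directly. The only cosmetic difference is that you write $\beta\wedge e_0$ where the paper writes $e_0\wedge w$, which just shuffles signs that disappear in the module $\Gamma_{j-1}$ anyway.
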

\begin{proof}

The Koszul complex $K_{\bullet}(f_1,f_2,...,f_r)$ is canonically a sub complex of $K_{\bullet}(f_0,f_1,...,f_r)$. We will denote the differential of both complexes by $d$. Let  $\{e_0,e_1,...,e_r\}$ be the canonical basis of $R^{r+1}$.  Every  $\theta \in Z'_j$ can be written uniquely as a sum $\theta = e_0\wedge w+v$ where $e_0$ does not appear in terms  $w$ and $v$. Since  $\theta \in Z'_j$
\begin{equation*}
0=d(\theta)=d(e_0\wedge w+v)=f_0w-e_0\wedge d(w)+d(v).
\end{equation*}
It follows that
$$
\Biggl\{\begin{array}{c}
e_0\wedge d(w)=0 \\
f_0w=-d(v)
\end{array}
\text{~~hence~~}
\Biggl\{\begin{array}{c}
d(w)=0 \\
f_0w=-d(v)
\end{array} \text{~~therefore~~} w\in \Gamma_{j-1}. $$

We define $\varphi :Z'_j\rightarrow \Gamma_{j-1}$ to be  $\varphi (\theta)=\varphi (e_0\wedge w+v)=w$. The homomorphism $\varphi$ is well defined, as the expression is unique. It is onto, since  for any  $w \in \Gamma_{j-1}$ there is $v\in K_j(f_1,f_2,...,f_r)$ such that $f_0w=-d(v)$. Thence  $\theta=e_0\wedge w+v \in K_j(f_0,f_1,...,f_r)$ is the preimage of $w$.

 Moreover, $Z_j=\ker(\varphi)$. On one hand, if $\theta \in Z'_j$ belongs to $Z_j$, then there is no $e_0$ in its decomposition in terms of the canonical basis of $R^{r+1}$, consequently $\theta=e_0\wedge w+v=v$ and $w=0$. On the other hand, if $0=d(\theta)=d(e_0\wedge w+v)=w$, then $\theta=v$. Therefore $\theta \in Z'_j\cap K_{j}(f_1,f_2,...,f_r)=Z_{j}$.
\end{proof}

The next Lemma is needed in the proof of the main theorem \ref{tsd}. Notice that the sliding depth condition does not pass automatically  from homologies to cycles because there is a level restriction, see Proposition \ref{psdsdc}.

\begin{Lemma} \label{sdc1} Let $(R,\fm)$ be a CM local ring of dimension $d$, $\fa=(f_1,f_2,...,f_s)$ and $I=(f_1,f_2,...,f_r)$  be ideals with $s\leq r$. Let  $J=(\fa:I)$ and $g=\Ht(I)=\Ht(\fa)\geq 1$. Suppose that $ht(J)\geq n \geq 1$ and that $I$ satisfies $\SD$, then for $k\geq s$ and $j\geq k-n+2$, 
\begin{equation*}
\depth (Z_{j}(f_1,f_2,..., f_{k}))\geq d-k+j+1.
\end{equation*}
\end{Lemma}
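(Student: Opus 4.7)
The plan is to perform a downward induction on $k$ from $r$ to $s$, mirroring the inductive structure of Proposition \ref{uak} but upgrading every bound by one, which is exactly the gap between Koszul homology and Koszul cycles. For the base case $k=r$, Proposition \ref{psdsdc}(2) turns the hypothesis $\SD$ on $I$ into $\SDC_1$, which gives $\depth(Z_j(f_1,\ldots,f_r))\geq\min\{d-r+j+1,d-g+2,d\}$. In the range $j\geq r-n+2$ with $j\leq r-g+1$ this already equals the target $d-r+j+1$; for $j>r-g$ one has $H_j(\ff)=0$, so $Z_j=B_j$, and downward induction on $j$ starting from $Z_r=(0:I)=0$ via the free short exact sequence $0\to Z_{i+1}\to K_{i+1}\to B_i\to 0$ yields $\depth(Z_j)\geq d\geq d-r+j+1$.

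For the inductive step, I fix $k$ with $r\geq k>s$ and assume the lemma at level $k$. Working with the Koszul complex of the shorter sequence $(f_1,\ldots,f_{k-1})$, I chase depth through the two short exact sequences
\[
0\to Z_j\to K_j\to B_{j-1}\to 0,\qquad 0\to B_{j-1}\to Z_{j-1}\to H_{j-1}\to 0,
\]
which combine to give
\[
\depth\bigl(Z_j(f_1,\ldots,f_{k-1})\bigr)\geq \min\bigl\{d,\ \depth\bigl(Z_{j-1}(f_1,\ldots,f_{k-1})\bigr)+1,\ \depth\bigl(H_{j-1}(f_1,\ldots,f_{k-1})\bigr)+2\bigr\}.
\]
For $j\geq k-n+2$, Proposition \ref{uak}(i) applied to the sequence $(f_1,\ldots,f_{k-1})$ provides $\depth(H_{j-1}(f_1,\ldots,f_{k-1}))\geq d-(k-1)+(j-1)=d-k+j$, so the third term is at least $d-k+j+2$, which is exactly the required bound $d-(k-1)+j+1$. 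An inner upward induction on $j$ supplies the second term: if we know the lemma's bound at $(k-1,j-1)$, then $\depth(Z_{j-1}(f_1,\ldots,f_{k-1}))+1\geq d-k+j+2$, and the first term $d$ dominates $d-k+j+2$ as soon as $j\leq k-2$, while $j\in\{k-1,k\}$ is trivial since $Z_{k-1}=0$ (because $\grade(f_1,\ldots,f_{k-1})\geq\grade(\fa)\geq 1$) and $Z_k=0$ at that level.

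The base of the inner upward induction and the boundary value $j=k-n+1$ are the most delicate points. When $j$ sits above the range where $H_{j-1}$ can be nonzero, i.e.\ $j\geq k-g+1$, the $H$-term contributes $\infty$ and the descent from $Z_{k-1}=0$ handled in the base case suffices. When $j=k-n+1$ is at the low end of the valid range, Proposition \ref{uak}(i) no longer controls $H_{j-2}$, so I would instead invoke Lemma \ref{luck}: applied with $f_0:=f_k$ and the ambient sequence $(f_1,\ldots,f_{k-1})$ it gives
\[
0\to Z_j(f_1,\ldots,f_{k-1})\to Z_j(f_1,\ldots,f_k)\to \Gamma_{j-1}\to 0,
\]
and Proposition \ref{uak}(ii) shows $f_k\in I$ annihilates $H_{j-1}(f_1,\ldots,f_{k-1})$ for $j\geq k-n+2$, hence $\Gamma_{j-1}=Z_{j-1}(f_1,\ldots,f_{k-1})$, allowing the bound to be pulled from the outer IH at level $k$ together with the inner IH.

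The hard part will be the simultaneous bookkeeping: the outer induction on $k$ gains one unit of depth per step while the inner induction on $j$ has to be seeded at the boundary of the range $j\geq(k-1)-n+2$, where neither Proposition \ref{uak}(i) nor the trivial vanishing $Z_{k-1}=0$ is directly available, so the argument must interleave Lemma \ref{luck} and the annihilator statement of Proposition \ref{uak}(ii) with the Koszul depth chase at level $k-1$ to transport the $+1$ gain consistently across the double induction.
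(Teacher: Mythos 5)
Your overall framework — downward induction on $k$ from $r$ to $s$, inner induction on $j$, invoking Proposition \ref{uak} and Lemma \ref{luck} — is the paper's, and your base case $k=r$ is essentially correct. But the inductive step as you describe it has a genuine gap, coming from running the inner induction in the wrong direction and from relying on the wrong short exact sequences for the main step.

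Your primary recursion $\depth(Z_j(f_1,\ldots,f_{k-1}))\geq\min\{d,\ \depth(Z_{j-1})+1,\ \depth(H_{j-1})+2\}$ feeds the bound at $j-1$ into the bound at $j$, so it must be seeded at the bottom of the range, $j=k-n+1$. At that point neither $Z_{k-n}(f_1,\ldots,f_{k-1})$ nor $H_{k-n}(f_1,\ldots,f_{k-1})$ is controlled (Proposition \ref{uak}(i) applies only to $H_l$ with $l\geq k-n+1$), so the upward induction has no valid seed. Your proposed patch at the boundary via Lemma \ref{luck} does not close this: the exact sequence $0\to Z_j(f_1,\ldots,f_{k-1})\to Z_j(f_1,\ldots,f_k)\to Z_{j-1}(f_1,\ldots,f_{k-1})\to 0$ is only available for $j\geq k-n+2$ (you need $f_k$ to annihilate $H_{j-1}$ of the shorter sequence), and to estimate the left-hand term $Z_j(f_1,\ldots,f_{k-1})$ from it you still need $\depth(Z_{j-1}(f_1,\ldots,f_{k-1}))$, the very quantity you are missing — the argument is circular. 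If instead you try to run the standard Koszul double short exact sequence chase downward, you get $\depth(Z_{j-1})\geq\min\{d,\ \depth(Z_j)-1,\ \depth(H_{j-1})\}$, and the $H$-term contributes only $d-(k-1)+(j-1)$, one unit short of the target $d-(k-1)+(j-1)+1$. So the standard Koszul chase, in either direction, cannot supply the extra $+1$ that distinguishes the cycle bound from the homology bound.

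What the paper does, and what fixes the argument, is to make Lemma \ref{luck} the engine of the \emph{entire} inner induction, run \emph{downward} in $j$. From the short exact sequence above one reads off the right-hand term: $\depth(Z_{j-1}(f_1,\ldots,f_{k-1}))\geq\min\{\depth(Z_j(f_1,\ldots,f_k)),\ \depth(Z_j(f_1,\ldots,f_{k-1}))-1\}$. The first term is exactly $d-(k-1)+(j-1)+1$ by the outer induction hypothesis at level $k$; the second is at least $d-(k-1)+j+1-1=d-(k-1)+(j-1)+1$ by the inner induction hypothesis at the higher index $j$. The descent is seeded trivially at $j\geq(k-1)-g+1$, where $Z_j(f_1,\ldots,f_{k-1})$ is zero or a high syzygy of the tail of the Koszul complex, and each step is legitimate because $j\geq k-n+2$ throughout the descent down to $j-1=k-n+1$. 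So the crucial point you are missing is that the short exact sequence of \emph{cycles} furnished by Lemma \ref{luck} together with the annihilation from Proposition \ref{uak} must replace the ordinary Koszul depth chase at every step, since it is precisely this replacement — trading $H_{j-1}$ for $Z_j$ of the longer sequence — that imports the outer induction hypothesis and supplies the missing unit of depth.
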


\begin{proof}

By Proposition \ref{uak}, $I H_j(f_1,f_2,...,f_{k})=0$ whenever $s\leq k \leq r$ and $j\geq k-n+2$. Therefore by Lemma \ref{luck},  we have the following exacts sequences for $k-1\geq s$ and $j-1\geq (k-1)-n+2$
\begin{eqnarray*}
\xymatrix{
0\ar[r] &Z_{j}(f_1,f_2,...,f_{k-1})\ar[r] & Z_{j}(f_1,f_2,...,f_{k})\ar[r] & Z_{j-1}(f_1,f_2,...,f_{k-1})\ar[r]& 0.}
\end{eqnarray*}
 We will prove the result by a recursive induction on $k$. If $k=r$, then it follows from Proposition \ref{psdsdc}. Suppose that the result holds for $k\geq s+1$. Since $Z_{j}(f_1,f_2,...,f_{k-1})$ satisfies  $\SDC_1$ for $j\geq (k-1)-g+1$, we can use a new induction on $j$ to conclude that:
\begin{equation*}
\depth (Z_{j-1}(f_1,f_2,...,f_{k-1}))\geq d-(k-1)+(j-1) +1=d-k+j+1.
\end{equation*}
\end{proof}


We are now ready to prove the main Theorem of this section. As we mentioned at the beginning of the section, this fact was known just in the presence of the $G_{\infty}$ condition, under which the problem reduced to study the properties of ideals generated by  $d$-sequences. Here the only assumption is the sliding depth and that $\depth(R/I)\geq d-s$.

\begin{Theorem}\label{tsd} Let $(R,\fm)$ be a CM local ring of dimension $d$ and   $J=(\fa:I)$ be an $s$-residual intersection with $s\geq\Ht(I)\geq 1$. If $I$ satisfies $\SD$ and $\depth (R/I)\geq d-s$, then $\fa$ satisfies  $\SD$  as well.
\end{Theorem}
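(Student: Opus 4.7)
The plan is to verify the sliding depth condition for $\fa$, namely $\depth(H_j(f_1,\ldots,f_s;R))\geq \min\{d-g,d-s+j\}$ for every $j\geq 0$, by treating the ranges $j=0$, $j\geq 2$, and $j=1$ separately. First I would record that $\Ht(\fa)=g=\Ht(I)$: the inclusion $\fa\subseteq I$ gives $\Ht(\fa)\leq g$, and any prime $\fp\supseteq\fa$ with $\Ht(\fp)<g$ would satisfy $I\not\subseteq\fp$, forcing $J_\fp=\fa_\fp\subsetneq R_\fp$ and hence $\fp\supseteq J$, contradicting $\Ht(J)\geq s\geq g$. Corollary \ref{cunmixed}(1) then gives $\depth(R/\fa)=d-s$, which is exactly $\min\{d-g,d-s\}$ and settles the case $j=0$.

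For $j\geq 2$, I would invoke Proposition \ref{uak} with $n=s$ and $k=s$. Its input hypothesis, a lower bound $d-r+l$ on $\depth(H_l(\ff;R))$ for $l\geq r-s+2$, is supplied free of charge by $\SD$ combined with the vanishing $H_l(\ff;R)=0$ for $l>r-g$ (since $\grade(I)=g$ in the CM ring $R$). The output is $\depth(H_j(\fa))\geq d-s+j$ for every $j\geq 2$, and a case split $j\geq s-g$ versus $j<s-g$ shows that this bound dominates $\min\{d-g,d-s+j\}$ in both ranges.

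The pivotal case is $j=1$, which Proposition \ref{uak} does not reach. Here I would first apply Lemma \ref{sdc1} with $k=s$, $n=s$, $j=2$ to get $\depth(Z_2(\fa))\geq d-s+3$; next, since $\depth(\fa)\geq d-s+1$ by the previous paragraph together with $0\to\fa\to R\to R/\fa\to 0$, a depth chase on $0\to Z_1\to K_1\to\fa\to 0$ yields $\depth(Z_1(\fa))\geq d-s+2$. Then $0\to Z_2\to K_2\to B_1\to 0$ produces $\depth(B_1(\fa))\geq d-s+2$, and finally $0\to B_1\to Z_1\to H_1(\fa)\to 0$ gives $\depth(H_1(\fa))\geq \min\{\depth Z_1,\depth B_1-1\}=d-s+1$, which matches $\min\{d-g,d-s+1\}$ in every sub-case of how $g$ compares to $s$. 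The degenerate case $s=1$ is trivial: then $\fa$ is principal and its generator is a nonzerodivisor (having grade $\geq 1$), so $H_j(\fa)=0$ for $j\geq 1$. The real obstacle is precisely the index $j=1$, where Proposition \ref{uak} provides no information and everything hinges on the Koszul cycle/boundary depth chase above; the rest of the argument is a clean reduction to results already established in the paper.
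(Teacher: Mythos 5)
Your proof is correct. You verify the sliding-depth bound for $\fa$ directly on Koszul homology, splitting into $j=0$ (via Corollary \ref{cunmixed}), $j\geq 2$ (via Proposition \ref{uak} applied with $n=k=s$), and $j=1$ (via Lemma \ref{sdc1} to bound $\depth(Z_2(\fa))$ followed by the cycle/boundary depth chase $0\to Z_1\to K_1\to\fa\to 0$, $0\to Z_2\to K_2\to B_1\to 0$, $0\to B_1\to Z_1\to H_1(\fa)\to 0$), with $s=1$ handled separately. Each step checks out, including the observation that the hypothesis of Proposition \ref{uak} is furnished by $\SD$ together with the vanishing of $H_l(\ff)$ for $l>r-g$.

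The paper's proof is shorter and runs through cycles rather than homologies. It applies Lemma \ref{sdc1} with $n=s$ to obtain $\depth(Z_j(\fa))\geq d-s+j+1$ for $j\geq 2$, obtains $\depth(Z_1(\fa))\geq d-s+2$ exactly as you do from $\depth(R/\fa)=d-s$ (Corollary \ref{cunmixed}), and then observes that these cycle bounds are precisely the condition $\SDC_1$ for $\fa$; the implication $\SDC_1\Rightarrow\SD$ from Proposition \ref{psdsdc}(4) finishes in one stroke. The difference is that your approach redoes by hand, for the single index $j=1$, the passage from cycle depths to homology depths that Proposition \ref{psdsdc}(4) packages once and for all. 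What your route buys is a proof that never mentions $\SDC_1$ explicitly and makes visible exactly which Koszul homology of $\fa$ is the delicate one ($H_1$); what the paper's route buys is brevity, since the $\SDC_1\Rightarrow\SD$ implication absorbs the $j=1$ depth chase automatically. Both rely on the same underlying machinery (Proposition \ref{uak}, Lemma \ref{sdc1}, Corollary \ref{cunmixed}, Proposition \ref{psdsdc}), so this is a reorganization rather than a genuinely new argument.
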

\begin{proof}
We apply  Lemma \ref{sdc1} for $n=s$, it then follows that
\begin{equation*}
\depth (Z_{j}(f_1,f_2,...,f_{s}))\geq d-s+j +1
\end{equation*}
for $j\geq 2$. The hypothesis $\depth (R/I)\geq d-s$ enables us to apply Corollary \ref{cunmixed} which in turn implies that $\depth (R/\fa)\geq d-s$; so that $\depth (\fa)\geq d-s+1$ and hence $\depth (Z_{1}(\fa)) \geq d-s +2$- that is  $\fa$ satisfies the $\SDC_1$ condition which is  equivalent to $\SD$ by Proposition \ref{psdsdc}.
\end{proof}
In the next proposition we prove a partial converse of Theorem \ref{tsd}.

\begin{Proposition}\label{psdinverse} Let $(R,\fm)$ be a CM local ring of dimension $d$, $\fa=(f_1,f_2,...,f_s)$ and  $I=(f_1,f_2,...,f_r)$ with $s\leq r$. Let $J=(\fa:I)$ be  an $s$-residual intersection. If $\fa$ satisfies  $\SD$, then  for any $s\leq k \leq r$ and  $j\geq k-s+1$,
\begin{itemize}
\item[(i)] $\depth (H_{j}(f_1,f_2,..., f_{k}))\geq d-k+j$ and
\item[(ii)] $\depth (Z_{j}(f_1,f_2,..., f_{k}))\geq d-k+j+1$.
\end{itemize}

 \end{Proposition}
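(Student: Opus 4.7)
The plan is to argue by induction on $k$, enlarging the sequence by one generator at each step, starting from the base case $k=s$ (where the sequence is just $\fa$). At the base case, assertion (i) in the range $j\ge 1$ is precisely the sliding depth condition on $\fa$, and assertion (ii) is the $\SDC_1$ condition on $\fa$, which is implied by $\SD$ via Proposition \ref{psdsdc}(2).

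The inductive step from $k$ to $k+1$ rests on the annihilation
$$f_{k+1}\,H_j(f_1,\ldots,f_k)=0 \quad\text{for every }j\ge k-s+1.$$
To establish this, at any prime $\fp$ of height $<s$ one has $\fp\not\supseteq J$; choosing $u\in J\setminus\fp$ gives $uI\subseteq\fa$ and hence $I_{\fp}\subseteq\fa_{\fp}$, so $(f_{k+1})_{\fp}\in\fa_{\fp}\subseteq(f_1,\ldots,f_k)_{\fp}$ and thus annihilates the Koszul homology $H_j((f_1,\ldots,f_k)_{\fp})$. Hence the submodule $f_{k+1}H_j(f_1,\ldots,f_k)$ vanishes locally at every prime of height $<s$. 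On the other hand, the induction hypothesis (i) at level $k$ together with the Cohen--Macaulay property of $R$ forces $\Ht(\fp)\le k-j\le s-1$ for every $\fp\in\Ass(H_j(f_1,\ldots,f_k))$, hence for every associated prime of $f_{k+1}H_j(f_1,\ldots,f_k)$. These two facts are incompatible unless $f_{k+1}H_j(f_1,\ldots,f_k)=0$.

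With this annihilation in hand, the Koszul mapping-cone long exact sequence collapses, for $j\ge k-s+2$, to
$$0\to H_j(f_1,\ldots,f_k)\to H_j(f_1,\ldots,f_{k+1})\to H_{j-1}(f_1,\ldots,f_k)\to 0,$$
and Lemma \ref{luck} (with $f_{k+1}$ playing the role of $f_0$) collapses analogously to
$$0\to Z_j(f_1,\ldots,f_k)\to Z_j(f_1,\ldots,f_{k+1})\to Z_{j-1}(f_1,\ldots,f_k)\to 0,$$
because the annihilation forces $\Gamma_{j-1}=Z_{j-1}(f_1,\ldots,f_k)$. A direct depth chase through these two short exact sequences, fed by the induction hypothesis, yields the desired bounds at level $k+1$. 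The hard part will be establishing the annihilation step; the rest is clean bookkeeping, and the overall structure mirrors the proofs of Proposition \ref{uak} and Lemma \ref{sdc1} but traverses the sequence in the opposite direction.
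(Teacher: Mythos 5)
Your proof is correct and takes essentially the same route as the paper: upward induction on $k$ starting at $k=s$, establishing the annihilation $f_{k+1}H_j(f_1,\ldots,f_k)=0$ in the relevant range, then feeding the resulting short exact sequences (the collapsed Koszul sequence and the Lemma~\ref{luck} sequence with $\Gamma_{j-1}=Z_{j-1}$) into a depth chase. Where the paper merely says the annihilation follows ``by a similar argument as in the proof of Proposition~\ref{uak},'' you make this explicit via the clean associated-prime argument: the induction hypothesis gives $\depth(H_j)\geq d-s+1$, hence in the CM ring $R$ every associated prime of $H_j$ has height $\leq s-1$, while $\Ht(J)\geq s$ forces $I_\fp=\fa_\fp$ at all such primes, so the submodule $f_{k+1}H_j$ has no associated prime and must vanish. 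This is cleaner than a verbatim repetition of Proposition~\ref{uak}'s recursive argument, and it is the right simplification: here the depth bound is supplied by the induction hypothesis rather than established simultaneously, so no cycle/boundary bookkeeping is needed.
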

 \begin{proof}

We will use induction on $k$. If $k=s$, then $(ii)$ is clear by Proposition \ref{psdsdc}. Suppose that the result holds for $k\geq s+1$. Then by induction hypothesis, we have
\begin{equation*}
\depth (H_{j}(f_1,f_2,..., f_{k}))\geq d-k+j\geq d-s+1
\end{equation*}
By a similar argument as in the proof of Proposition \ref{uak}, $IH_{j}(f_1,f_2,..., f_{k})=0$ for all $j\geq k-s+1$. Then we have the short exact sequences below provided by Lemma \ref{luck} for any $j\geq k+1-s+1$
 \begin{eqnarray*}
\xymatrix{
0\ar[r] &H_{j}(f_1,f_2,...,f_{k})\ar[r] & H_{j}(f_1,f_2,...,f_{k+1})\ar[r] & H_{j-1}(f_1,f_2,...,f_{k})\ar[r]& 0}
\end{eqnarray*}
and
\begin{eqnarray*}
\xymatrix{
0\ar[r] &Z_{j}(f_1,f_2,...,f_{k})\ar[r] & Z_{j}(f_1,f_2,...,f_{k+1})\ar[r] & Z_{j-1}(f_1,f_2,...,f_{k})\ar[r]& 0.}
\end{eqnarray*}

A depth chasing then completes the proof.

 \end{proof}
The next corollary of Theorem \ref{tsd} shows a tight relation between the uniform annihilator of non-zero Koszul homology modules and residual intersections.
\begin{Corollary}\label{creskosan} Let $(R,\fm)$ be a CM local ring of dimension $d$, $I$ satisfies $\SD$ and $\depth (R/I)\geq d-s$ with $s\geq\Ht(I)\geq 1$. Let $J=(\fa:I)$ be an $s$-residual intersection and use $H_j(\fa)$ to denote the $j$'th Koszul homology module with respect to a minimal generating set of $\fa$. Then
\begin{equation}\label{inclusion}\tag{$\star$}I\subseteq \bigcap_{\substack{j\geq 1}}\Ann(H_j(\fa)).\end{equation}
Furthermore, the equality happens in the following cases, if  $\depth(R/I)\geq d-s+1$ (hence $s\geq g+1$).
 \begin{itemize}
  \item{ If $\mu(I_{\fp})\leq s-i$ for a fix positive  integer $i$ and for all $\fp \in \Ass(R/I)$, then $I=\Ann(H_j(\fa))$ for all $1\leq j\leq i$.}
  \item{ If $R$ is Gorenstein and $\Ass(R/I)=\min(R/I)$ then $I= \bigcap_{\substack{j\geq 1}}\Ann(H_j(\fa))$.}
 \end{itemize}
\end{Corollary}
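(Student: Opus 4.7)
The plan is to deduce the inclusion (\ref{inclusion}) as a consequence of Theorem~\ref{tsd}: that result gives that $\fa$ itself satisfies $\SD$, whence
\[\depth(H_j(\fa))\;\geq\;\min\{d-g,\;d-s+j\}\]
for all $j\geq 0$, and so for $j\geq 1$ every $\fp\in\Ass(H_j(\fa))$ has $\Ht(\fp)\leq\max\{g,\,s-j\}\leq s$ and satisfies $\fp\supseteq\fa$. Pick $b\in I$; I then check at each such $\fp$ that $b\cdot H_j(\fa)_\fp=0$. If $\Ht(\fp)<s$, then $J\not\subseteq\fp$ (since $\Ht(J)\geq s$), hence $\fa_\fp=I_\fp$ and $b\in\fa_\fp$ automatically annihilates $H_j(\fa)_\fp$. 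If $\Ht(\fp)=s$, the bound forces $s=g$ and $\fp$ must be a minimal prime of $\fa$; then $\fa R_\fp$ is $\fp R_\fp$-primary in a CM local ring of dimension $g=s$, so $\mu(\fa_\fp)=g$ is forced, the $a_i$ form a system of parameters and hence a regular sequence, and $K_\bullet(\fa)_\fp$ is acyclic---contradicting $\fp\in\Ass(H_j(\fa))$. Thus $bH_j(\fa)_\fp=0$ in every case; since $\Ass(bH_j(\fa))\subseteq\Ass(H_j(\fa))$, this gives $bH_j(\fa)=0$, and letting $b$ range over $I$ yields (\ref{inclusion}).

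For the first equality statement, fix $i$ with $\mu(I_\fp)\leq s-i$ on $\Ass(R/I)$. The hypothesis $\depth(R/I)\geq d-s+1$ forces $\Ht(\fp)\leq s-1$ at every such $\fp$, so $\fa_\fp=I_\fp$ as above. Writing $\mu:=\mu(\fa_\fp)\leq s-i$, an elementary change of basis of $R_\fp^s$ turns the $s-\mu$ redundant generators into zeros, splitting $K_\bullet(\fa)_\fp\simeq K_\bullet(a_1,\dots,a_\mu)\otimes K_\bullet(0,\dots,0)$; K\"unneth then yields
\[H_j(\fa)_\fp\;=\;\bigoplus_{j_1+j_2=j}H_{j_1}(a_1,\dots,a_\mu)\otimes R_\fp^{\binom{s-\mu}{j_2}}.\]
For $1\leq j\leq i\leq s-\mu$ the summand $(j_1,j_2)=(0,j)$ is the nonzero module $(R_\fp/I_\fp)^{\binom{s-\mu}{j}}$, whose annihilator equals $I_\fp$, while all other summands have annihilator containing $I_\fp$; hence $\Ann(H_j(\fa))_\fp=I_\fp$ for $1\leq j\leq i$. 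Globally, $\Ann(H_j(\fa))/I$ is an $R/I$-submodule, so $\Ass(\Ann(H_j(\fa))/I)\subseteq\Ass(R/I)$, and it localises to $0$ at each $\fp\in\Ass(R/I)$; therefore $\Ann(H_j(\fa))=I$.

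For the Gorenstein bullet the same strategy applies, localising at the primes $\fp\in\min(R/I)=\Ass(R/I)$ and setting $h=\Ht(\fp)\leq s-1$. When $\mu(I_\fp)\leq s-1$, the Case~A computation already gives $\Ann(H_1(\fa))_\fp=I_\fp$. In the extremal case $\mu(I_\fp)=s$ (which forces $h<s$), the plan is to invoke the classical identification of the top non-vanishing Koszul homology with the canonical module,
\[H_{s-h}(\fa;R_\fp)\;\cong\;\omega_{R_\fp/I_\fp}\;=\;\Ext^h_{R_\fp}(R_\fp/I_\fp,R_\fp),\]
which arises from the self-duality $\Hom(K_\bullet(\fa),R_\fp)\cong K_{s-\bullet}(\fa)$ combined with local duality in the Gorenstein ring $R_\fp$; by Matlis duality the annihilator of $\omega_{R_\fp/I_\fp}$ is $I_\fp$ and $s-h\geq 1$, so $\Ann(H_{s-h}(\fa))_\fp=I_\fp$. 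In either case there is some $j(\fp)\geq 1$ with $\Ann(H_{j(\fp)}(\fa))_\fp\subseteq I_\fp$, whence $(\bigcap_{j\geq 1}\Ann(H_j(\fa)))_\fp\subseteq I_\fp$, and the submodule-of-$R/I$ argument of the preceding paragraph closes the proof. The main obstacle is precisely this Gorenstein--duality identification of $H_{s-h}(\fa;R_\fp)$ with $\omega_{R_\fp/I_\fp}$ when $\mu(I_\fp)=s$, which has no direct analogue outside the Gorenstein setting and is what forces the second bullet to be stated separately.
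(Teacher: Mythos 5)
Your proof is correct and follows essentially the same route as the paper's: invoke Theorem~\ref{tsd} to pass $\SD$ from $I$ to $\fa$, use the resulting depth bound to force $\Ht(\fp)\le s-1$ for every $\fp\in\Ass(H_j(\fa))$ with $j\ge 1$ so that $I_\fp=\fa_\fp$ kills $H_j(\fa)_\fp$, and for the equality statements localize at $\Ass(R/I)$ and use the Koszul direct-summand (K\"unneth) computation in the first bullet and the identification of the top Koszul homology with the unmixed part of $\fa_\fp$ in the Gorenstein bullet. The only cosmetic deviations are that you spell out the K\"unneth decomposition and the globalization step explicitly, and that you treat a vacuous sub-case $\Ht(\fp)=s$ in the inclusion argument (for $j\ge 1$ the $\SD$ bound already gives $\Ht(\fp)\le s-j\le s-1$, since $H_j(\fa)=0$ once $j>s-g$).
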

\begin{proof} By Corollary \ref{cunmixed}, $\fa$ is minimally generated by $s$ elements. According to Theorem \ref{tsd}, for any $j$, $\depth(H_j(\fa))\geq d-s+j$, in particular  $\Ht(\fp)\leq s-1$ for any $\fp \in \Ass(H_j(\fa))$ with $j\geq 1$ c.f. \cite[1.2.13]{BH}.

To show that $IH_j(\fa)=0$, we show that it has no associated prime. Notice that $IH_j(\fa)\subseteq H_j(\fa)$, hence for any $\fp \in \Ass(IH_j(\fa))$, $\Ht(\fp)\leq s-1$ for which $I_{\fp}=\fa_{\fp}$. Therefore $(IH_j(\fa))_{\fp}=0$ as desired.

To see the equality, we localize at associated primes of $I$ which have height at most $s-1$ so that $I_{\fp}=\fa_{\fp}$ for them. Now let  $(a_1,\cdots,a_s)$ be a minimal generating set of $\fa$. If $\mu(I_{\fp})\leq s-i$ then $H_j(a_1,\cdots,a_s)_{\fp}$ has a direct summand of $(R/\fa)_{\fp}$ for all $1\leq j\leq i$ and thus its annihilator is $\fa_{\fp}=I_{\fp}$. In the case where $R$ is Gorenstein and $\Ass(R/I)=\min(R/I)$, we see that locally at the associated primes of $R/I$, $I_{\fp}$ is unmixed and moreover $(a_1,\cdots,a_s)_{\fp}$ is not a regular sequence since $s>\Ht(\fp)$. Hence for $t=\Ht(I_{\fp})$, $\Ann(H_{s-t}(\fa_{\fp}))$ is the unmixed part of $\fa_{\fp}=I_{\fp}$ which is $I_{\fp}$.
\end{proof}

The next amazing example shows a benefit of relaxing the $G_s$ condition in our Theorems. However we took this example from \cite{EU} wherein Eisenbud and Ulrich used it to show the necessity  of the $G_s$ condition in their theorems.
\begin{Example} Let $(R,\fm)=\mathbb{Q}[x_1,\cdots,x_5]_{(x_1,\cdots,x_5)}$, $I=I_4(N)$ where
$$ N=
\left(\begin{array}{ccccclllll}
x_2&x_3&x_4&x_5\\x_1&x_2&x_3&x_4\\0&x_1&x_2&0\\0&0&x_1&x_2\\0&0&0&x_1
\end{array}\right).
$$
Let $\Delta_i$ be the minors obtained by omitting $i$'th row of $N$, we then consider $\fa$ to be the ideal generated by  entries of
$$\left(\begin{array}{ccccclllll}
\Delta_1&-\Delta_2&\Delta_3&-\Delta_4&\Delta_5
\end{array}\right)
\left(\begin{array}{ccccclllll}
x_1&0&0&0&0\\0&x_5&0&0&0\\x_3&x_4&x_5&x_3&0\\0&x_3&x_4&x_5&0\\0&x_2&x_3&x_4&x_5
\end{array}\right).
$$
$J=(\fa:I)$ is a $5$-residual intersection of $I$. $I$ is a perfect ideal of height $2$ hence it is SCM. Easily one can see that any prime that contains $I$ must contain $\fp=(x_1,x_2)$ that is $I$ is $\fp$-primary, since it is unmixed. The minimal presentation of $I$ is given by $N$; so that ${\rm Fitt}_4(I)=\fm$ as well ${\rm Fitt}_3(I)\not\subset\fp$ because $x_4^2-x_3x_5\not \in \fp$, ${\rm Fitt}_2(I)\subseteq\fp$ because any combination of $3$ "diagonal" elements contains either $0$ or $x_1$ or $x_2$. Thus $\mu(I_{\fp})=3$, in particular $I$ does not satisfy $G_3$. However it completely suits  in our Corollary \ref{creskosan} for $s=5$ and $i=2$, hence $\Ann(H_1(\fa))=\Ann(H_2(\fa))=I$. Also, since $\Ht(J)=5$, any prime ideal of height $2$ which contains $\fa$ must contain $I$ that means $I=\fa^{unm}$. Since the ring is Gorenstein, we then have $I=\fa^{unm}= \Ann(H_3(\fa))$ (our Macaulay2 system was not able to calculate $\Ann(H_1(\fa))$ directly!).
\end{Example}
We notice that Corollary  \ref{creskosan} presents a universal property for any ideal $\fa$ such that $J=(\fa:I)$ is an $s$-residual intersection of $I$. For instance in the above example for any other $\fa$ such that $J=(\fa:I)$ is a $5$-residual intersection of $I$, we have $\Ann(H_1(\fa))=\Ann(H_2(\fa))=\Ann(H_3(\fa))=\fa^{unm}=I$.

An unpublished result of G.Levin, \cite[Theorem 5.26]{V2} yields in  \cite[Corollary 2.7]{CHKV} that {\it If $R$ is a Noetherian ring and $\fa$ is an ideal of finite projective dimension $n$, then $(\Ann(H_1(\fa)))^{n+1}\subseteq \fa$. }  Combining this result with the above Corollary \ref{creskosan}, one faces the strange fact that $\fa$ and $I$ have a same radical if one adopts  the conditions of Corollary \ref{creskosan}. However the following simple example (taken from \cite{Hu}) shows that \cite[Corollary 2.7]{CHKV} is not correct, and thus the Levin's result is not true.
\begin{Example}Let $X= \left(\begin{array}{ccccclllll}
x_1&x_3&x_5&x_7\\x_2&x_4&x_6&x_8 \end{array}\right)$
 be a generic $2\times 4$ matrix,
$J=I_2(X)$, $\fa=(x_1x_4-x_2x_3,  x_1x_6-x_2x_5,  x_1x_8-x_2x_7)$ and $I=(x_1,x_2)$. Then
$J=\fa:I$ is a geometric $3-$residual intersection of $I$, $J$ is prime and  $\fa= I \cap J$ hence  $\fa$ is radical.
Furthermore $\Ann(H_1(\fa))=\fa^{unm}=I$ which is not contained in $\fa$

A simpler example would be the following degeneration of the above example. Let $\fa=(y_1^2-y_2^2,y_1y_3,y_2y_3)\in k[y_1,\cdots,y_4]$. Then $\Ann(H_1(\fa))=(y_1,y_2)\neq {\rm rad}(\fa)$. 
\end{Example}


At the end, we make a conjecture and ask a question. The conjecture will imply in turn  the Cohen-Macaulayness of algebraic residual intersections for ideals with sliding depth condition.
\begin{conj}\label{Conj1} If $(R,\fm)$ is a CM local ring of dimension $d$ and  $I$ satisfies $\SD$  with  $\depth (R/I)\geq d-s$. Then the disguised $s$-residual intersection  of $I$ coincides with the algebraic $s$-residual intersection.
\end{conj}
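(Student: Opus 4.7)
The plan is to use Cohen--Macaulayness of $R/K$, guaranteed by Theorem \ref{t2}, to reduce the identity $K=J$ to a collection of codimension-$s$ local questions. Since $K\subseteq J$, $R/K$ is CM of dimension $d-s$, and the two ideals share a radical, every associated prime of $J/K$ (as a submodule of $R/K$) is a minimal prime of $K$ of height exactly $s$, and these coincide with the minimal primes of $J$. Thus it suffices to establish $K_\fp=J_\fp$ for every such $\fp$.

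Two cases arise. If $I\not\subseteq\fp$, then $I_\fp=R_\fp$ and hence $J_\fp=\fa_\fp$; in this situation some $f_i$ becomes a unit locally, the Koszul cycles $Z_\bullet(\ff)_\fp$ become split, and a direct inspection of $_0\mathcal{Z}^{+}_{\bullet}$ after localization gives $K_\fp=\fa_\fp$ as well. The substantive case is $\fp\supseteq I+J$ with $\Ht(\fp)=s$. After localization the sliding depth condition is preserved by Proposition \ref{psdsdc}(1), and $R_\fp$ is CM of dimension $s$; the strategy is to reduce this local problem to the cyclic quotient case $I_\fp=\fa_\fp+(b)$ for some $b\in I$, where Theorem \ref{thr2} identifies $H_0(_0\mathcal{Z}^{+}_{\bullet})$ with $R_\fp/(\fa_\fp:b)$ and thereby forces the disguised and algebraic residuals to coincide.

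A concrete route to this reduction is to select elements $b_1,\dots,b_m\in I_\fp$ (for instance sufficiently generic $R_\fp$-linear combinations of $f_1,\dots,f_r$) so that $J_\fp=\bigcap_{i=1}^{m}(\fa_\fp:b_i)$. Theorem \ref{thr2} applied to each cyclic pair gives that the single-generator disguised residual $K^{(i)}$ of $\fa+(b_i)$ with respect to $\fa$ equals $\fa_\fp:b_i$. If one could establish the intersection formula $K_\fp=\bigcap_{i=1}^{m}K^{(i)}$, the conjecture would follow. An alternative, potentially stronger route is to prove that all higher differentials in the spectral sequence (\ref{E2Hor}) landing in $(^{r}\E_{hor}^{0,0})_{[0]}$ for $r\geq 2$ vanish in codimension $s$ under the sliding depth hypothesis; this would collapse the filtration to $K_\fp=(^{2}\E_{hor}^{0,0})_{[0]}=\bigcup_i(\bm{\gamma} Sym_R^i(I):_R Sym_R^{i+1}(I))_\fp$ and show this union agrees with $J_\fp$, reconciling the three notions of residual intersection discussed in the introduction.

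The main obstacle is precisely executing either of these reductions. The disguised residual intersection is defined through the $\mathcal{Z}$-complex attached to a \emph{full} generating set of $I$, and there is no evident functorial comparison between $K$ and the $K^{(i)}$ built from the subideals $\fa+(b_i)$: the Koszul cycles $Z_\bullet(\ff)$ on which the complex rests change drastically when the generating set is altered. On the other hand, proving vanishing of the higher spectral-sequence differentials would demand depth estimates on the modules $H^j_{\frak{g}}(H_i(\mathcal{D}_\bullet))$ that sliding depth by itself has not been shown to furnish. Proposition \ref{PGs} resolves the conjecture in the SCM case by deforming to a $G_s$ ideal and descending; extending such a deformation argument to the general SD setting---perhaps via a universal generic presentation in which $G_s$-type obstructions disappear---appears to be the most promising line of attack, but requires a new specialization principle beyond what is currently available in the literature.
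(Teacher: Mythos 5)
This statement is Conjecture~\ref{Conj1} in the paper: it is posed as an open problem, not a theorem, and the authors provide no proof of it. The paper establishes only partial cases: the arithmetic residual case (quoting \cite[Theorem 2.11]{Ha} inside Definition~\ref{DDisguised}), and the case where $I$ is SCM and evenly linked to a $G_s$ ideal (Proposition~\ref{PGs}, via deformation). There is therefore no ``paper's own proof'' to compare your attempt against.

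Your proposal is an honest strategic analysis rather than a proof, and you acknowledge this. The reduction to height-$s$ primes containing $I+J$ is sound --- given Theorem~\ref{t2}, $R/K$ is CM of dimension $d-s$, $K$ and $J$ share a radical, and equality can be tested locally at the minimal primes of $J$; in the non-geometric locus this is exactly where the conjecture has content. The two lines of attack you sketch are both natural, and the obstacles you name are precisely the ones that leave the conjecture open: (i) the disguised residual $K$ is built from $\mathcal{Z}_\bullet(\ff)$ for a \emph{fixed} generating set of the full ideal $I$, and there is no comparison morphism relating this to the complexes built from the subideals $\fa+(b_i)$, so Theorem~\ref{thr2} (which identifies $H_0(_0\mathcal{Z}^+_\bullet)$ with $R/(\fa:b)$ only when $I/\fa$ is principal \emph{and} the generating set of $I$ is $\{b,a_1,\dots,a_s\}$) does not pass through an intersection formula $K_\fp = \bigcap_i K^{(i)}$; and (ii) degeneration of the horizontal spectral sequence (\ref{E2Hor}) at page two in codimension $s$ would require depth bounds on $H^j_{\frak{g}}(H_i(\mathcal{D}_\bullet))$ that SD alone is not known to supply --- this is exactly what the arithmetic hypothesis in \cite[Theorem 2.11]{Ha} is used to circumvent. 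One small caution in the geometric locus $I\not\subseteq\fp$: the clean way to see $K_\fp=\fa_\fp$ there is not by ``direct inspection of split Koszul cycles'' but by noting that the residual becomes arithmetic after localization, so the cited result in \cite{Ha} already gives $K_\fp=J_\fp=\fa_\fp$. Overall, your account is a faithful map of the difficulty; no proof is present, nor does the paper claim one.
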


Here is a peculiar question  arises from Corollary \ref{creskosan}.
\begin{Question}\label{qinclusion} Does the equality hold in (\ref{inclusion}), if $s>\Ht(I)$?
\end{Question}

Finally we present, without proof (a proof is based on Proposition \ref{uak}), the following proposition which is  an effort toward Question \ref{qinclusion}.

 \begin{Proposition}\label{PinterannHi} Let $(R,\fm)$ be a CM local ring of dimension $d$, $\fa=(f_1,f_2,...,f_s)$, $I=(f_1,f_2,...,f_r)$ ideals with $s\leq r$ and  $\Ht(I)=\Ht(\fa)\geq 1$. Suppose that $\Ht(\fa:I)\geq n \geq 1$.
 \begin{itemize}
 \item If $s+1\leq r$ and $\depth (R/I)\geq d-n+1$, then $\Ann (H_j(f_1,f_2,...,f_r))=I$  for $0\leq j \leq r-s$.
 \item If $I$ satisfies  $\SD $, then for $ \mu \geq s $ and  $\mu-g\geq j\geq \mu -n +2$,
\begin{equation*}
\Ann (H_j(f_1,f_2,\cdots,f_{\mu}))= \bigcap_{i=0}^{\mu -s}\Ann (H_{j-i}(f_1,f_2,\cdots,f_{s}))
\end{equation*}

 \end{itemize}
  \end{Proposition}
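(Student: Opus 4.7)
The common foundation is Proposition \ref{uak}, which supplies $I\subseteq \Ann(H_l(f_1,\dots,f_k))$ for $s\le k\le r$ and $l\ge k-n+2$; in particular each newly adjoined generator $f_{k+1}$ annihilates $H_l(f_1,\dots,f_k)$ in this range. Writing $H_l^{(k)}:=H_l(f_1,\dots,f_k)$, whenever $f_{k+1}$ acts as zero on both $H_l^{(k)}$ and $H_{l-1}^{(k)}$, the Koszul long exact sequence arising from the mapping cone of $f_{k+1}:K_\bullet^{(k)}\to K_\bullet^{(k)}$ degenerates into short exact sequences
\[
0\to H_l^{(k)}\to H_l^{(k+1)}\to H_{l-1}^{(k)}\to 0,
\]
which immediately gives $\Ann(H_l^{(k+1)})\subseteq \Ann(H_l^{(k)})\cap\Ann(H_{l-1}^{(k)})$.

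For the second bullet, I will iterate these short exact sequences for $k=s,\dots,\mu-1$. Tracking indices through the iteration, the smallest level of a Koszul homology of $(f_1,\dots,f_k)$ that enters the decomposition of $H_j^{(\mu)}$ at stage $k$ is $l=j-(\mu-k)$. Proposition \ref{uak}(ii) provides the required annihilation precisely when $j-(\mu-k)\ge k-n+2$, which simplifies to $j\ge \mu-n+2$—exactly the standing hypothesis. Iterating the containment inclusion above then yields the easy direction $\Ann(H_j^{(\mu)})\subseteq \bigcap_{i=0}^{\mu-s}\Ann(H_{j-i}^{(s)})$. For the reverse inclusion, the plan is to lift annihilators across each extension by exploiting the Koszul DG-algebra structure together with the SD hypothesis: any $x$ in the target intersection satisfies $xH_l^{(k+1)}\subseteq H_l^{(k)}$ at every stage, and iterated application of the SD-based depth bounds from Proposition \ref{uak}(i) will force this submodule to vanish.

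For the first bullet, the case $j=0$ is trivial and the containment $I\subseteq\Ann(H_j(\ff))$ for $j\ge 1$ is standard Koszul multiplication. For the reverse containment, I intend to use the Supplement of Proposition \ref{uak} (which permutes which generators we regard as ``added'') to obtain depth estimates on $H_j(\ff)$ from the depth hypothesis $\depth(R/I)\ge d-n+1$, and then view $H_j(\ff)$ as an $R/I$-module of controlled depth. Faithfulness over $R/I$, which is equivalent to $\Ann(H_j(\ff))=I$, should follow by localizing at primes $\fp\supseteq I$ of height $\le n-1$, where the residual-intersection hypothesis $\Ht(\fa:I)\ge n$ forces $I_\fp=\fa_\fp$ and the problem reduces to the subsequence $\fa$.

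The main obstacle in both parts is passing from mere annihilator containment to equality. For Part 2 this means lifting annihilators across a short exact sequence of modules, which does not hold for arbitrary extensions and so requires genuine use of the SD/SCM structure (e.g.\ the depth inequality $\depth(Z_j(\ff))\ge d-k+j+1$ of Lemma \ref{sdc1} to kill possible extension classes). For Part 1 the delicate step is proving faithfulness of $H_j(\ff)$ as an $R/I$-module; I expect the argument will ultimately hinge on the same kind of recursive depth chase employed in the proof of Proposition \ref{uak} itself, combined with the residual-intersection hypothesis to rule out stray associated primes of $\Ann(H_j(\ff))/I$ in $R/I$.
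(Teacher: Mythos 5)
The paper explicitly omits the proof of this proposition, saying only that ``a proof is based on Proposition~\ref{uak}'', so there is nothing of the authors' to compare against; your proposal, read as a self-contained argument, has a workable core but also a genuine gap. The core you identify correctly: Proposition~\ref{uak} gives $I\subseteq\Ann(H_l(f_1,\dots,f_k))$ in the stated range, and at a prime $\fp$ with $\Ht(\fp)<n$ one has $\fp\not\supseteq(\fa:I)$, hence $I_\fp=\fa_\fp$, so $f_{s+1},\dots,f_\mu$ become redundant and $H_j(f_1,\dots,f_\mu)_\fp\cong\bigoplus_{i=0}^{\mu-s}H_{j-i}(\fa)_\fp^{\binom{\mu-s}{i}}$. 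For the first bullet you are overcomplicating: the Supplement of Proposition~\ref{uak} and depth estimates on $H_j(\ff)$ are not needed. Since $I\subseteq\Ann(H_j(\ff))$ in every degree, $\Ann(H_j(\ff))/I$ is a submodule of $R/I$, so its associated primes lie in $\Ass(R/I)$; under $\depth(R/I)\ge d-n+1$ in a CM ring these primes have height $\le n-1$. For such $\fp$ the decomposition above (with $\mu=r$) contains the summand $H_0(\fa)_\fp=(R/I)_\fp\neq 0$ because $j\le r-s$, so $\Ann(H_j(\ff))_\fp\subseteq I_\fp$ and therefore $\Ann(H_j(\ff))/I=0$.

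For the second bullet, your index bookkeeping for the inclusion $\Ann(H_j(f_1,\dots,f_\mu))\subseteq\bigcap_i\Ann(H_{j-i}(f_1,\dots,f_s))$ via the degenerate Koszul long exact sequences is sound: $j\ge\mu-n+2$ is exactly what Proposition~\ref{uak}(ii) needs at every stage $k=\mu-1,\dots,s$. The reverse inclusion is where the sketch breaks down. The phrase ``iterated application of the SD-based depth bounds from Proposition~\ref{uak}(i) will force this submodule to vanish'', together with the suggestion of Lemma~\ref{sdc1} to ``kill possible extension classes'', does not constitute an argument: from $0\to A\to B\to C\to 0$ with $xA=xC=0$ one gets only $x^2B=0$, and nothing you wrote shows the relevant extensions split. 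The fix is another localization using precisely the tools you cite. By Proposition~\ref{uak}(i), $\depth(H_j(f_1,\dots,f_\mu))\ge d-\mu+j\ge d-n+2$, so every prime in $\Ass(H_j(f_1,\dots,f_\mu))$ has height $\le n-2<n$. At such a prime the direct-sum decomposition holds, so any $x\in\bigcap_{i=0}^{\mu-s}\Ann(H_{j-i}(f_1,\dots,f_s))$ annihilates $H_j(f_1,\dots,f_\mu)_\fp$; thus $x\cdot H_j(f_1,\dots,f_\mu)$ vanishes at all of its associated primes and hence is zero. (This localization argument in fact yields both inclusions simultaneously, making the short-exact-sequence iteration dispensable.)
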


\subsection*{Acknowledgment}
The  authors would like to thank Bernd Ulrich for encouragement and  illuminating questions during his stays in Brazil in 2013 and 2014. A part of this work was done while the first author visiting the university of Utah  supported by the PDE-fellowship, 233035/2014-1, from CNPq-Brazil, he expresses his gratitude to CNPq and thanks the university of Utah for the hospitality.  Thanks to E. Tavanfar, A. Boocher and S. Iyengar  for reading the draft and correcting some important typos. We also thank the referee for his/her careful reading of the manuscript and useful comments. 


\end{document}